\newtheorem{theorem}{Theorem}[section]
\newtheorem{lemma}[theorem]{Lemma}
\newtheorem{claim}[theorem]{Claim}
\newtheorem{fact}[theorem]{Fact}
\theoremstyle{definition}
\newenvironment{fminipage}%
  {\begin{Sbox}\begin{minipage}}%
  {\end{minipage}\end{Sbox}\fbox{\TheSbox}}
\def\pleq{\preccurlyeq}
\def\defeq{\stackrel{\mathrm{def}}{=}}
\def\poly{\text{poly}}
\def\abs#1{\left|#1  \right|}
\def\norm#1{\left\| #1 \right\|}
\def\eps{\epsilon}
\newcommand\er{R_{\text{eff}}}
\DeclareMathOperator*{\argmin}{arg\,min}
\DeclareMathOperator*{\argmax}{arg\,max}
\def\tr{\text{tr}}
\def\im{\text{Im}}
\def\mss{the Marcus-Spielman-Srivastava theorem}
\title{Efficient Algorithms for Partitioning Circulant Graphs with Optimal Spectral Approximation}
\author{
    Surya Teja Gavva\thanks{Rutgers University, \href{mailto:suryateja.gavva@qc.cuny.edu}{suryateja@math.rutgers.edu}} \and 
    Peng Zhang\thanks{Rutgers University, \href{mailto:pz149@rutgers.edu}{pz149@rutgers.edu}}
}
\begin{document}

\date{} % clear date

\clearpage
\maketitle
\thispagestyle{empty}

\begin{abstract}
  The Marcus-Spielman-Srivastava theorem (Annals of Mathematics, 2015) for the Kadison-Singer conjecture implies the following result in spectral graph theory: For any undirected graph $G = (V,E)$ with a maximum edge effective resistance at most $\alpha$, there exists a partition of its edge set $E$ into $E_1 \cup E_2$ such that the two edge-induced subgraphs of $G$ spectrally approximates $(1/2)G$ with a relative error $O(\sqrt{\alpha})$. However, the proof of this theorem is non-constructive. It remains an open question whether such a partition can be found in polynomial time, even for special classes of graphs.

    In this paper, we explore polynomial-time algorithms for partitioning circulant graphs via partitioning their generators.
    We develop an efficient algorithm that partitions a circulant graph whose generators form an arithmetic progression, with an error matching that in the Marcus-Spielman-Srivastava theorem and optimal, up to a constant.
    On the other hand, we prove that if the generators of a circulant graph are ``far" from an arithmetic progression, \emph{no} partition of the generators can yield two circulant subgraphs with an error matching that in the Marcus-Spielman-Srivastava theorem.
    
    In addition, we extend our algorithm to Cayley graphs whose generators are from a product of multiple arithmetic progressions.

\end{abstract}

\newpage
\pagenumbering{gobble}
\sloppy

\newpage

\pagenumbering{arabic}

\section{Introduction}
\label{sec:intro}

The Marcus-Spielman-Srivastava theorem (henceforth, the MSS Theorem) for the Kadison-Singer conjecture implies the following result for spectral graph theory:
\begin{theorem}[\cite{MSS15, srivastava13}]
Let $G = (V,E)$ be an undirected graph. Let $\alpha$ be the maximum effective resistance between 
any two endpoints of an edge in $G$.
Then, there \emph{exists} a partition of the edge set $E$ into $E_1 \cup E_2$ such that the two edge-induced subgraphs $G_1 = (V,E_1)$ and $G_2 = (V,E_2)$ satisfy
\begin{align}
\left( \frac{1}{2} - 5\sqrt{\alpha} \right) L_G \pleq L_{G_i} \pleq \left( \frac{1}{2} + 5\sqrt{\alpha} \right)  L_G, ~ \forall i \in \{1,2\} 
\label{eqn:mss}
\end{align}
where 
$L_G, L_{G_1}, L_{G_2}$ are the Laplacian matrices of graphs $G,G_1,G_2$ and 
$\pleq$ is the Loewner order for positive semidefinite matrices.
\label{thm:mss}
\end{theorem}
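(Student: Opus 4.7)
The plan is to derive the theorem from the rank-one vector form of \mss, by lifting the random edge partition into a single sum of independent rank-one random matrices in a doubled ambient space.

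First, I would set up the standard Laplacian decomposition $L_G = \sum_{e \in E} b_e b_e^T$, where $b_e \in \mathbb{R}^V$ is the signed incidence vector of the edge $e$. Passing to $\mathrm{range}(L_G)$, set $u_e = L_G^{+/2} b_e$. Two identities are central: $\|u_e\|^2 = b_e^T L_G^{+} b_e$ equals the effective resistance of $e$, hence is at most $\alpha$; and $\sum_e u_e u_e^T$ equals the identity on $\mathrm{range}(L_G)$.

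Next, I would model the random partition $E = E_1 \cup E_2$ by independent uniform variables $\xi_e \in \{1,2\}$, one per edge. For each $e$, define a random vector $w_e$ in a two-block ambient space (two copies of $\mathrm{range}(L_G)$) by placing $\sqrt{2}\,u_e$ in the $\xi_e$-th block and $0$ in the other. By construction, $\sum_e \E[w_e w_e^T]$ equals the identity on the doubled space, and $\|w_e\|^2 = 2\|u_e\|^2 \le 2\alpha$ with probability one. I would then invoke \mss\ in this form to conclude that there exists a realization of the $\xi_e$'s with
\[
\lambda_{\max}\!\left(\sum_e w_e w_e^T\right) \le \left(1 + \sqrt{2\alpha}\right)^2.
\]
By the block-diagonal structure, the sum is block diagonal with blocks $2 L_G^{+/2} L_{G_i} L_G^{+/2}$ for $i = 1, 2$, so the bound immediately yields $L_{G_i} \pleq \tfrac{1}{2}(1+\sqrt{2\alpha})^2 L_G$ for both $i$. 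The matching lower bound $L_{G_i} \pgeq \tfrac{1}{2}(1-\sqrt{2\alpha})^2 L_G$ is automatic from $L_{G_1} + L_{G_2} = L_G$. Expanding the squares and folding the linear-in-$\alpha$ term into the leading $\sqrt{\alpha}$ term gives the stated form $(1/2 \pm 5\sqrt{\alpha}) L_G$ throughout the nontrivial regime.

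The main conceptual step is picking a random model in which one application of \mss\ simultaneously controls both $L_{G_1}$ and $L_{G_2}$; the block-doubling device is what makes this possible, since it forces the expected matrix to be exactly isotropic while keeping the per-vector norm bounded by $2\alpha$. The remaining work, restricting to $\mathrm{range}(L_G)$ on a possibly disconnected graph, inverting the $L_G^{+/2}$-conjugation, and tracking constants, is routine bookkeeping. This proof is inherently non-constructive, precisely because \mss\ is: it establishes the existence of the partition but supplies no efficient algorithm to find it, which is the open question that the rest of the paper attacks for circulant graphs.
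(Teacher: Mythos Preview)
The paper does not prove Theorem~\ref{thm:mss}; it is stated as a known consequence of \cite{MSS15}, with the graph-theoretic formulation attributed to \cite{srivastava13}, and is used only as background motivation. So there is no proof in the paper to compare against.

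Your derivation is the standard one (essentially the argument in Srivastava's survey): pass to isotropic position via $u_e = L_G^{\dagger/2} b_e$, double the ambient space so that a random two-coloring becomes a single family of independent rank-one vectors with isotropic expectation and squared norms at most $2\alpha$, invoke the rank-one MSS bound $(1+\sqrt{2\alpha})^2$, and read off the two blocks. This is correct.

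One small arithmetic slip: the lower bound obtained from $L_{G_1}+L_{G_2}=L_G$ and the upper bound $L_{G_{3-i}}\pleq \tfrac{1}{2}(1+\sqrt{2\alpha})^2 L_G$ is
\[
L_{G_i}\ \pgeq\ \Bigl(1-\tfrac{1}{2}(1+\sqrt{2\alpha})^2\Bigr)L_G \;=\; \Bigl(\tfrac{1}{2}-\sqrt{2\alpha}-\alpha\Bigr)L_G,
\]
not $\tfrac{1}{2}(1-\sqrt{2\alpha})^2 L_G = \bigl(\tfrac{1}{2}-\sqrt{2\alpha}+\alpha\bigr)L_G$. Since $\alpha\le 1$ for edge effective resistances, the extra $-\alpha$ is absorbed by the $5\sqrt{\alpha}$ slack, so your final statement is unaffected.
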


Partitioning the edge set $E$ of graph $G$ in Theorem \ref{thm:mss} uniformly at random yields 
two subgraphs $G_1$ and $G_2$ that each spectrally approximates  $(1/2)G$ with a relative error $O(\sqrt{\alpha \log n})$ with high probability \citep{tropp15}.
Theorem \ref{thm:mss} improves this error to $O(\sqrt{\alpha})$, opting to forego the ``high probability", achieving an optimal bound up to a constant.

In addition to its strong relevance to the Kadison-Singer conjecture, the task of
partitioning a graph into edge-disjoint subgraphs that preserves 
certain expansion properties have wide-ranging applications in network design \citep{BFU92,FM99}.
Furthermore, if $\alpha = O(1/d)$ where $d$ is the average vertex degree of $G$ -- a condition met by expander graphs and edge-transitive graphs,
applying Theorem \ref{thm:mss} iteratively to subgraphs $G_1$ and $G_2$ 
for $t = \Theta (\log d)$ levels results in $2^t$ spectral sparsifiers for $G$:
they are sparse subgraphs of $G$ that each spectrally approximates $(1/2^t)G$ with an error $O(1)$ \cite{srivastava13,paschalidis2024linear}.

Although Theorem \ref{thm:mss} is important, its proof is non-constructive, and it remains open whether a polynomial-time algorithm can find a partition satisfying Equation \eqref{eqn:mss} within a constant factor.
The currently best-known algorithm runs in $O(2^{\sqrt[3]{n} / \alpha})$ time, as developed by \cite{AGSS18} 
\footnote{The algorithm in \cite{AGSS18} applies to the more general form of \mss.}.
For graphs $G$ with strong expansion properties, 
\cite{FM99} developed a polynomial-time algorithm to partition \(G\) into subgraphs that maintain expansion properties within certain parameter ranges.
However, their bounds are far less tight than those in Theorem \ref{thm:mss}.
\cite{BCNPT20} designed an efficient algorithm 
to find a bounded-degree expander inside a \emph{very dense} $\Omega(\abs{V})$-regular expander.
% , where $d = \Omega(\abs{V})$.
\cite{BST19} generalized this result to any regular expanders by developing an algorithm for a weaker definition of graph sparsification with an \emph{additive} error.
As far as we are aware, we have not identified other polynomial-time algorithms for Theorem \ref{thm:mss} even when limiting to special classes of graphs.
In addition, finding an optimal partition in general is NP-hard \cite{spielman2022hardness,jourdan2023algorithmic}.

\subsection{Our Results}

We investigate efficient algorithms for Theorem \ref{thm:mss} applicable to special classes of circulant graphs, which are a special case of Cayley graphs. 
Cayley graphs play an important role in both combinatorial and geometric theory. 
Given a group \(\mathbb{G}\) and a subset $C \subset \mathbb{G}$ that is inverse-closed and excludes the identity element, 
the resulting Cayley graph \(X(\mathbb{G}, C)\) consists of vertices from \(\mathbb{G}\) and edges $(g,h)$ if \(hg^{-1} \in C\). 
The elements in \(C\) are called \emph{generators}.
A \emph{circulant graph} is a special type of Cayley graph of a cyclic group \(\mathbb{Z}_n\). 
For example, cycles, complete graphs, and Paley graphs are circulant graphs; hypercubes are Cayley graphs but not circulant graphs.

Our first result is a polynomial-time algorithm for circulant graphs whose generators
constitute an \emph{arithmetic progression}.

\begin{theorem}
Let $k,n$ be positive integers. Let $C = \{\pm a_1, \ldots, \pm a_k\} \pmod n \subseteq \mathbb{Z}_n \setminus \{0\}$
where $a_1, \ldots, a_k$ form an arithmetic progression.
Let $X = X(\mathbb{Z}_n, C)$ be the circulant graph.
Then, we can partition $C$ into symmetric \footnote{A set $S$ is \emph{symmetric} if $x \in S \iff -x \in S$.} subsets $C_1 \cup C_2$, in time polynomial in $k$, such that 
\begin{align*}
\left( \frac{1}{2} - O(1/\sqrt{k}) \right) L_{X} \pleq L_{X(\mathbb{Z}_n, C_i)} \pleq \left( \frac{1}{2} + O(1/\sqrt{k}) \right)  L_{X}, ~ \forall i \in \{1,2\}
% \label{eqn:intro_ap}
\end{align*}
\label{thm:intro_ap}
\end{theorem}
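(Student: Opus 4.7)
The plan is to reduce the spectral approximation to a signed-sum estimate via character theory, and then give an explicit partition using a Rudin--Shapiro $\pm 1$ sequence. Since $X = X(\mathbb{Z}_n, C)$ is a Cayley graph on the abelian group $\mathbb{Z}_n$, its Laplacian is diagonalized by the characters $\chi_j(x) = e^{2\pi i j x/n}$, with eigenvalues $\lambda_j = 2\sum_{l=1}^k (1 - \cos \theta_l)$ where $\theta_l = 2\pi j a_l/n$. Because $a_l = a_1 + (l-1)d$ is an AP, so is $\theta_l = \theta_1 + (l-1)\eta$ with $\eta = 2\pi j d /n$. Any symmetric partition $C = C_1 \cup C_2$ corresponds bijectively to a signing $r_l \in \{\pm 1\}$ (via $l \in I_i \iff r_l = (-1)^{i-1}$), and the required condition reduces, for every $j$ with $\lambda_j > 0$, to
\[
\Delta_j := \Bigl|\sum_{l=1}^k r_l(1 - \cos \theta_l)\Bigr| = \bigl|\hat r(0) - \mathrm{Re}(e^{i\theta_1}\hat r(\eta))\bigr| \leq \frac{c}{\sqrt k}\, \lambda_j,
\]
where $\hat r(\omega) := \sum_{l=1}^k r_l e^{i(l-1)\omega}$.

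I would take $r_1,\ldots,r_k$ to be the Rudin--Shapiro $\pm 1$ sequence (adapted to length $k$). It is computable in $O(k)$ time, and its key property is the uniform Fourier bound $\|\hat r\|_\infty \leq C_0\sqrt k$, which immediately yields the absolute bounds $\Delta_j \leq 2C_0\sqrt k$ and $\bigl||I_1|-|I_2|\bigr| \leq C_0\sqrt k$. The analysis then splits on the size of $\eta$. When $|\sin(\eta/2)| \geq 2/k$, the closed form $\lambda_j = 2k - 2\cos\tau \cdot \sin(k\eta/2)/\sin(\eta/2)$ (with $\tau = \theta_1 + (k-1)\eta/2$) forces $\lambda_j \geq k$, so the absolute bound immediately gives $\Delta_j/\lambda_j = O(1/\sqrt k)$.

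The delicate case is $|\eta| = O(1/k)$, where $\lambda_j$ can be arbitrarily small. In that regime the same closed form shows that $\lambda_j < k$ additionally forces $|\theta_1| = O(1)$ (after the appropriate $2\pi$-shift of the representative), so every $|\theta_l|$ is bounded by a constant and we may Taylor expand $1-\cos\theta_l = \sum_{p\geq 1}(-1)^{p+1}\theta_l^{2p}/(2p)!$. Expanding $(\theta_1 + (l-1)\eta)^{2p}$ binomially expresses $\Delta_j$ in terms of the moments $E_b := \sum_l r_l(l-1)^b = (-i)^b \hat r^{(b)}(0)$; Bernstein's inequality applied to the trigonometric polynomial $\hat r$ of degree $k-1$ converts the $L^\infty$ bound on $\hat r$ into $|E_b| \leq (k-1)^b \|\hat r\|_\infty \leq C_0 k^{b+1/2}$ for every $b \geq 0$. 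Summing gives $|\Delta_j| \leq C_0 \sqrt k\,(\cosh(|\theta_1|+k|\eta|)-1) = O(\sqrt k(\theta_1^2 + k^2\eta^2))$ whenever $|\theta_1|+k|\eta|$ is bounded, and a matching lower bound $\lambda_j = \Omega(k(\theta_1^2 + k^2\eta^2))$ follows by Taylor expanding the closed form at $(\theta_1,\eta) = (0,0)$; the ratio is again $O(1/\sqrt k)$.

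The main technical obstacle is this small-$\lambda_j$ regime: one must quantitatively justify that $|\theta_l|$ really is uniformly $O(1)$ in $l$ when $\lambda_j$ is small (through a careful choice of $2\pi$-representative and a quantitative version of the deduction ``$\lambda_j < k \implies \theta_1$ small''), track the absolute constants through Bernstein's inequality so the Taylor series converges uniformly, and align the implicit constants in the matching upper bound on $\Delta_j$ and lower bound on $\lambda_j$ so that they combine into the required $O(1/\sqrt k)$ relative error.
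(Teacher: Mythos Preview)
Your approach is correct and genuinely different from the paper's. Both of you reduce to the same scalar inequality and split into a ``large denominator'' regime (where $\lambda_j = \Omega(k)$) and a ``small denominator'' regime (where $\eta$ is $O(1/k)$ and the $\theta_l$ are all $O(1)$), but the choice of signs and the treatment of the small regime differ substantially. The paper does \emph{not} pick an explicit sequence: it sets up $O(k)$ discrepancy constraints (values of the signed cosine and sine sums at $7k$ equispaced angles) together with $O(\log k)$ moment constraints $\bigl|\sum_s x_s (s/k)^l\bigr| = O(\log k)$ for $0 \le l \le 38\log k$, and then invokes the Lovett--Meka partial-coloring algorithm to find signs meeting all of them. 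In the small regime the paper truncates the Taylor series at $O(\log k)$ terms and uses only these finitely many moment bounds.

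Your route is more explicit and more elementary: the Rudin--Shapiro choice gives the single global bound $\|\hat r\|_\infty = O(\sqrt{k})$, and iterating Bernstein turns this into $|E_b| \le (k-1)^b\|\hat r\|_\infty$ for \emph{all} $b$ simultaneously, so no truncation is needed and the $\cosh$ resummation closes the argument cleanly. This yields a deterministic $O(k)$-time construction, whereas the paper's algorithm is randomized and runs in a larger polynomial time. On the other hand, the paper's framework is more flexible: because Lovett--Meka can accommodate arbitrary linear constraints with tunable thresholds, it extends directly to the Cartesian and tensor products of circulant graphs treated later, and the moment bounds it obtains ($O(\log k)$ rather than your $O(k^{b+1/2})$) give much more slack, so the constant-tracking in the small regime is less delicate. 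Your own stated obstacle --- pinning down a representative so that all $|\theta_l| < \pi$ and matching the constants in the upper and lower Taylor bounds --- is real but routine: with the threshold $|\sin(\eta/2)| \ge 2/k$ one gets $D_k(\eta) > 0$ in the small regime, hence $|\tau| < \pi/3$ after the $2\pi$-shift, and then $|\theta_l| \le |\tau| + (k-1)|\eta|/2 < \pi/3 + 2 < \pi$, which is enough for $1-\cos\theta_l \ge c\,\theta_l^2$ uniformly.
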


Since the maximum effective resistance between any two endpoints of an edge in $X$ is at least \(\Omega(1/k)\), 
% where \(d\) is the average vertex degree, 
the error bound in Theorem \ref{thm:intro_ap} aligns with \mss \ (Theorem \ref{thm:mss}) within a constant factor. 
Additionally, we demonstrate that the error term \(O(1/\sqrt{k})\) is optimal within a constant factor; 
in other words, any partition of graph $X$ into two edge-induced subgraphs will necessarily result in an error term $\Omega(1/ \sqrt{k})$.
The runtime of the algorithm in Theorem \ref{thm:intro_ap} solely relies on \(k\), the number of generators, and is independent of \(n\), the number of vertices, which might be significantly greater than \(k\). 
On the other hand, verifying that a given partition satisfies Equation \eqref{eqn:mss} might require a time polynomial in terms of \(n\). 

Moreover, Theorem \ref{thm:intro_ap} can be viewed as a ``higher-rank" version of \mss, which might have additional advantages in specific applications \citep{branden18}.

Our second result is a lower bound for partitioning the generators of circulant graphs whose generators have large and irregular gaps between elements. 
For these types of graphs, different ideas will be needed to design algorithms that meet the conditions of \mss.

\begin{theorem}
There exist a constant $c > 0$, a constant $K_0 > 0$, and a function $h: \mathbb{Z}_+^{K+1} \rightarrow \mathbb{Z}_+$ such that the following holds for every integer $K > K_0$.
Given positive integers $a_1, \ldots, a_K$ satisfying 
\begin{align*}
\frac{a_k}{a_1 + \ldots + a_{k-1}} > 4 \log K, ~ \forall k = 2,\ldots,K
\end{align*}
and an integer $N > h(K,a_1,\ldots,a_K)$ such that the group $\mathbb{Z}_N$ can be generated by elements in $C = \{\pm a_1, \ldots, \pm a_K\}$,
let $X = X(\mathbb{Z}_N, C)$ the circulant graph.
Denote $\alpha$ as the maximum effective resistance between any two endpoints of an edge in $X$.
Then, \emph{no} partition of $C$ into symmetric subsets $C_1 \cup C_2$ satisfies
\[
\left( \frac{1}{2} - c \sqrt{\alpha \log K} \right) L_{X} \pleq L_{X(\mathbb{Z}_N, C_i)} \pleq 
\left( \frac{1}{2} + c \sqrt{\alpha \log K} \right)  L_{X}, ~ \forall i \in \{1,2\}     
\]
\label{thm:intro_lower}
\end{theorem}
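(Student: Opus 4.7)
My plan is to show that every symmetric partition $C = C_1 \cup C_2$ induces a large imbalance already at the first non-trivial character $\chi_1$, namely $|\lambda_1(L_{X(\mathbb{Z}_N, C_i)})/\lambda_1(L_X) - 1/2| \geq 1/4$, and then separately argue that $\alpha = O(1/K)$ so that $c\sqrt{\alpha \log K} < 1/4$ for $K > K_0$. The symmetry constraint is essential: it forces each pair $\{a_k, -a_k\}$ to lie entirely in $C_1$ or $C_2$, encoded by $\epsilon_k \in \{\pm 1\}$, which is what distinguishes this setting from the general MSS theorem (in which one may split each generator's $N$ edges arbitrarily between the two sides). Since $N = h(K, a_1, \ldots, a_K)$ can be chosen so that $a_K/N$ is arbitrarily small, the Taylor expansion $\sin^2(\pi a_k/N) = (\pi a_k/N)^2(1 + O((a_K/N)^2))$ is valid and reduces the problem at $\chi_1$ to comparing $\sum_k \epsilon_k a_k^2$ with $\sum_k a_k^2$.

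The lacunarity hypothesis $a_k > 4 \log K \cdot (a_1 + \cdots + a_{k-1})$ implies $a_i/a_K \leq (4 \log K)^{-(K-i)}$ for $i < K$, so a geometric-series estimate yields $\sum_{i<K} a_i^2 \leq a_K^2 \cdot O(1/\log^2 K)$. Hence $\sum_k a_k^2 = a_K^2(1 + O(1/\log^2 K))$ and $\sum_k \epsilon_k a_k^2 = \epsilon_K a_K^2(1 + O(1/\log^2 K))$, which together give
\[
\left| \frac{\lambda_1(L_{X(\mathbb{Z}_N, C_i)})}{\lambda_1(L_X)} - \frac{1}{2} \right| = \frac{1}{2} - O(1/\log^2 K) \geq \frac{1}{4}
\]
for $K$ larger than some $K_0$. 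Thus every symmetric partition exhibits deviation at least $1/4$ at $\chi_1$, regardless of the sign pattern on the smaller generators.

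To finish, I bound $\alpha$ from above using the spectral identity
\[
R_{\text{eff}}(0, a_k) = \frac{1}{N} \sum_{j=1}^{N-1} \frac{\sin^2(\pi j a_k/N)}{\sum_{i=1}^K \sin^2(\pi j a_i/N)}.
\]
Each summand is in $[0,1]$, but choosing $N$ sufficiently large via $h$ makes the tuples $(ja_1/N, \ldots, ja_K/N) \bmod 1$ nearly equidistributed in $[0,1]^K$ by a Weyl-type statement for lacunary sequences. Consequently, for a $1 - o(1)$ fraction of $j$'s each $\sin^2$ is $\Theta(1)$ and the summand is $\Theta(1/K)$, while the atypical $j$'s (where a single term dominates) contribute negligibly. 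This gives $\alpha = \max_k R_{\text{eff}}(0, a_k) \leq C/K$ for a universal constant $C$, so $c\sqrt{\alpha \log K} \leq c\sqrt{C \log K/K} < 1/4$ for $K > K_0(c, C)$, contradicting the claimed inequality at $\chi_1$ and hence at both $i \in \{1, 2\}$ by the relation $\lambda_1(L_{X(\mathbb{Z}_N, C_1)}) + \lambda_1(L_{X(\mathbb{Z}_N, C_2)}) = \lambda_1(L_X)$. The main obstacle is the clean $O(1/K)$ bound on $\alpha$: making the equidistribution quantitative enough, uniformly across all admissible lacunary data and especially near the boundary scales $j \approx N/a_k$ where a single summand can be unusually large, is exactly what drives the construction of the function $h$.
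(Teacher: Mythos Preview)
Your argument for the first component---looking only at the eigenvalue $\lambda_1$ corresponding to $\theta=2\pi/N$---is correct and in fact proves a stronger statement than the paper. Because lacunarity makes $a_K^2$ dominate $\sum_k a_k^2$, whichever side $C_i$ contains $\pm a_K$ has $\lambda_1(L_{X(\mathbb{Z}_N,C_i)})/\lambda_1(L_X)\approx 1$ and the other $\approx 0$; the relative deviation from $1/2$ is therefore $1/2-o(1)$, not merely $\Omega(\sqrt{(\log K)/K})$. The paper takes a different and more elaborate route: it does not single out $\theta_1$ but instead shows that for \emph{every} choice of signs the lacunary cosine sum $S_K(\theta)=\sum_k x_k\cos(a_k\theta)$ exceeds $\tfrac14\sqrt{K\log K}$ on a set of positive measure in $[0,2\pi]$, via an anti-concentration inequality applied to the $2\lfloor\log K\rfloor$-th moment, and then transfers this to the discrete set $\Theta_N$ by Bernstein's inequality. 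Your observation is shorter and yields a constant lower bound for the theorem as stated; the paper's argument gives information at generic $\theta$, which is also what drives its effective-resistance computation.

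There is, however, a real gap in your second component. The tuples $(ja_1/N,\ldots,ja_K/N)\bmod 1$ are \emph{not} nearly equidistributed in $[0,1]^K$: for fixed integers $a_1,\ldots,a_K$ they all lie on the one-dimensional subgroup of $\mathbb{T}^K$ generated by $(a_1/N,\ldots,a_K/N)$, and no Weyl-type statement can promote this to $K$-dimensional equidistribution no matter how large $N$ is or how lacunary the $a_k$ are. In particular, the assertion that ``each $\sin^2$ is $\Theta(1)$'' simultaneously for a $1-o(1)$ fraction of $j$ does not follow. What the paper actually uses is the much weaker fact that the \emph{sum} $\sum_k\cos(a_k\theta)$ is small for most $\theta$: it first passes to the limit $N\to\infty$ to replace the Riemann sum by $\frac{1}{2\pi}\int_0^{2\pi}$, and then bounds $\Pr_\theta\bigl[\sum_k\cos(a_k\theta)\ge cK\bigr]$ via Markov on the $2\lfloor\log K\rfloor$-th moment computed earlier. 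Your sketch can be repaired along these lines---indeed, already the second moment $\frac{1}{2\pi}\int_0^{2\pi}\bigl(\sum_k\cos(a_k\theta)\bigr)^2\,d\theta=K/2$ (since the cross terms vanish for distinct $a_i$) plus Chebyshev gives $\Pr[\sum_k\cos(a_k\theta)\ge cK]=O(1/K)$, and combining this with the trivial bound $f/g\le 1$ on the exceptional set yields $\alpha=O(1/K)$. But the equidistribution-in-$[0,1]^K$ heuristic is the wrong mental model and would not survive being made precise.
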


Finally, we extend our algorithm in Theorem \ref{thm:intro_ap} to Cayley graphs of finite abelian groups, denoted by $\mathbb{Z}_{n_1} \times \cdots \times \mathbb{Z}_{n_d}$, 
with their generators being elements of a product of multiple arithmetic progressions. 
% These graphs are not circulant.

\subsection{Our Approaches}
\label{sec:intro_approach}

A key property of Cayley graphs is that the eigenvalues and eigenvectors of their graph Laplacian matrices have explicit forms.
Particularly, the eigenvectors are solely determined by the group \(\mathbb{G}\) that defines the graph. 
Consider partitioning a Cayley graph \(G\) via partitioning its generators, yielding two Cayley subgraphs \(G_1\) and \(G_2\) defined over the same group \(\mathbb{G}\). 
In this scenario, the Laplacian matrices \(L_G, L_{G_1}, L_{G_2}\) can be diagonalized simultaneously. 
Consequently, Equation \eqref{eqn:mss} from Theorem \ref{thm:mss} can be expressed as
\begin{multline*}
\left( \frac{1}{2} - 5\sqrt{\alpha} \right) \lambda_j(L_G) \le \lambda_j (L_{G_i}) \le \left( \frac{1}{2} + 5\sqrt{\alpha} \right) \lambda_j (L_G), 
\\ \forall i \in \{1,2\}, j \in \{1,\ldots,|\mathbb{G}|\} 
\end{multline*}
where \(\lambda_j(\cdot)\) refers to the \(j\)th eigenvalue.

\paragraph*{Algorithm for partitioning generators of circulant graphs}
We establish a connection between partitioning generators of circulant graphs and discrepancy theory \citep{matousek99,Chazelle00}.
The eigenvalues of the Laplacian matrix of a circulant graph $X = X(\mathbb{Z}_n, \{\pm a_1, \ldots, \pm a_k\})$ are 
\[
\lambda_j = 2 \sum_{s=1}^k 1 - \cos(a_s \theta_j), ~ \forall j \in \mathbb{Z}_n
\]
where $\theta_j = \frac{2\pi j}{n}$. This leads to the following equivalent formulation for the equation in Theorem \ref{thm:intro_ap},
\begin{align}
\abs{\sum_{s=1}^k x_s \left(1 - \cos(a_s \theta_j)\right)} \le O(1/\sqrt{k}) 
\sum_{s=1}^k ( 1 - \cos(a_s \theta_j)), ~ \forall j \in \mathbb{Z}_n
\label{eqn:intro_ap_goal}
\end{align}
where $x_s = 1$ if $\pm a_s \in C_1$ and $x_s = -1$ if $\pm a_s \in C_2$ for every $s \in \{1,\ldots,k\}$.
When the graph $X$ is an expander with $n= O(k)$, ensuring the right-hand side is $\Omega(\sqrt{k})$ for every $j \neq 0$,
signs $x_1, \ldots, x_k$ satisfying Equation \eqref{eqn:intro_ap_goal} exist and can be found in polynomial time via Spencer's theorem for discrepancy minimization \citep{spencer85,bansal10,LM15}.
Nonetheless, in general, the right-hand side of Equation \eqref{eqn:intro_ap_goal} can approach $0$ as every $a_s \theta_j$ tends to $0$ (modulo $2\pi$), and thus Spencer's theorem is not directly applicable.

To tackle the issue, we consider $\theta_j$'s with large right-hand side sums and those with small ones separately.
First, we reduce the problem of partitioning the generators from \emph{any} arithmetic progression to the one consisting of \(\{\pm 1, \ldots, \pm k\}\). 
Then, we divide the set of \(\theta_j\)'s into two subsets \(\Theta_1\) and \(\Theta_2\). The set $\Theta_1$ contains all $\theta_j$'s for which the right-hand side sum is $\Omega(k)$, and $\Theta_2$ the remaining $\theta_j$'s.
For each \(\theta_j\) in \(\Theta_1\), it suffices to find signs such that the left-hand side has absolute value \(O(\sqrt{k})\). However, a direct application of Spencer's theorem may result in a discrepancy of $\Omega(\sqrt{k \log (n/k)})$, 
which could be much larger than $\sqrt{k}$ when $n$ is much greater than $k$.
To address this, we adapt Spencer's method for signing complex-valued polynomials \citep{spencer85}: 
By Bernstein's inequality for trigonometric polynomials, it suffices to require only \(O(k)\) evenly-spaced values of \(\theta\) in \( (0, 2\pi)\) to satisfy the left-hand side's absolute value being \(O(\sqrt{k})\).
For each \(\theta_j\) in \(\Theta_2\), the value of $\theta_j \pmod{2 \pi}$ is sufficiently close to $0$, 
allowing us to approximate each $1 - \cos(a_s \theta_j)$ well using its Taylor series terms up to $O(\log k)$ terms.
We show that it is sufficient to have a total of \(O(\log k)\) terms with small absolute values to meet the conditions of Equation \eqref{eqn:intro_ap_goal} for all \(\theta_j\) in \(\Theta_2\).
Lastly, we employ Lovett and Meka's partial coloring theorem for minimizing discrepancy \citep{LM15} to find the signs meeting the conditions above. 
We remark our algorithm is capable of satisfying Equation \eqref{eqn:intro_ap_goal} for every \(\theta\) in \( (0,2\pi)\), 
which is a stronger claim that might be of independent interest.

\paragraph*{Lower bound for partitioning generators of circulant graphs}

Our proof for Theorem \ref{thm:intro_lower} consists of two components: 
the first demonstrates that no partition of the set \( C \) satisfies the condition in Theorem \ref{thm:intro_lower} 
when $c \sqrt{\alpha \log K}$ is replaced with $(1/40) \sqrt{(\log K) / K}$;
the second component establishes that the maximum effective resistance \(\alpha = O(1/K)\).
For the first component, we need to show that for any chosen signs \( x_1, \ldots, x_K \) representing a partition of \( C \),
\[
\max_{j\in\{1,\ldots,n-1\}}  \frac{\abs{\sum_{k=1}^K x_k \left(1 - \cos(a_k \theta_j)\right)}}{\sum_{k=1}^K 1 - \cos(a_k \theta_j)} > \frac{1}{20} \sqrt{\frac{\log K}{K}}.
\]
Given that the denominator in the equation is bounded above by \(2K\) for every $\theta_j$, it is sufficient to prove that the maximum of the numerator is at least \( (1/10) \sqrt{K \log K} \). 

We break down this numerator into two parts: the sum of the chosen signs $\sum_{k=1}^K x_k$ 
and the signed sum of the cosine functions $S_K(\theta_j) \defeq \sum_{k=1}^K x_k \cos(a_k \theta_j)$. 
Assume, by contradiction, there exist signs $x_1, \ldots, x_K$ that violate the above equation.
Then, the sum of these signs must have an absolute value no greater than \( (1/20) \sqrt{K \log K} \).
Our goal then becomes to prove \( \max_{j} \abs{S_K(\theta_j)} > (1/5) \sqrt{K \log K} \), thereby creating a contradiction. 
We extend the search for the maximum value of \( S_K(\theta) \) from the discrete set \(\{\frac{2\pi j}{N}: j=1,\ldots, N-1\}\) to the continuous interval \((0,2\pi)\) using Bernstein's inequality.
The \( S_K(\theta) \) where $a_1, \ldots, a_K$ have large gaps is an example of \emph{lacunary trigonometric polynomials}. 
The limit distribution of $S_K(\theta)$ when  $\theta$ is drawn uniformly from $(0,2\pi)$ and $K$ goes to infinity has been extensively studied \citep{PZ30,PZ30b,kac39,SZ47}.
However, these findings do not yield our desired bound for $S_K(\theta)$, which depends on \(K\).
To address this issue, we prove that $\abs{S_K(\theta)}$ is at least \((1/4) \sqrt{K \log K}\) with non-negligible probability when $\theta$ is drawn uniformly from $(0,2\pi)$, 
by employing an anti-concentration inequality on the \(2\lfloor \log K \rfloor\)th power of $S_K(\theta)$.

\subsection{Related Works}

\paragraph*{Weighted graph sparsifiers}

As discussed at the beginning of Section \ref{sec:intro}, 
applying \mss \  iteratively constructs \( O(1) \)-factor 
\emph{unweighted} spectral sparsifiers for \( (1/2^t) G \).

Efficient algorithms exist for constructing good spectral sparsifiers when \emph{reweighing edge weights} is allowed \citep{SS08,BSS09,ALO15,SL15, SL17}:
given any graph \( G = (V, E) \) and \( \eps > 0 \), the algorithms produce an edge-induced subgraph \( H = (V, F, w) \) where \( w: F \rightarrow \mathbb{R}_+ \), and the number of edges \( \abs{F} = O(\eps^{-2} \abs{V}) \) such that 
\[
(1 - \eps) L_G \pleq L_H \pleq (1 + \eps) L_G.
\]
However, it is unclear whether the approaches for generating weighted spectral sparsifiers can be transformed to create unweighted ones.

\paragraph*{Discrepancy minimization} 

Combinatorial discrepancy theory focuses on dividing a collection of elements --- such as numbers, vectors, or matrices --- into subsets that are as ``balanced'' as possible, improving on a uniformly random partition. 
Our work on partitioning the generators of circulant graphs reduces to partitioning a set of (potentially very high-dimensional) vectors to minimize a ``relative'' $\ell_{\infty}$ discrepancy.

A landmark result in this field is Spencer's theorem \citep{spencer85}, which asserts that given vectors \( v_1, \ldots, v_n \in [-1,1]^m \), one can find signs \( x_1, \ldots, x_n \in \{-1, 1\} \) such that \( \norm{\sum_{i=1}^n x_i v_i}_{\infty} = O(\sqrt{n \log (m/n+1)}) \). 
Although Spencer's original proof was non-constructive, subsequent research has yielded efficient algorithms that achieve the bounds set by this theorem \citep{bansal10,HSS14,LM15,rothvoss17,ES18,levy2017deterministic,bansal2022unified,pesenti2023discrepancy,JSS23}.
Better discrepancy bounds are known for vectors with additional structures,
for example, when the rows of the matrix \( \begin{pmatrix} v_1 & v_2 & \cdots & v_n \end{pmatrix} \) represent all the arithmetic progressions within \( \{1, \ldots, n\} \) \citep{roth1964remark,matouvsek1996discrepancy}
\footnote{This arithmetic progression structure is different from our problem setting.}.

\paragraph*{Trigonometric series} 

Littlewood investigated polynomials of the form \( p(z) = \sum_{i=0}^n \eps_i z^i \) where \( \eps_i \in \{\pm 1\} \) for all \( i = 0, \ldots, n \) and $z \in \mathbb{C}$. 
He asked whether there exist constants \( c_1, c_2 \) such that there exist \( \eps_1, \ldots, \eps_n \) meeting the criteria
$c_1 \sqrt{n+1} \le \max_{\abs{z} = 1} \abs{p(z)} \le c_2 \sqrt{n+1}$.
\cite{rudin59} and \cite{shapiro52} discovered \( \eps_1, \ldots, \eps_n \) that satisfy the upper bound with \( c_2 = \sqrt{2} \). 
Recently, \cite{BBMST20} confirmed Littlewood's query for both upper and lower bounds.
By Euler's formula, a polynomial $p(z)$ for complex $z$ can be written as a linear combination of $\cos(ix)$ and $\sin(ix)$ for $x \in \mathbb{R}$.
The study of trigonometric polynomials with randomly signed terms as the degree $n$ goes to infinity has been explored by \cite{SZ54}. Other works have also examined the convergence and limiting distributions of trigonometric series \citep{kac39,SZ47,erdos62,berkes78,fukuyama11}.

Our method for partitioning circulant graphs introduces a different question regarding signing trigonometric series, as outlined in Equation \eqref{eqn:intro_ap_goal}. 
This question could be interesting in its own right. 
There are two main differences between our question and prior research: 
(1) we focus on achieving a small \emph{relative} error rather than an absolute error,
and (2) the trigonometric series we consider, \( \sum_{s=1}^k x_s \cos(a_s \theta) \), allows terms with arbitrarily large degree $a_s$.

\paragraph*{Organization of the paper}
Section \ref{sec:prelim} outlines key notations and definitions.
Section \ref{sec:circ_ap} presents a polynomial-time algorithm for partitioning circulant graphs with arithmetic progression generators, proving Theorem \ref{thm:intro_ap}. 
Section \ref{sec:lower} establishes a lower bound for partitioning generators of circulant graphs and proves Theorem \ref{thm:intro_lower}. 
Section \ref{sec:multi_circ} extends the algorithm in Theorem \ref{thm:intro_ap} to Cayley graphs whose generators are from products of multiple arithmetic progressions.

\section{Preliminaries}
\label{sec:prelim}

A matrix $A \in \mathbb{R}^{n \times n}$ is symmetric if $A = A^\top$ where $A^\top$ is the transpose of $A$.
A symmetric matrix $A$ is \emph{positive semidefinite} if $x^\top A x \ge 0$ for every $x \in \mathbb{R}^n$.
Let $B \in \mathbb{R}^{n \times n}$ be another symmetric matrix.
We say $A \pleq B$ if $B - A$ is positive semidefinite.
Every symmetric matrix $A$ admits a decomposition $A = \sum_{i=1}^r \lambda_i v_i v_i^\top$ where 
$\lambda_i$'s are the nonzero eigenvalues of $A$ and $v_i$'s are the corresponding orthonormal eigenvectors.
The \emph{pseudoinverse} of such a symmetric matrix $A$, denoted by $A^{\dagger}$, 
is $A^{\dagger} = \sum_{i=1}^r \lambda_i^{-1} v_i v_i^\top$.

For any positive integer $a$, let $[a] \defeq \{1,\ldots,a\}$. For any set $A \subseteq \mathbb{R}$, 
let $\pm A \defeq A \cup \{-a: a \in A\}$.

\subsection{Laplacian Matrices and Effective Resistances}

Given a graph $G = (V,E)$, the \emph{Laplacian matrix} of $G$, denoted by $L_G$, 
is defined as 
\[
L_G \defeq \sum_{(u,v) \in E} (\chi_u - \chi_v)  (\chi_u - \chi_v)^\top,
\]
where $\chi_u \in \mathbb{R}^{V}$ with value $1$ for entry $u$ and value $0$ everywhere else.
By definition, a Laplacian matrix is symmetric and positive semidefinite.
For any two vertices $u,v \in V$, the \emph{effective resistance} between $u,v$ in $G$, denoted by $\er(u,v)$,
is defined as 
\[
  \er(u,v) \defeq (\chi_u - \chi_v)^\top L_G^{\dagger} (\chi_u - \chi_v).  
  % \label{eqn:er_def}
\]
Treating $G$ as an electrical network where each edge is a resistor with resistance being the inverse of its weight,
$\er(u,v)$ can be interpreted as the effective resistance between $u$ and $v$ in the electrical network.
We include the proofs of the following facts in \ref{sec:appendix_prelim}.

\begin{fact}
Let $G = (V,E)$ be an unweighted graph with $n$ vertices and of average degree $d$. Then, 
\[
\max_{(u,v) \in E} \er(u,v) \ge \frac{2}{d} \left( 1 - \frac{1}{n}
\right).
\]
\label{fac:er_lower}
\end{fact}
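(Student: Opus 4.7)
\textbf{Proof plan for Fact~\ref{fac:er_lower}.} The plan is a standard averaging argument built on Foster's identity, $\sum_{(u,v)\in E}\er(u,v)=\rank(L_G)$. First I would derive this identity; then I would compare the maximum to the average to extract the desired lower bound.

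For the identity, I plug the definition of $\er(u,v)$ into the sum, use the cyclic invariance of trace, and recognize $L_G=\sum_{(u,v)\in E}(\chi_u-\chi_v)(\chi_u-\chi_v)^\top$:
\begin{align*}
\sum_{(u,v) \in E} \er(u,v)
&= \sum_{(u,v) \in E} \tr\bigl(L_G^{\dagger}(\chi_u-\chi_v)(\chi_u-\chi_v)^\top\bigr) \\
&= \tr\bigl(L_G^{\dagger} L_G\bigr) \;=\; \rank(L_G).
\end{align*}
The last equality uses the spectral decomposition $L_G=\sum_i\lambda_iv_iv_i^\top$ over nonzero eigenvalues, so that $L_G^{\dagger}L_G=\sum_iv_iv_i^\top$ is the orthogonal projection onto the column span of $L_G$, whose trace equals its rank.

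Assuming $G$ is connected (the natural setting; otherwise the bound can fail, e.g., on two disjoint triangles with $n=6$, where $(2/d)(1-1/n)=5/6$ while every edge has effective resistance $2/3$), we have $\rank(L_G)=n-1$. Since $|E|=nd/2$, the maximum is at least the average:
\[
\max_{(u,v)\in E}\er(u,v) \;\ge\; \frac{1}{|E|}\sum_{(u,v)\in E}\er(u,v) \;=\; \frac{n-1}{nd/2} \;=\; \frac{2}{d}\left(1-\frac{1}{n}\right).
\]

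The only step with any real content is the Foster-style trace identity, which is classical, so I do not expect any real obstacle. As a sanity check, the bound is tight on the path with $n$ vertices, where every edge has effective resistance $1$, $d=2(n-1)/n$, and $(2/d)(1-1/n)=1$.
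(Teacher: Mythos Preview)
Your proof is correct and follows exactly the same route as the paper: sum the effective resistances over edges, rewrite the sum as $\tr(L_G^{\dagger}L_G)=n-1$, and compare the maximum to the average over $|E|=nd/2$ edges. Your version is in fact slightly more careful than the paper's, since you make the connectivity assumption explicit (the paper just writes $n-1$ without comment) and supply a counterexample showing it is needed.
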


\begin{fact}
Let $G$ be a $d$-regular graph. Then, \emph{no} partition of $G$ into two edge-induced subgraphs, denoted by $G_1$ and $G_2$, such that 
\[
    -\frac{1}{2\sqrt{d}} L_G \pleq  L_{G_1} - L_{G_2} \pleq \frac{1}{2\sqrt{d}} L_G.
\]
\label{fac:lower}
\end{fact}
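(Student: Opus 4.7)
The plan is to derive a contradiction by sandwiching $\|M\|_F^2$ between two incompatible bounds, where $M \defeq L_{G_1} - L_{G_2}$. The lower bound will come from counting edges via the entries of $M$; the upper bound will come from the hypothesized PSD sandwich together with the fact that $M$ has a nontrivial kernel.

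For the lower bound, I would write the entries of $M$ directly. The off-diagonal entry $M_{uv}$ is $\pm 1$ for every edge $(u,v) \in E$ (with sign determined by whether $(u,v) \in E_1$ or $E_2$) and zero otherwise, while the diagonal entry equals $d_1(u) - d_2(u)$, where $d_i(u)$ is the degree of $u$ in $G_i$. Summing squared entries,
\[
\|M\|_F^2 \;=\; \sum_u (d_1(u)-d_2(u))^2 + 2|E| \;\ge\; 2|E| \;=\; nd,
\]
using $|E| = nd/2$ for a $d$-regular graph.

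For the matching upper bound, I would first translate the PSD hypothesis into an operator-norm estimate. For any unit vector $x$,
\[
|x^\top M x| \;\le\; \tfrac{1}{2\sqrt{d}}\, x^\top L_G x \;\le\; \tfrac{1}{2\sqrt{d}}\,\lambda_{\max}(L_G) \;\le\; \sqrt{d},
\]
since $\lambda_{\max}(L_G) \le 2d$ for every $d$-regular graph. So every eigenvalue of $M$ has magnitude at most $\sqrt{d}$. The crucial additional observation is that $M \one = (L_{G_1} - L_{G_2}) \one = 0$, so $M$ carries at least one zero eigenvalue. The remaining $n-1$ eigenvalues contribute at most $d$ apiece, giving
\[
\|M\|_F^2 \;=\; \sum_i \lambda_i(M)^2 \;\le\; (n-1)\,d.
\]

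Combining the two bounds yields $nd \le (n-1)d$, an impossibility for $d \ge 1$. I do not anticipate technical obstacles; the only subtle point is recognizing that the constant $\tfrac{1}{2\sqrt{d}}$ is exactly at the boundary where the two bounds would merely touch ($nd \le nd$), so the argument has to extract the extra factor from somewhere. That ``somewhere'' is the deterministic zero mode $M\one = 0$, which is the load-bearing observation; everything else is routine bookkeeping.
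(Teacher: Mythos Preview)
Your proof is correct and uses the same core as the paper---the Frobenius lower bound $\|M\|_F^2 \ge nd$ together with the spectral bound coming from $\lambda_{\max}(L_G) \le 2d$---just arranged contrapositively. Your explicit use of the zero mode $M\one = 0$ to cut the upper bound from $nd$ to $(n-1)d$ is a nice touch; the paper's version, which lower-bounds $\|M\|_2$ and exhibits a witnessing vector, only reaches the non-strict inequality $|x^\top M x| \ge \tfrac{1}{2\sqrt d}\, x^\top L_G x$, so your argument actually closes that small gap.
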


\subsection{Circulant Graphs}

Let $\mathbb{Z}_n$ denote the additive group of integers modulo $n$, and let $C \subset \mathbb{Z}_n \setminus \{0\}$.
A \emph{circulant graph}, denoted $X = X(\mathbb{Z}_n, C)$, is a graph where the vertex set is $\mathbb{Z}_n$
and $(i,j)$ is an arc of $X$ if $i-j \in C$.
If $C$ is \emph{symmetric} (that is, $x \in C$ if and only if $-x \in C$), then the circulant graph $X(\mathbb{Z}_n, C)$ is \emph{undirected}.
The elements in $C$ are called the \emph{generators} of $X$. 
Examples of circulant graphs include cycles, complete graphs, and Paley graphs. A circulant graph \(X(\mathbb{Z}_n, C)\) is \(\abs{C}\)-regular by definition.

Eigenvectors and eigenvalues of the Laplacian matrix of a circulant graph have explicit forms. In particular, the eigenvectors only depend on the number of vertices in the graph.

\begin{fact}
Let $X = X(\mathbb{Z}_n, \{\pm a_1, \ldots, \pm a_k\})$ be a circulant graph. Then, the eigenvalues and the corresponding orthonormal eigenvectors of the Laplacian matrix $L_X$ of $X$ are
\begin{align*}
\lambda_j = 2 \sum_{s=1}^k \left( 1 - \cos(a_s \frac{2\pi j}{n}) \right), ~
v_j = \frac{1}{\sqrt{n}} \left( 1, \omega^j, \omega^{2j}, \ldots, \omega^{(n-1)j}
\right)^\top, ~
\forall j \in \mathbb{Z}_n
\end{align*}
where $\omega = \exp(\frac{2\pi i}{n})$ and $i = \sqrt{-1}$.
\label{fac:circ_eigs}
\end{fact}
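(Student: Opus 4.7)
The plan is to exploit the fact that the adjacency matrix of a circulant graph is itself a circulant matrix, hence diagonalized by the discrete Fourier basis. First, I would observe that vertex $i$ is adjacent to exactly the $2k$ vertices $\{i \pm a_s \pmod{n} : s \in [k]\}$, so $X$ is $2k$-regular and $L_X = 2k\, I - A_X$, where $A_X$ is the adjacency matrix of $X$. Letting $P$ denote the $n\times n$ cyclic shift matrix (with $(Px)_i = x_{(i+1) \bmod n}$), we have $A_X = \sum_{s=1}^k (P^{a_s} + P^{-a_s})$, since $P^t$ implements shifting indices by $t$.

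Second, I would invoke the standard Fourier diagonalization of $P$: the vectors $v_j = \frac{1}{\sqrt n}(1,\omega^j,\omega^{2j},\ldots,\omega^{(n-1)j})^\top$, with $\omega=\exp(2\pi i/n)$ and $j\in \mathbb{Z}_n$, form an orthonormal basis of $\mathbb{C}^n$, and a direct entrywise computation gives $P v_j = \omega^{j} v_j$. Consequently, $P^{a_s} v_j = \omega^{a_s j} v_j$ and $P^{-a_s} v_j = \omega^{-a_s j} v_j$, so by Euler's formula $A_X v_j = \sum_{s=1}^k (\omega^{a_s j} + \omega^{-a_s j}) v_j = 2 \sum_{s=1}^k \cos(2\pi a_s j/n)\, v_j$. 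Subtracting from $2k\, I$ gives $L_X v_j = \lambda_j v_j$ with $\lambda_j = 2\sum_{s=1}^k (1 - \cos(2\pi a_s j/n))$, matching the claim.

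The only mild subtlety is that the $v_j$ are complex while $L_X$ is real and symmetric. This is not an obstacle: $L_X$ is also Hermitian when viewed as an operator on $\mathbb{C}^n$ and therefore admits the $v_j$ as a unitary eigenbasis. (If a real orthonormal eigenbasis is preferred, one can pair conjugates $v_j$ and $v_{n-j}$ to obtain real sine- and cosine-type eigenvectors spanning the shared eigenspace for $\lambda_j = \lambda_{n-j}$.) Beyond this minor point, the argument is just a direct application of Fourier diagonalization of circulant matrices, so there is no genuine obstacle.
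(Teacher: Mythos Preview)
Your proposal is correct and takes essentially the same approach as the paper: both verify directly that the Fourier vectors $v_j$ are eigenvectors of $L_X$ via the circulant structure. The paper's proof is simply the one-line entrywise computation $(L_X v_j)_k = \frac{1}{\sqrt{n}}\bigl(|C|\,\omega^{kj} - \sum_{s\in C}\omega^{(k+s)j}\bigr) = \lambda_j (v_j)_k$, which is exactly your shift-matrix argument with $P$ left implicit.
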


\section{An Efficient Algorithm for Partitioning Circulant Graphs 
Whose Generators Form an Arithmetic Progression}
\label{sec:circ_ap}

In this section, we describe an efficient algorithm for partitioning a circulant graph 
whose generators constitute an arithmetic progression, and we prove Theorem \ref{thm:intro_ap}.
By Fact \ref{fac:lower}, the error term $O(1/\sqrt{k})$ in Theorem \ref{thm:intro_ap} is optimal up to a constant.

\begin{theorem}[Restatement of Theorem \ref{thm:intro_ap}]
\label{thm:circ_ap}
Let $n,b,k \in \mathbb{Z}_{>0}$ and $a \in \mathbb{Z}_{\ge 0}$. 
Let 
\begin{align*}
C =  \left\{ \pm (a+b), \pm (a+2b), \ldots, \pm (a+kb)
\right\} \hspace*{-.3cm} \pmod n \subseteq \mathbb{Z}_n \setminus \{0\}.
% \label{eqn:ap}  
\end{align*}
Let $X = X(\mathbb{Z}_n, C)$ be the circulant graph. 
Then, we can partition $C$ into two symmetric subsets $C_1 \cup C_2$, in time polynomial in $k$, such that 
\[
-O(\frac{1}{\sqrt{k}}) L_{X} \pleq
L_{X(\mathbb{Z}_n,C_1)} - L_{X(\mathbb{Z}_n,C_2)} \pleq O(\frac{1}{\sqrt{k}}) L_{X}.
\]
\end{theorem}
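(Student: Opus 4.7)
The plan is to recast the Loewner inequality as a uniform discrepancy bound on a one-parameter family of trigonometric polynomials. By Fact \ref{fac:circ_eigs}, the Laplacians $L_X, L_{X(\mathbb{Z}_n,C_1)}, L_{X(\mathbb{Z}_n,C_2)}$ share the same eigenbasis, so letting $x_s \in \{\pm 1\}$ indicate the side to which $\pm(a+sb)$ is assigned, the conclusion is equivalent to
\[
\Bigl| \sum_{s=1}^k x_s \bigl(1-\cos((a+sb)\theta_j)\bigr) \Bigr| \;\le\; O(1/\sqrt{k}) \sum_{s=1}^k \bigl(1-\cos((a+sb)\theta_j)\bigr), \quad \forall j \in \mathbb{Z}_n\setminus\{0\},
\]
with $\theta_j = 2\pi j/n$. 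I would first reduce to the canonical generator set $\{\pm 1,\ldots,\pm k\}$: writing $\phi = b\theta$ and expanding $\cos((a+sb)\theta)=\cos(a\theta)\cos(s\phi)-\sin(a\theta)\sin(s\phi)$ breaks both sides into fixed-in-$s$ combinations of $\sum_s x_s$, $\sum_s x_s\cos(s\phi)$, $\sum_s x_s\sin(s\phi)$ and their unsigned analogues, so it suffices to find signs satisfying the stronger \emph{continuous} bound $|N(\phi)| \le O(1/\sqrt{k})\,D(\phi)$ for every $\phi\in(0,2\pi)$, where $N(\phi):=\sum_{s=1}^k x_s(1-\cos(s\phi))$ and $D(\phi):=\sum_{s=1}^k(1-\cos(s\phi))$, together with the auxiliary bounds $|\sum_s x_s|,|\sum_s x_s\cos(s\phi)|,|\sum_s x_s\sin(s\phi)|=O(\sqrt{k})$ uniformly in $\phi$.

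Using $1-\cos=2\sin^2(\cdot/2)$, a short computation shows $D(\phi)=\Theta(k)$ once $|\phi|\bmod 2\pi$ is bounded below by a constant multiple of $1/\sqrt{k}$, and $D(\phi)=\Theta(k^3\phi^2)$ for $|\phi|=O(1/\sqrt{k})$. Call the former region $\Theta_{\mathrm{far}}$ and its complement $\Theta_{\mathrm{near}}$. On $\Theta_{\mathrm{far}}$ it suffices to arrange $|N(\phi)|=O(\sqrt{k})$. A direct Spencer bound across all $n$ eigenvalue angles would give an unwanted $\sqrt{\log(n/k)}$ factor, so I would instead discretize $\Theta_{\mathrm{far}}$ to an equispaced grid of $M=O(k)$ points and invoke Bernstein's inequality $\|N'\|_\infty\le k\|N\|_\infty$ for the degree-$k$ trigonometric polynomial $N$, upgrading control on the grid to uniform control on $\Theta_{\mathrm{far}}$ at a constant-factor cost. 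The resulting discrepancy instance has $O(k)$ constraints of unit magnitude on $k$ sign variables, and the Lovett--Meka algorithm produces signs with $\ell_\infty$ error $O(\sqrt{k})$ in polynomial time.

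On $\Theta_{\mathrm{near}}$, where $|\phi|=O(1/\sqrt{k})$ but $|s\phi|$ can still be as large as $O(\sqrt{k})$, I would Taylor-expand
\[
1-\cos(s\phi) \;=\; \sum_{\ell=1}^{L}\frac{(-1)^{\ell+1}(s\phi)^{2\ell}}{(2\ell)!} \;+\; \mathrm{Err}_L(s,\phi),
\]
and pick $L=O(\log k)$ so that $|\mathrm{Err}_L(s,\phi)|\le(s\phi)^{2L+2}/(2L+2)!$ is a $1/\sqrt{k}$-fraction of the leading ($\ell=1$) term, uniformly in $s\le k$ and $\phi\in\Theta_{\mathrm{near}}$ (the factorial beats the geometric growth of $(s\phi)^{2\ell}$ once $2\ell\gtrsim\log k$). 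After substituting the truncation, it suffices to control the $L$ scalar moment discrepancies $|\sum_s x_s s^{2\ell}|\le O(k^{-1/2})\sum_s s^{2\ell}$ for $\ell=1,\ldots,L$, plus the three auxiliary shift-related constraints. Adding these $O(\log k)$ moment constraints to the $O(k)$ grid constraints from $\Theta_{\mathrm{far}}$ and invoking Lovett--Meka once yields signs satisfying all the bounds simultaneously.

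The main obstacle is $\Theta_{\mathrm{near}}$: both $N$ and $D$ vanish as $\phi\to 0$, but at different polynomial rates, so the ratio $|N|/D$ need not be small just because $|N|$ is. The Taylor-truncation idea converts this ``vanishing-denominator'' pathology into a small, fixed family of moment constraints whose satisfiability lies comfortably in Spencer's range; verifying that $L=O(\log k)$ truly suffices uniformly on $\Theta_{\mathrm{near}}$ -- not just at the leading-order term -- is the key technical estimate. A secondary subtlety is that the reduction from general $(a,b)$ to $(0,1)$ genuinely uses the stronger all-$\phi$ formulation, since the shift $a$ effectively evaluates $N$ and $D$ at angles that need not lie on the original eigenvalue grid.
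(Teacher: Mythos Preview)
Your overall architecture matches the paper's: reduce to eigenvalue inequalities, split the angle range into ``far'' (denominator $\Theta(k)$, handle via Bernstein on an $O(k)$-grid) and ``near'' (handle via Taylor truncation and moment constraints), and feed the combined $O(k)$ constraints into Lovett--Meka. But the threshold you place between the two regimes is wrong, and this breaks the near-region argument as you state it.

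You claim $D(\phi)=\Theta(k^3\phi^2)$ for $|\phi|=O(1/\sqrt{k})$. In fact $D(\phi)=\Theta(k^3\phi^2)$ only while \emph{every} $s\phi$ stays below a constant, i.e.\ for $|\phi|\le c/k$; once $|\phi|\gtrsim 1/k$ one already has $D(\phi)=\Theta(k)$ (and $k^3\phi^2$ would exceed the trivial upper bound $2k$ well before $\phi$ reaches $1/\sqrt{k}$). The paper accordingly sets the split at $\alpha/k$ with $\alpha=0.9$, so that on $\Theta_2$ one has $|s\hat\theta|\le\alpha<1$ for every $s\le k$. With your split at $c/\sqrt{k}$, the ``near'' region allows $|s\phi|$ as large as $\Theta(\sqrt{k})$, and then your Taylor error estimate fails: for $|s\phi|\sim\sqrt{k}$ the remainder $(s\phi)^{2L+2}/(2L+2)!$ is controlled only once $2L\gtrsim e|s\phi|\sim\sqrt{k}$, not once $2L\gtrsim\log k$. (Concretely, $k^{L}/(2L+2)!$ is not small for $L=O(\log k)$.) The statement ``the factorial beats the geometric growth of $(s\phi)^{2\ell}$ once $2\ell\gtrsim\log k$'' is simply false in this regime. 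The fix is to move the boundary to $c/k$ with $c<1$; then $|s\phi|<1$ on $\Theta_{\mathrm{near}}$, the Taylor terms decay geometrically in $\ell$, and $L=\Theta(\log k)$ genuinely suffices---exactly as in the paper's Lemma~\ref{lem:circ_cond_small_den}.

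Two smaller points. First, you only list even moment constraints $\sum_s x_s s^{2\ell}$, but the decomposition through $\sin(a\theta)\sum_s x_s\sin(s\phi)$ (your $\beta_2$) forces the \emph{odd} moments as well; the paper imposes $|\sum_s x_s(s/k)^l|=O(\log k)$ for all $0\le l\le L$, achieved by giving these $O(\log k)$ constraints threshold $0$ in Lovett--Meka. Second, your reduction from general $(a,b)$ to the $\phi$-problem glosses over a real subtlety: after writing the numerator and denominator as $\cos(a\theta)\beta_1+\sin(a\theta)\beta_2+(1-\cos(a\theta))\beta_3$ and its unsigned counterpart, bounding the ratio by $O(1/\sqrt{k})$ does \emph{not} follow immediately from $|\beta_j|\le O(1/\sqrt{k})|\beta_j'|$, since the three unsigned pieces can partially cancel. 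The paper handles this in Claim~\ref{clm:circ_small_den_goal} with a short but necessary computation.
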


Without loss of generality, we assume in Theorem \ref{thm:circ_ap} the elements in \(C\) generate the group \(\mathbb{Z}_n\), that is, graph $X$ is connected. Otherwise, we can deal with each connected component of $X$ separately.

To prove Theorem \ref{thm:circ_ap}, as we discussed in Section \ref{sec:intro_approach}, 
it suffices to find $x_1, \ldots, x_k \in \{\pm 1\}$ satisfying
\begin{align}
\max_{\theta \in \Theta} \abs{\frac{\sum_{s=1}^k x_s \left( 1 - \cos( (a+sb)\theta)
\right)}{\sum_{s=1}^k  1 - \cos( (a+sb) \theta)}}
= O(\frac{1}{\sqrt{k}}),
\label{eqn:circ_ap_goal}
\end{align}
where $\Theta = \left\{ \frac{2\pi j}{n}: j = 1,\ldots,n-1 \right\}$. 
We split $\Theta$ into two subsets: Let $\alpha = 0.9$,
\begin{align}
\begin{split}
& \Theta_1 \defeq \left\{\theta \in \Theta: b \theta \in \left(\frac{\alpha}{k}, 2\pi - \frac{\alpha}{k} \right) + 2 \pi l, \text{ for some integer } l \right\} \\
& \Theta_2 \defeq \Theta \setminus \Theta_1.
\end{split}
\label{eqn:theta1_def}
\end{align}
We will deal with the two subsets separately to derive sufficient conditions for the signs $x_1, \ldots, x_k \in \{\pm 1\}$
such that Equation \eqref{eqn:circ_ap_goal} holds and signs satisfying these conditions can be 
constructed efficiently.

\subsection{Sufficient Conditions for Set $\Theta_1$}

We derive sufficient conditions for the signs $x_1, \ldots, x_k$ such that Equation \eqref{eqn:circ_ap_goal}
holds for every $\theta \in \Theta_1$.

The denominator in the left-hand side of Equation \eqref{eqn:circ_ap_goal} is $k - \sum_{s=1}^k \cos((a+sb)\theta) \ge 0$.
We exploit the facts: (1) $\cos((a+sb) \theta)$ equals the real part of $e^{i(a+sb)\theta}$ where $i = \sqrt{-1}$; 
and (2) $\sum_{s=1}^k e^{s(i b\theta)}$ is the sum of a geometric sequence.
\begin{multline}
\sum_{s=1}^k \cos((a+sb) \theta)
\le \abs{ \sum_{s=1}^k e^{i(a+sb)\theta}} 
\le \abs{ \sum_{s=1}^k e^{isb\theta}} \\
\le \sqrt{ \left( \sum_{s=1}^k \cos(sb\theta) \right)^2 + \left( \sum_{s=1}^k \sin(sb\theta) \right)^2 }.
\label{eqn:fkt_def}
\end{multline}

The following lemma lower bounds the denominator in Equation \eqref{eqn:circ_ap_goal}. We defer its proof to  \ref{sec:appendix_circ_ap}. 

\begin{lemma}
For every $\theta \in \Theta_1$, $\sum_{s=1}^k 1 - \cos((a+sb) \theta) \ge \frac{\alpha^2 k}{96} = \Omega(k)$.
\label{lem:circ_mid_den}
\end{lemma}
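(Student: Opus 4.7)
The plan is to lower-bound the sum by $k - |D_k(b\theta)|$ where $D_k(x) \defeq \sum_{s=1}^k e^{isx}$ (as already appears in Equation~\eqref{eqn:fkt_def}) and then argue that the Dirichlet-type kernel $|D_k(x)| = |\sin(kx/2)|/|\sin(x/2)|$ is bounded above by $(1-\alpha^2/96)k$ whenever $x \bmod 2\pi \in (\alpha/k, 2\pi - \alpha/k)$. Writing $\phi = (b\theta/2) \bmod \pi$, the hypothesis $\theta \in \Theta_1$ becomes $\phi \in (\alpha/(2k), \pi - \alpha/(2k))$, and because both $|\sin(k\phi)|$ and $\sin \phi$ are invariant under $\phi \mapsto \pi - \phi$ it suffices to treat $\phi \in (\alpha/(2k), \pi/2]$.

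I would then split this interval at $\pi/(2k)$ into two complementary subcases. In the large-$\phi$ subcase $\phi \in [\pi/(2k), \pi/2]$, the trivial bound $|\sin(k\phi)| \le 1$ together with $\sin \phi \ge \sin(\pi/(2k))$ reduces everything to the inequality $1/\sin(\pi/(2k)) \le (1 - \alpha^2/96) k$, which follows for $k \ge 2$ from the routine Taylor estimate $\sin u \ge u - u^3/6$.

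In the small-$\phi$ subcase $\phi \in (\alpha/(2k), \pi/(2k))$, every argument $(k-1-2j) \phi$ lies in $[-\pi/2, \pi/2]$, so the Chebyshev-type identity
\[
\frac{\sin(k \phi)}{\sin \phi} \;=\; \sum_{j=0}^{k-1} \cos((k-1-2j) \phi)
\]
combined with the elementary bound $\cos u \le 1 - 2u^2/\pi^2$ on $[-\pi/2, \pi/2]$ and the closed form $\sum_{j=0}^{k-1}(k-1-2j)^2 = k(k^2-1)/3$ gives
\[
\frac{\sin(k \phi)}{\sin \phi} \;\le\; k - \frac{2 \phi^2 k (k^2-1)}{3 \pi^2}.
\]
Plugging in $\phi > \alpha/(2k)$ and $k^2 - 1 \ge 3k^2/4$ (valid for $k \ge 2$) makes the right-hand side at most $k - \alpha^2 k/(8\pi^2)$, which in turn is at most $(1-\alpha^2/96) k$ since $8 \pi^2 < 96$.

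The main obstacle is quantitative rather than conceptual: the prescribed constant $\alpha^2/96$ with $\alpha = 0.9$ is fairly tight, so the constants in both the Taylor estimate and the Chebyshev bound must be tracked carefully to give matching bounds uniformly in $k \ge 2$, and especially to meet cleanly at the boundary $\phi = \pi/(2k)$ where the two subcases join. The degenerate case $k = 1$ does not enter the main argument: the only symmetric partition of a two-element generator set is trivial and the theorem holds there with absolute constant $1$, so we may freely assume $k \ge 2$ throughout.
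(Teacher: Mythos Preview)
Your proof is correct, and it follows the same overall architecture as the paper's own proof in Appendix~\ref{sec:appendix_circ_ap}: reduce to bounding the Dirichlet-type sum $\sqrt{f_k(\hat\theta)}=|D_k(\hat\theta)|$ strictly below $k$, use the symmetry $\hat\theta\mapsto 2\pi-\hat\theta$, and split the remaining range into a ``near-$0$'' piece and a ``far-from-$0$'' piece. Where the two proofs diverge is in how each piece is handled.

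For the far range the paper works with the real and imaginary parts of $D_k$ separately, writing $\sum_{s}\cos(s\hat\theta)$ and $\sum_{s}\sin(s\hat\theta)$ in closed form and estimating each (Claim~\ref{clm:fkt_interval2}). You instead use the single closed form $|D_k(\hat\theta)|=|\sin(k\phi)|/\sin\phi$ together with the trivial $|\sin(k\phi)|\le 1$ and the monotonicity of $\sin$ on $[0,\pi/2]$; this is shorter and avoids the separate bookkeeping. For the near range the paper first proves by induction on $k$ that $f_k$ is monotone decreasing on $[0,3/k]$ (Claim~\ref{clm:fkt_monotone}) and then Taylor-expands at the left endpoint (Claim~\ref{clm:fkt_interval1}); you replace both steps by the Fej\'er/Chebyshev identity $\sin(k\phi)/\sin\phi=\sum_{j=0}^{k-1}\cos((k-1-2j)\phi)$ together with the quadratic bound $\cos u\le 1-2u^2/\pi^2$ and the closed form $\sum_j (k-1-2j)^2=k(k^2-1)/3$. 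Your argument sidesteps the induction entirely and makes the dependence on $\phi$ explicit throughout. Both routes land on the same constant $\alpha^2/96$ with comparable slack; your version is somewhat more self-contained, while the paper's monotonicity claim is a reusable structural fact about $f_k$. Your observation that $k=1$ must be excluded is correct and matches the paper, which states Lemma~\ref{lem:fkt_bound_whole} only for $k\ge 2$.
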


Lemma \ref{lem:circ_mid_den}
implies that to achieve Equation \eqref{eqn:circ_ap_goal} for $\Theta_1$, it suffices to find signs $x_1, \ldots, x_k \in \{\pm 1\}$ such that the numerator 
$$\abs{\sum_{s=1}^k x_s (1-\cos((a + sb)\theta))} = O(\sqrt{k}), ~ \forall \theta \in \Theta_1.$$
We apply Spencer's idea of reducing the $\Omega(\abs{\Theta_1})$ inequalities to $O(k)$ inequalities via Bernstein's inequality for trigonometric polynomials \citep{spencer85}.
Note that $\abs{\Theta_1} = \Theta(n)$ which can be much larger than $k$.

\begin{theorem}[Bernstein's inequality]
% (Ref: Theorem 1.1 of \cite{QZ19})]
Let $T(x) = \sum_{s=-k}^k a_s e^{isx}$ be a trigonometric polynomial of degree $k$. Then, 
$\sup_{x \in \mathbb{R}} \abs{T'(x)} \le k \sup_{x \in \mathbb{R}} \abs{T(x)}$,
and the constant $k$ is optimal in general (e.g., $T(x) = e^{ikx}$).
\label{thm:bernstein}
\end{theorem}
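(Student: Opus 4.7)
The plan is to prove Bernstein's inequality via Riesz's interpolation formula, which expresses $T'$ as an explicit finite sum of values of $T$ with coefficients whose absolute values sum to exactly $k$. By translation invariance it suffices to bound $|T'(0)|$, and by linearity in the coefficients $a_{-k}, \ldots, a_k$ everything reduces to verifying identities on the basis $\{e^{isx}\}_{s=-k}^{k}$.

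First I would establish Riesz's formula at the nodes $\theta_j = (2j-1)\pi/(2k)$, $j = 1, \ldots, 2k$:
\[
T'(0) \;=\; \sum_{j=1}^{2k} \lambda_j\, T(\theta_j), \qquad \lambda_j \;=\; \frac{(-1)^{j+1}}{4k\sin^2(\theta_j/2)}.
\]
Since both sides are linear functionals of $T$ on the $(2k+1)$-dimensional space of trigonometric polynomials of degree $\le k$, it suffices to verify the identity on each basis element $e^{isx}$ for $|s|\le k$. This reduces to evaluating the geometric-type sums $\sum_{j=1}^{2k} \lambda_j e^{is\theta_j}$, which can be computed in closed form using that the $\theta_j$ are precisely the $2k$ zeros of $\cos(kx)$ in $(0,2\pi)$ and that the $\lambda_j$ arise from the Lagrange expansion at those zeros.

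Next I would verify the summation identity $\sum_{j=1}^{2k} |\lambda_j| = k$, i.e., $\sum_{j=1}^{2k} \csc^2\!\big((2j-1)\pi/(4k)\big) = 4k^2$. A clean way is to use the partial-fraction expansion of $\pi^2\csc^2(\pi z)$ evaluated at $z = (2j-1)/(4k)$, or to twice logarithmically differentiate the product representation $\cos(kx) = c \prod_{j=1}^{2k}\sin((x-\theta_j)/2)$ and evaluate at $x=0$. Once this identity is in hand, Bernstein's inequality is immediate from the triangle inequality applied to Riesz's formula: $|T'(0)| \le \sum_j |\lambda_j|\, \sup_x|T(x)| = k\sup_x|T(x)|$. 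Optimality is witnessed by $T(x)=e^{ikx}$, for which $|T'(x)| = k = k|T(x)|$ identically.

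The main obstacle is the choice of nodes: any $2k$ nodes give \emph{some} valid interpolation formula for $T'(0)$, but only at the specific zeros of $\cos(kx)$ do the weights take the alternating closed form above whose absolute values sum to exactly the sharp constant $k$; any other node choice yields a formula with $\sum_j|\lambda_j| > k$, losing sharpness. Deriving this closed form and carrying out the summation identity is technical but standard once the node choice is fixed.
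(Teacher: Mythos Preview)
The paper does not prove Bernstein's inequality; it is stated as a classical result (Theorem~\ref{thm:bernstein}) and simply invoked where needed. So there is no paper proof to compare against.

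Your approach via Marcel Riesz's interpolation formula is the standard sharp proof and is correct as outlined. The nodes $\theta_j = (2j-1)\pi/(2k)$ are indeed the zeros of $\cos(kx)$ in $(0,2\pi)$, the weights $\lambda_j = (-1)^{j+1}/(4k\sin^2(\theta_j/2))$ are correct, and the key identity $\sum_{j=1}^{2k}\csc^2\!\big((2j-1)\pi/(4k)\big)=4k^2$ is a known finite sum. Once Riesz's formula and this sum are established, the triangle inequality gives the bound with the sharp constant $k$, and $T(x)=e^{ikx}$ shows optimality. The only place your sketch is thin is the actual verification of Riesz's formula on the basis $\{e^{isx}\}$ and the derivation of the $\csc^2$ sum; both are routine but do require a few lines of computation (e.g., twice differentiating the product formula for $\sin(kx)$ or $\cos(kx)$, or the partial-fraction expansion of $\csc^2$), so be prepared to fill those in explicitly.
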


We want to write the numerator as a trigonometric polynomial of a small degree. Since $a$ and $b$ can be large, we want to pull out the common angle $a\theta$ in each cosine and combine $b$ with $\theta$.
We write the cosine terms as the real parts of $e^{i(a + sb)\theta}$.
Since $\theta \in \Theta_1$, we can write $b\theta = \hat{\theta} + 2\pi l$ where $\hat{\theta} \in \left(\frac{\alpha}{k}, 2\pi - \frac{\alpha}{k}
\right)$ and $l$ is an integer.
\begin{multline}
\abs{\sum_{s=1}^k x_s (1 - \cos((a+sb)\theta))}
\le \abs{\sum_{s=1}^k x_s} + \abs{\sum_{s=1}^k x_s e^{i(a\theta + s\hat{\theta})}} \\
\le \abs{\sum_{s=1}^k x_s} + \abs{\sum_{s=1}^k x_s e^{is\hat{\theta}}}
\le \abs{\sum_{s=1}^k x_s} + \abs{\sum_{s=1}^k x_s \cos(s \hat{\theta})}
+ \abs{\sum_{s=1}^k x_s  \sin(s \hat{\theta})}.
\label{eqn:circ_approx_num}
\end{multline}
If we set $x_0 = 0$ and $x_{-s} = x_s$ for $s=1,\ldots,k$, 
then $\sum_{s=1}^k x_s \cos(s\hat{\theta}) = \frac{1}{2} \sum_{s=-k}^k x_s e^{is\hat{\theta}}$ is a trigonometric polynomial in $\hat{\theta}$ of degree $k$.
Similarly, if we set $x_0 = 0$ and $x_{-s} = -x_s$ for $s=1,\ldots,k$, 
then $\sum_{s=1}^k x_s \sin(s\hat{\theta}) = -\frac{i}{2} \sum_{s=-k}^k x_s e^{is\hat{\theta}}$ is a trigonometric polynomial in $\hat{\theta}$ of degree $k$.
The following lemma states that it suffices to find signs so that the rightmost-hand side of Equation \eqref{eqn:circ_approx_num} is at most $O(\sqrt{k})$ for $O(k)$ different values of $\hat{\theta}$.

\begin{lemma}
Let $\Lambda \defeq \left\{ \frac{2\pi j}{7k}: j = 0, 1,2,\ldots,7k-1 \right\}$.
Suppose $x_1, \ldots, x_k$ satisfies 
\begin{align}
\abs{\sum_{s=1}^k x_s \cos(s\hat{\theta})}, \abs{\sum_{s=1}^k x_s \sin(s\hat{\theta})} = O(\sqrt{k}), ~ \forall \hat{\theta} \in \Lambda.
\label{eqn:circulant_middle_signs}
\end{align}
Then, Equation \eqref{eqn:circ_ap_goal} holds for every $\theta \in \Theta_1$.
\label{lem:circ_cond_large_den}
\end{lemma}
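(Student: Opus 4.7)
The plan is to lift the hypothesis, which only controls the two trigonometric polynomials $P_c(\hat{\theta}) \defeq \sum_{s=1}^k x_s \cos(s\hat{\theta})$ and $P_s(\hat{\theta}) \defeq \sum_{s=1}^k x_s \sin(s\hat{\theta})$ at the $7k$ grid points in $\Lambda$, to a uniform bound on all of $\mathbb{R}$ by invoking Bernstein's inequality, and then to plug this back into the reduction already carried out in Equation \eqref{eqn:circ_approx_num}.

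First I would observe that both $P_c$ and $P_s$ are trigonometric polynomials of degree $k$ when extended symmetrically (as already noted in the text, via $x_{-s} = \pm x_s$ and $x_0 = 0$). Let $M_c \defeq \sup_{\hat\theta \in \mathbb{R}} |P_c(\hat\theta)|$, attained at some $\hat\theta^*$. Since $\Lambda$ partitions $[0,2\pi)$ into arcs of length $2\pi/(7k)$, there exists $\hat\theta_0 \in \Lambda$ with $|\hat\theta^* - \hat\theta_0| \le \pi/(7k)$. By Theorem \ref{thm:bernstein}, $\sup|P_c'| \le k M_c$, so the mean value theorem gives
\[
M_c = |P_c(\hat\theta^*)| \le |P_c(\hat\theta_0)| + k M_c \cdot \frac{\pi}{7k} = |P_c(\hat\theta_0)| + \frac{\pi}{7} M_c.
\]
Using the hypothesis $|P_c(\hat\theta_0)| = O(\sqrt{k})$ and rearranging (the factor $1 - \pi/7 > 0$), I get $M_c = O(\sqrt{k})$ uniformly in $\hat\theta$. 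The same argument handles $P_s$.

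Next, I would evaluate at $\hat\theta = 0 \in \Lambda$, where $P_c(0) = \sum_{s=1}^k x_s$ and $P_s(0) = 0$, so the hypothesis automatically forces $|\sum_{s=1}^k x_s| = O(\sqrt{k})$. Now for any $\theta \in \Theta_1$, write $b\theta = \hat\theta + 2\pi l$ with $\hat\theta \in (\alpha/k, 2\pi - \alpha/k)$ as in the text. Plugging the three uniform $O(\sqrt{k})$ bounds into the reduction \eqref{eqn:circ_approx_num} yields
\[
\Bigl| \sum_{s=1}^k x_s \bigl(1 - \cos((a+sb)\theta)\bigr) \Bigr| \le \Bigl|\sum_{s=1}^k x_s\Bigr| + |P_c(\hat\theta)| + |P_s(\hat\theta)| = O(\sqrt{k}).
\]

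Finally, combining this numerator bound with Lemma \ref{lem:circ_mid_den}, which gives a denominator of $\Omega(k)$ on $\Theta_1$, I obtain the desired ratio bound of $O(1/\sqrt{k})$, establishing Equation \eqref{eqn:circ_ap_goal} for every $\theta \in \Theta_1$. The only real subtlety is the Bernstein-based grid-to-continuum step, and it goes through cleanly because the grid spacing $2\pi/(7k)$ is chosen small enough that the factor $\pi/7$ in the self-bounding inequality for $M_c$ is strictly less than $1$; any grid of $\Theta(k)$ equally spaced points with sufficient density would work, and $7k$ is just a convenient explicit choice.
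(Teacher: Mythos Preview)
Your proposal is correct and follows essentially the same approach as the paper: both lift the grid hypothesis to a uniform bound via Bernstein's inequality and a self-bounding mean-value argument, read off $|\sum_s x_s| = O(\sqrt{k})$ from $\hat\theta = 0 \in \Lambda$, and then combine Equation~\eqref{eqn:circ_approx_num} with Lemma~\ref{lem:circ_mid_den}. The only cosmetic difference is that you use the half-spacing bound $\pi/(7k)$ (yielding the factor $\pi/7$) whereas the paper uses the full spacing $2\pi/(7k)$ (yielding $2\pi/7$); both are strictly less than $1$, so either works.
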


\begin{proof}
Suppose $x_1, \ldots, x_k \in \{\pm 1\}$ satisfy Equation \eqref{eqn:circulant_middle_signs}.
Then, when $\hat{\theta} = 0$, we have $\abs{\sum_{s=1}^k x_s} = O(\sqrt{k})$.

We will show $\abs{\sum_{s=1}^k x_s \cos(s \hat{\theta})}, \abs{\sum_{s=1}^k x_s  \sin(s \hat{\theta})} = O(\sqrt{k})$ for every $\hat{\theta} \in (0,2\pi)$.
We extend our signs by setting $x_0 = 0$ and $x_{-s} = x_s$ for $s=1,\ldots,k$. Let
\[
T(\hat{\theta}) \defeq \frac{1}{2} \sum_{s=-k}^k x_s e^{is \hat{\theta}}, ~
\hat{\theta}^* \defeq \argmax_{\theta \in (0,2\pi)} \abs{T(\hat{\theta})}.
\]
Then, $T(\hat{\theta}) = \sum_{s=1}^k x_s \cos(s\hat{\theta})$.
There exists a $\theta \in \Lambda$ so that $\abs{\hat{\theta}^* - \hat{\theta}} < \frac{2\pi}{7k}$. By the mean value theorem and Bernstein's inequality (Theorem \ref{thm:bernstein}), 
\begin{align*}
\abs{T(\hat{\theta}^*)} - \abs{T(\hat{\theta})} \le 
\abs{T(\hat{\theta}^*) - T(\hat{\theta})}  \le \frac{2\pi}{7k} \cdot \max_{\hat{\theta}' \in (0,2\pi)} \abs{T'(\hat{\theta}')}
\le \frac{2\pi}{7} \abs{T(\hat{\theta}^*)}.
\end{align*}
By the assumption in Equation \eqref{eqn:circulant_middle_signs},
\[
\abs{T(\hat{\theta}^*)} \le (1-\frac{2\pi}{7}) \abs{T(\hat{\theta})} = O(\sqrt{k}).
\]
Similarly, the assumption in Equation \eqref{eqn:circulant_middle_signs} for the signed sum of the sine functions implies $\abs{\sum_{s=1}^k x_s \sin(s\hat{\theta})} = O(\sqrt{k})$ for every $\theta \in (0,2\pi)$.
Plugging into Equation \eqref{eqn:circ_approx_num}, we have $\abs{\sum_{s=1}^k x_s \left( 1 - \cos((a+sb)\theta)
\right)} = O(\sqrt{k})$, which upper bounds the numerator in Equation \eqref{eqn:circ_ap_goal}.
By the lower bound of the denominator for $\theta \in \Theta_1$ in Lemma \ref{lem:circ_mid_den}, Equation \eqref{eqn:circ_ap_goal} holds for every $\theta \in \Theta_1$.
\end{proof}

\subsection{Sufficient Conditions for Set $\Theta_2$}

We derive sufficient conditions for the signs $x_1, \ldots, x_k$ such that Equation \eqref{eqn:circ_ap_goal}
holds for every $\theta \in \Theta_2$.
For $\theta \in \Theta_2$, the denominator value of Equation \eqref{eqn:circ_ap_goal} can be much smaller than $k$.
Since $b\theta$ (modulo $2\pi$) is sufficiently close to $0$, we will approximate the cosine and sine functions via their Taylor expansion approximation. 

We write $b\theta = \hat{\theta} + 2\pi l$ where $\hat{\theta} \in \left[-\frac{\alpha}{k}, \frac{\alpha}{k} \right]$ and $l$ is an integer (recall $\alpha = 0.9$).
Then,
\begin{align*}
& \sum_{s=1}^k x_s (1 - \cos((a+sb)\theta)) 
 \\ = & \cos(a\theta) \sum_{s=1}^k x_s (1-\cos(s \hat{\theta}))
+ \sin(a\theta) \sum_{s=1}^k x_s \sin(s \hat{\theta})
+ (1-\cos(a\theta)) \sum_{s=1}^k x_s \\
\defeq & \cos(a\theta) \beta_1 + \sin(a\theta) \beta_2 + (1-\cos(a\theta)) \beta_3 \\
\defeq & g(\beta_1, \beta_2, \beta_3).
\end{align*}
In addition, we define $\beta_j' = \beta_j$ with $x_1 = \cdots = x_k = 1$ for $j=1,2,3$.
Note that the left-hand side of Equation \eqref{eqn:circ_ap_goal} equals $\abs{g(\beta_1, \beta_2, \beta_3)/ g(\beta_1', \beta_2', \beta_3')}$.
The following claim upper bounds this ratio via upper bounding the ratios $\abs{\beta_j} /\abs{ \beta_j'}$ for $j=1,2,3$.
We defer its proof to \ref{sec:appendix_circ_ap}.

\begin{claim}
Suppose $\abs{\beta_j} \le \gamma \abs{\beta'_j}$ for $j =1,2,3$. Then, 
\begin{align*}
\abs{\frac{g(\beta_1, \beta_2, \beta_3)}{ g(\beta_1', \beta_2', \beta_3')}} \le 13 \gamma.
\end{align*}
\label{clm:circ_small_den_goal}
\end{claim}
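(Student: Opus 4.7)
The plan is to reduce Claim 3.3 to a deterministic inequality on $\phi:=a\theta$ and the quantities $\beta_1',\beta_2',k$, and then verify that inequality by a case split on how close $\phi$ is to a multiple of $2\pi$. Applying the triangle inequality to $g(\beta_1,\beta_2,\beta_3)=\cos\phi\,\beta_1+\sin\phi\,\beta_2+(1-\cos\phi)\beta_3$, together with the hypothesis $\abs{\beta_j}\le\gamma\abs{\beta_j'}$ and the non-negativity $\beta_1'=\sum_s(1-\cos(s\hat\theta))\ge 0$ and $\beta_3'=k\ge 0$, gives
\[
\abs{g(\beta_1,\beta_2,\beta_3)}\;\le\;\gamma\bigl(\abs{\cos\phi}\beta_1'+\abs{\sin\phi}\abs{\beta_2'}+(1-\cos\phi)k\bigr).
\]
By the angle-sum identity and $b\theta\equiv\hat\theta\pmod{2\pi}$, the denominator can be rewritten as $D:=g(\beta_1',\beta_2',\beta_3')=\sum_s(1-\cos((a+sb)\theta))=k(1-\cos\phi)+\cos\phi\,\beta_1'+\sin\phi\,\beta_2'$, which is the nonnegative sum on the right. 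The claim therefore reduces to the unconditional inequality
\[
\abs{\cos\phi}\beta_1'+\abs{\sin\phi}\abs{\beta_2'}+(1-\cos\phi)k\;\le\;13\,D.
\]

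The key auxiliary estimates I would use are: (i) the universal bounds $\beta_1'\le\alpha^2 k/3$ and $\abs{\beta_2'}\le\alpha k/2$, which follow from $|s\hat\theta|\le\alpha=0.9$ together with $1-\cos u\le u^2/2$ and $\abs{\sin u}\le\abs{u}$; and (ii) the Cauchy--Schwarz estimate $\abs{\beta_2'}^2\le 2k\beta_1'$, obtained from $\abs{\sin u}\le 2\abs{\sin(u/2)}$ and $\beta_1'=2\sum_s\sin^2(s\hat\theta/2)$. The proof then splits on the circular distance $\delta$ from $\phi$ to the nearest multiple of $2\pi$. In the non-resonant case $\delta\ge 2\alpha$, every shifted angle $\phi_s:=\phi+s\hat\theta$ remains at circular distance at least $\alpha$ from any multiple of $2\pi$, hence $1-\cos\phi_s\ge 1-\cos\alpha$ and $D\ge k(1-\cos\alpha)$; since each of $\beta_1'$, $\abs{\beta_2'}$, and $(1-\cos\phi)k$ is then $O(k)$, the bound follows with constants comfortably below 13. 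In the near-resonant case $\delta<2\alpha$, I would centre $\phi_s=\mu+\tau_s$ with $\mu=\phi+\hat\theta(k+1)/2$ and $\tau_s=\hat\theta(s-(k+1)/2)$ so that $\sum_s\tau_s=0$, and Taylor-expand $\sin^2(\phi_s/2)$ to obtain a lower bound of the form $D\ge c_1 k\mu^2+c_2\hat\theta^2 k^3$ in which both pieces are non-negative. These two pieces respectively control $(1-\cos\phi)k\le k\phi^2/2\le k\mu^2+\hat\theta^2 k^3/4$ (via $\phi=\mu-\hat\theta(k+1)/2$ and AM--GM) and $\beta_1'\asymp\hat\theta^2 k^3$, while the cross term $\abs{\sin\phi}\abs{\beta_2'}$ is absorbed by AM--GM together with the Cauchy--Schwarz estimate: $\abs{\sin\phi}\abs{\beta_2'}\le t\sin^2\phi\cdot k+\abs{\beta_2'}^2/(4tk)\le 2t(1-\cos\phi)k+\beta_1'/(2t)$ for a tuned $t>0$.

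The main obstacle is the cross term $\sin\phi\,\beta_2'$ in $D$: being signed, it can destructively cancel the positive contributions $k(1-\cos\phi)$ and $\cos\phi\,\beta_1'$, making $D$ potentially much smaller than any single term on the left-hand side suggests. The feature that saves the bound is the Cauchy--Schwarz inequality $\abs{\beta_2'}^2\le 2k\beta_1'$: any $\abs{\beta_2'}$ large enough to cause appreciable cancellation in $D$ forces $\beta_1'$ to be large, but large $\beta_1'$ is in turn driven by the same quadratic-in-$\hat\theta$ Taylor piece that provides a matching lower bound on $D$. Tracking this balance carefully across the two cases, and plugging in $\alpha=0.9$, is what I expect to yield the explicit constant 13.
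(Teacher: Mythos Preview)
Your reduction is correct and your overall strategy (case split on the circular distance of $\phi$ from $2\pi\mathbb Z$, together with the Cauchy--Schwarz bound $\abs{\beta_2'}^2\le 2k\beta_1'$) is sound and genuinely different from the paper's argument. However, your stated goal of recovering the constant $13$ cannot succeed: the inequality you reduce to, $\abs{\cos\phi}\beta_1'+\abs{\sin\phi}\abs{\beta_2'}+(1-\cos\phi)k\le 13D$, is false. Take $k$ large, $h:=k\hat\theta>0$ small, and $\phi=-h/\sqrt3$; to leading order $D/k\to \phi^2/2+\phi h/2+h^2/6$, $\beta_1'/k\to h^2/6$, $\abs{\beta_2'}/k\to h/2$, and the ratio tends to $(2+\sqrt3)/(2-\sqrt3)=7+4\sqrt3\approx 13.93$. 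Since your triangle-inequality step is tight (choose $\beta_j=\pm\gamma\abs{\beta_j'}$ with aligned signs), this shows the sharp constant in the claim is $7+4\sqrt3$, not $13$; the paper's own proof has the same defect, as its ``similar argument'' for the sine term amounts to $5D+\sin\phi\,\beta_2'\ge 0$, which fails at the same configuration. This is harmless for the application (Lemma~\ref{lem:circ_cond_small_den}), which only needs an $O(1)$ constant, and your outline does deliver some finite constant---just not $13$, and your Case~2 bounds as written (via $1-\cos x\ge cx^2$ with $c<\tfrac12$) will give something substantially larger.

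For comparison, the paper does not case-split on $\abs{\phi}$ or pass through your single scalar inequality. Instead it writes $g(\beta)-\gamma D=g(\beta-\gamma\beta')$ and bounds the three terms $\cos\phi(\beta_1\mp\gamma\beta_1')$, $\sin\phi(\beta_2\mp\gamma\beta_2')$, $(1-\cos\phi)(\beta_3\mp\gamma\beta_3')$ separately by $2\gamma D$, $10\gamma D$, $0$, splitting only on the \emph{sign} of $\cos\phi$ (respectively $\sin\phi\,\beta_2'$). The nontrivial cases reduce to positivity statements like $D+\cos\phi\,\beta_1'\ge 0$, verified via the one-line Cauchy--Schwarz $\cos\phi\cdot A+\sin\phi\cdot B\ge-\sqrt{A^2+B^2}$ combined with the Taylor estimates $2\beta_1'\approx h^2k/3$, $\beta_2'\approx hk/2$. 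That route avoids your resonance analysis and the AM--GM balancing of the cross term; your route is more hands-on but makes the near-tightness of the constant visible.
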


To guarantee Equation \eqref{eqn:circ_ap_goal} for $\Theta_2$, by Claim \ref{clm:circ_small_den_goal}, 
it suffices to find signs $x_1, \ldots, x_k \in \{\pm 1\}$ such that $\abs{\beta_j} \le O(\frac{1}{\sqrt{k}}) \abs{\beta_j'}$ for $j=1,2,3$.
The following lemma further simplifies these conditions.

\begin{lemma}
Let $L \defeq 38\log k$. Suppose the signs $x_1, \ldots, x_k$ satisfies 
\begin{align}
\abs{\sum_{s=1}^k x_s \left( \frac{s}{k}
\right)^l} = O(\log k), ~ \forall 0 \le l \le L.
\label{eqn:circ_cond_small_den}
\end{align}
Then, Equation \eqref{eqn:circ_ap_goal} holds for every $\theta \in \Theta_2$.
\label{lem:circ_cond_small_den}
\end{lemma}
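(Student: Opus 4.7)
The plan is to apply Claim \ref{clm:circ_small_den_goal}, which reduces the task to showing $|\beta_j| \le O(1/\sqrt{k})\,|\beta_j'|$ for each $j \in \{1,2,3\}$. For $\theta \in \Theta_2$, write $b\theta = \hat\theta + 2\pi l$ with $|\hat\theta| \le \alpha/k$, and set $t = k\hat\theta$, so that $|t| \le \alpha = 0.9$. Introduce the normalized moments $\mu_l = \sum_{s=1}^k x_s (s/k)^l$ and $\mu_l' = \sum_{s=1}^k (s/k)^l$; the hypothesis reads $|\mu_l| = O(\log k)$ for $0 \le l \le L = 38\log k$. The case $j = 3$ is immediate since $|\beta_3|/\beta_3' = |\mu_0|/k = O(\log k/k) = O(1/\sqrt{k})$.

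For $j = 1,2$, the key step is to Taylor-expand $1-\cos$ and $\sin$ at $0$ and rewrite
\[
\beta_1 = \sum_{l \ge 1} \frac{(-1)^{l+1} t^{2l}}{(2l)!}\,\mu_{2l}, \qquad
\beta_2 = \sum_{l \ge 0} \frac{(-1)^{l} t^{2l+1}}{(2l+1)!}\,\mu_{2l+1},
\]
with the analogous identities for $\beta_1', \beta_2'$ obtained by replacing each $\mu_l$ with $\mu_l'$. I then split each series at $l = L/2 = 19\log k$. On the low-degree part, substituting $|\mu_l| \le O(\log k)$ term by term and using the elementary estimates $\cosh(t) - 1 \le t^2$ and $\sinh(|t|) \le |t|\sinh(1)$ (both valid for $|t| \le 1$, since $(\cosh(t)-1)/t^2$ and $\sinh(|t|)/|t|$ are increasing in $|t|$) yields contributions of at most $O(t^2 \log k)$ to $|\beta_1|$ and $O(|t|\log k)$ to $|\beta_2|$.

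For the high-degree tail, the key trick is to bound $|\mu_{2l}| \le \mu_{2l}'$ (and similarly for odd indices), swap the order of summation over $s$ and $l$, and recognize the inner sum as the tail of $\cosh(s\hat\theta) - 1$ (respectively $\sinh(s\hat\theta)$) past degree $L$. Since $|s\hat\theta| \le 0.9 < 1$, this tail is dominated by $(s\hat\theta)^{L+2}/(L+2)!$, from which one can pull out a factor of $(s\hat\theta)^2$; summing over $s$ produces an overall bound proportional to $t^2/(L+2)!$ (respectively $|t|/(L+2)!$), which by Stirling's approximation with $L = 38\log k$ decays faster than any inverse polynomial in $k$ and is therefore negligible next to the low-degree part. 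Extracting this explicit $t$-dependence from the tail is the main obstacle: a crude bound like $|\mu_{2l}| \le k$ would lose the scaling and fail when $\hat\theta$ is very small, and this is precisely why $L$ must be a sufficiently large multiple of $\log k$.

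Finally, for the denominators I use the standard Taylor estimates on $|x| \le 0.9$, namely $1-\cos x \ge 0.46\,x^2$ and $|\sin x| \ge 0.86\,|x|$, to obtain $\beta_1' \ge \Omega(t^2 k)$ and $|\beta_2'| \ge \Omega(|t|\,k)$. Combining these with the numerator bounds gives $|\beta_j|/|\beta_j'| = O(\log k/k) = O(1/\sqrt{k})$ for $j \in \{1,2\}$, and Claim \ref{clm:circ_small_den_goal} then delivers Equation \eqref{eqn:circ_ap_goal} for every $\theta \in \Theta_2$. The degenerate case $\hat\theta = 0$, where $\beta_1, \beta_2, \beta_1', \beta_2'$ all vanish simultaneously, can be handled separately by reading the conclusion directly off the definition of $g$, since only the $j = 3$ bound survives.
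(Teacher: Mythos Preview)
Your proposal is correct and follows essentially the same approach as the paper: Taylor-expand $\beta_1,\beta_2$ (and $\beta_1',\beta_2'$), truncate at degree $L$, control the low-degree part via the hypothesis $|\mu_l|=O(\log k)$ and the tail via $|s\hat\theta|\le\alpha<1$, then invoke Claim~\ref{clm:circ_small_den_goal}. Your bookkeeping with $t=k\hat\theta$ and $\mu_l$ is in fact slightly sharper (you get $|\beta_j|/|\beta_j'|=O((\log k)/k)$ by summing the full $\cosh$/$\sinh$ series, whereas the paper sums $O(\log k)$ terms each of size $O(\log k)$ to obtain $O((\log^2 k)/k)$), and for the tail you appeal to Stirling on $(L+2)!$ where the paper simply uses $\alpha^{L}\le k^{-4}$; both routes are valid and lead to the same conclusion.
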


\begin{proof}
The assumption in Equation \eqref{eqn:circ_cond_small_den} with $l=0$ implies 
\[
\abs{\beta_3} \le O(\frac{\log k}{k}) \beta_3' = O(\frac{1}{\sqrt{k}})\beta_3'.
\]
For $\beta_1$ and $\beta_1'$, since $s\hat{\theta} \in \left[-\alpha, \alpha\right]$, we approximate 
\begin{align*}
\beta_1' = 
\sum_{s=1}^k 1- \cos(s\hat{\theta}) \ge \sum_{s=1}^k \frac{(s\hat{\theta})^2}{2} - \frac{(s\hat{\theta})^4}{4!}
\ge \frac{1}{2}(1-\frac{\alpha^2}{12}) \hat{\theta}^2 \sum_{s=1}^k s^2 \ge \frac{1}{12} k^3 \hat{\theta}^2.
\end{align*}
For $\beta_1$, we take the Taylor expansion for $\cos (s\hat{\theta})$ and apply Taylor's reminder theorem.
\begin{align*}
\beta_1 = 
\abs{\sum_{s=1}^k x_s (1-\cos(s\hat{\theta}))} 
\le \sum_{\substack{2 \le l\le L \\ l \text{ is even}}} \abs{\sum_{s=1}^k x_s \frac{(s\hat{\theta})^{l}}{{l}!} } 
+ \abs{\sum_{s=1}^k \frac{(s\hat{\theta})^{L+2}}{(L+ 2)!} }.
% \label{eqn:circ_small_den_approx_cos}
\end{align*}
The last term
\begin{align*}
 \sum_{s=1}^k \frac{(s\hat{\theta})^{L+2}}{(L+2)!}  
 \le \hat{\theta}^2 k^3 (k \hat{\theta})^{L}
 \le \hat{\theta}^2 k^3 \alpha^{L}
 \le \frac{\hat{\theta}^2}{k},
\end{align*}
where the second inequality holds due to $-\frac{\alpha}{k} \le \hat{\theta} \le \frac{\alpha}{k}$.
Thus,
\begin{align*}
\frac{\abs{\beta_1}}{\beta_1'}
& \le 12 k^{-3} \sum_{\substack{2 \le l \le L \\ l \text{ is even}} } \frac{\hat{\theta}^{l-2}}{l!} \abs{\sum_{s=1}^k x_s s^l } + O(k^{-4}) \\
& \le 12 k^{-1} \sum_{\substack{2 \le l \le L \\ l \text{ is even}} } \abs{\sum_{s=1}^k x_s \left( \frac{s}{k} \right)^l } + O(k^{-4})
\tag{since $\hat{\theta} < \frac{1}{k}$} \\
& = O(k^{-1} \log^2 k)
\tag{by Lemma assumption in Equation \eqref{eqn:circ_cond_small_den}} \\
& = O(\frac{1}{\sqrt{k}}).
\end{align*}
Similarly, we approximate 
\begin{align*}
\abs{\beta_2'} \ge \frac{1}{4}k^2 \abs{\hat{\theta}},
~ \abs{\beta_2} \le \sum_{\substack{1 \le l < L \\ l \text{ is odd}}} \abs{\sum_{s=1}^k x_s \frac{(s\hat{\theta})^l}{l!}} + O(\frac{\hat{\theta}^2}{k}).
\end{align*}
Equation \eqref{eqn:circ_cond_small_den} guarantees $\abs{\beta_2} \le O(\frac{1}{\sqrt{k}}) \abs{\beta_2'}$.
By Claim \ref{clm:circ_small_den_goal}, Equation \eqref{eqn:circ_ap_goal} holds for $\Theta_2$.
\end{proof}

\subsection{Constructing Signs}
\label{sec:circ_ap_combine}

We construct signs $x_1, \ldots, x_k \in \{\pm 1\}$ that satisfy the conditions in Lemma \ref{lem:circ_cond_large_den} and \ref{lem:circ_cond_small_den}, which implies Equation \eqref{eqn:circ_ap_goal} holds for every $\theta \in (0,2\pi)$ and proves Theorem \ref{thm:circ_ap}.
We will iteratively apply the following partial signing theorem by \cite{LM15}.

\begin{theorem}[\cite{LM15}]
Let $v_1, \ldots, v_m \in \mathbb{R}^n$ and $x_0 \in [-1,1]^n$. 
Let $c_1, \ldots, c_m \ge 0$ be thresholds such that $\sum_{j=1}^m \exp(-c_j^2 / 16) \le n / 16$. 
Let $\delta > 0$ be a small parameter. Then, there exists a randomized algorithm that finds an $x \in \{\pm 1\}^n$ such that with probability at least $0.1$,
(1) $\abs{\langle v_j, x-x_0 \rangle} \le c_j \norm{v_j}_2$, for every $j \in \{1,\ldots,m\}$;
and (2) $\abs{x_j} \ge 1 - \delta$ for at least $n/2$ indices $j \in \{1,\ldots,n\}$.
Moreover, the algorithm runs in time 
$O((m+n)^3 \delta^{-2} \log (nm/\delta))$.
\label{thm:LM15}
\end{theorem}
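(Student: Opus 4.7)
The plan is to realize the output signing as the endpoint of a carefully constrained random walk starting at $x_0$, arranged so that (a) the walk never violates any discrepancy constraint $|\langle v_j, x - x_0\rangle| \le c_j \|v_j\|_2$ nor escapes the cube $[-1,1]^n$, and (b) after a bounded amount of simulated time at least $n/2$ of the coordinates are pinned near $\pm 1$. The walk would take small Gaussian steps $x_{t+1} = x_t + \sqrt{\gamma}\, g_t$, where $g_t$ is a standard Gaussian projected onto the subspace $U_t$ orthogonal to every currently active constraint: the coordinate direction $e_i$ becomes active the first time $|x_t(i)|$ reaches $1 - \delta$, and the direction $v_j$ becomes active the first time $|\langle v_j, x_t - x_0\rangle|$ reaches $c_j \|v_j\|_2$. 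By construction both families of constraints are then preserved for all subsequent $t$.

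Step one would be to control the number of activated discrepancy constraints. Before $v_j$ becomes active, the one-dimensional process $M_t^{(j)} := \langle v_j, x_t - x_0\rangle / \|v_j\|_2$ is a mean-zero martingale with per-step variance at most $\gamma$, so after total simulated time $T = \gamma t$ it is distributed approximately as $\mathcal{N}(0, T)$. Taking $T$ to be a small absolute constant chosen to match the $\exp(-c_j^2/16)$ form of the hypothesis, a reflection-principle bound gives that the probability that $v_j$ is ever activated during the walk is at most $O(\exp(-c_j^2/16))$. Summing over $j$ and using $\sum_j \exp(-c_j^2/16) \le n/16$, the expected number of activated discrepancy constraints is $O(n/16)$, hence at most $n/4$ with constant probability by Markov.

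Step two would be to show that by time $T$ at least $n/2$ coordinates have frozen at $\pm(1-\delta)$. The key identity is $\mathbb{E}\|x_T - x_0\|_2^2 = \int_0^T \mathbb{E}[\mathrm{dim}(U_t)]\, dt$ in the continuous-time limit, combined with the deterministic bound $\|x_T - x_0\|_2^2 \le 4n$ coming from the box. If at every $t \le T$ both at most $n/4$ discrepancy constraints were active \emph{and} fewer than $n/2$ coordinates were frozen, then $\mathrm{dim}(U_t) \ge n/4$ throughout, forcing expected squared displacement $\Omega(nT)$, which for a sufficiently large constant $T$ contradicts the box bound with constant probability. Intersecting this event with the event from step one yields at least $n/2$ frozen coordinates with probability at least $0.1$.

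Step three would be discretization and runtime. I would pick $\gamma = \Theta(\delta^2)$ so that overshoots of any boundary are $O(\delta)$ with high probability; implement each step by maintaining an orthonormal basis of $U_t^{\perp}$ and sampling a projected Gaussian, at cost $O((m+n)^3)$ per step; run for $O(T/\gamma) = O(\delta^{-2})$ steps; and boost the constant success probability by $O(\log(nm/\delta))$ independent repetitions. Finally I would round the frozen coordinates to their nearest value in $\{\pm 1\}$ and the remaining ones arbitrarily, absorbing the $O(\delta)$ overshoot into the thresholds $c_j$. The main obstacle I anticipate is executing steps one and two jointly: the discrepancy tail bound must use the \emph{pathwise} maximum of each martingale rather than its terminal value, while the dimension-counting argument must hold with constant probability rather than merely in expectation; coupling these two events without losing a $\log n$ factor is the delicate part of the analysis.
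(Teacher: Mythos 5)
This statement is not proved in the paper at all: it is quoted verbatim as Theorem~3.6 from Lovett and Meka \citep{LM15}, so there is no in-paper argument to compare against. Your sketch is, in essence, a faithful reconstruction of the original Lovett--Meka ``edge-walk'' proof: the constrained Gaussian random walk in $[-1,1]^n$, freezing a coordinate direction when it reaches $\pm(1-\delta)$ and a discrepancy direction when $|\langle v_j, x_t-x_0\rangle|$ reaches $c_j\norm{v_j}_2$, the Gaussian tail bound matched to $\exp(-c_j^2/16)$, the energy identity $\expec{}{\norm{x_T-x_0}_2^2}=\gamma\sum_t \expec{}{\dim{U_t}}$ against the box bound, and the $\gamma=\Theta(\delta^2)$ discretization. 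So the approach is correct and is exactly the standard one.

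The one place your sketch should be tightened is the ``delicate part'' you flag at the end, and both halves of it dissolve in the actual argument. First, you do not need a pathwise-maximum or reflection bound for the discrepancy martingales: because a constraint is \emph{frozen} the moment it becomes tight, the walk never moves in the direction $v_j$ afterwards, so the terminal value $\langle v_j, x_T-x_0\rangle$ already records whether the constraint was ever activated; the ordinary Gaussian tail bound on the terminal value suffices. Second, rather than intersecting two events that each hold ``with constant probability'' (which in general gives nothing), Lovett--Meka run a single expectation computation: with $A$ the set of frozen coordinates and $B$ the set of tight discrepancy constraints at time $T$, monotonicity of $\dim{U_t}$ plus the energy identity gives $\expec{}{\dim{U_T}}\le 4n/(\gamma T)$, hence $\expec{}{|A|+|B|}\ge n-4n/(\gamma T)$; combining with $\expec{}{|B|}\le 2\sum_j\exp(-c_j^2/16)\le n/8$ yields $\expec{}{|A|}\ge 5n/8$ for a suitable constant total time, and since $|A|\le n$ a reverse-Markov bound gives $\Pr[|A|\ge n/2]\ge 0.1$. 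With those two repairs your outline is the complete proof; no $\log n$ factor is lost.
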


The success probability $0.1$ in Theorem \ref{thm:LM15} can be boosted to $\frac{1}{\poly(nm)}$ by independently repeating the algorithms $\Theta (\log (mn))$ times.

\begin{proof}[Proof of Theorem \ref{thm:circ_ap}]
    
We construct $m \defeq 14k + L + 1 = O(k)$ vectors in $k$ dimensions, which correspond to the conditions in Lemma \ref{lem:circ_cond_large_den} and \ref{lem:circ_cond_small_den}. 
We define for $1 \le j \le 7k$,
\[
v_j \defeq \begin{pmatrix}
    \cos(\theta_j), & \cos(2\theta_j), & \ldots, & \cos(k\theta_j)
\end{pmatrix}^\top,
\]
where $\theta_j = \frac{2\pi (j-1)}{7k}$, for $7k+1 \le j \le 14k$, 
\[
v_j \defeq \begin{pmatrix}
    \sin(\theta_{j-7k}), & \sin(2\theta_{j-7k}), & \ldots, & \sin(k\theta_{j-7k})
\end{pmatrix}^\top,
\]
and for $j \ge 14k +1$,
\[
v_j \defeq \begin{pmatrix}
    (\frac{1}{k})^{(j-14k-1)}, & (\frac{2}{k})^{(j-14k-1)}, & \ldots, & (\frac{k}{k})^{(j-14k-1)}    
\end{pmatrix}^\top.
\]
In addition, we set $\delta = \frac{1}{k}$.

We iteratively apply Theorem \ref{thm:LM15} with these parameters.
Let $x^{(t)}$ be the signing vector at iteration $t$.
Initially, $x^{(0)} = 0$. 
For $t = 1, 2, \ldots$, we say variable $s \in \{1,\ldots,k\}$ is \emph{alive} if $\abs{x^{(t-1)}_s} < 1-\delta$.
Let $a_t$ be the number of alive variables. If $a_t \le 32(L+1)$, we terminate the iterations. 
Otherwise, we apply Theorem \ref{thm:LM15} to the sub-vectors of $v_1, \ldots, v_m$ restricted to alive variables. 
We set the thresholds $c_j = 7 \sqrt{a_t \ln(14k / a_t)}$ for $1 \le j \le 14k$ and $c_j = 0$ for $j \ge 14k + 1$. We can check these thresholds satisfy the condition in Theorem \ref{thm:LM15}:
\begin{align*}
\sum_{j=1}^{14k} \exp \left( - \frac{47 \ln(14k / a_t)}{16}
\right) + L + 1
\le 14k \cdot (\frac{a_t}{14k})^{16/47} + \frac{a_t}{32}
\le \frac{a_t}{16}.
\end{align*}
The last inequality holds since $a_t \le k$.
By Theorem \ref{thm:LM15} with $x_0 = x^{(t-1)}$, with high probability and in time polynomial in $k$, we can find $x^{(t)} \in [-1,1]^k$ such that (1) $x^{(t)}$ has at most $\frac{a_t}{2}$ alive variables and $x^{(t)}_s = x^{(t-1)}_s$ if $\abs{x^{(t-1)}_s} \ge 1-\delta$, and (2)
\[
\abs{\langle v_j, x^{(t)} - x^{(t-1)} \rangle} \le c_j \sqrt{a_t}, ~ \forall j =1,\ldots,m
\]
When we terminate the algorithm at iteration $T$, for every $1 \le j \le 14k$,
\begin{multline*}
\abs{\langle v_j, x^{(T)} \rangle}
\le \sum_{t} 7 \sqrt{a_t \ln (\frac{14k}{a_t}) } 
\le 7 \sqrt{k} \sum_{t} \sqrt{\frac{a_t}{k} \ln (\frac{14k}{a_t}) } \\
\le 7 \sqrt{k} \sum_t \sqrt{\frac{ \ln(14 \cdot 2^{t-1})} {2^{t-1}}} = O(\sqrt{k}),
\end{multline*}
and for every $j \ge 14k+1$, 
\[
\langle v_j, x^{(T)} \rangle = 0.
\]
In addition, at most $O(\log k)$ variables are still alive.
We arbitrarily sign these alive variables $\pm 1$, which increases the discrepancy at most $O(\log k)$. 
We round every $x_s$ with an absolute value less than $1$ to its sign, which increases the discrepancy by $O(1)$ since the entries of $v_j$'s are in $[-1,1]$. 
So, we get signs $x_1, \ldots, x_k \in \{\pm 1\}$ satisfying the conditions in Lemma \ref{lem:circ_cond_large_den} and \ref{lem:circ_cond_small_den} and establish Theorem \ref{thm:circ_ap}.
\end{proof}

\section{A Lower Bounds for Partitioning the Generators of Circulant Graphs}
\label{sec:lower}

In this section, we establish a lower bound for the error of partitioning the generators of a circulant graph with increasingly large gaps between them, proving Theorem \ref{thm:intro_lower}. 
First, we prove a lower bound for the approximation error in terms of \(K\), the number of generators. 
Second, we show that the maximum effective resistance between any two endpoints of an edge in a circulant graph
is \(O\left(\frac{1}{K}\right)\).

\begin{theorem}
There exists a constant $K_0 > 0$ such that the following holds for every integer $K > K_0$.
Let $a_1, \ldots, a_K$ be positive integers satisfying 
\begin{align}
\frac{a_k}{a_1 + \ldots + a_{k-1}} > 4 \log K, ~ \forall k = 2,\ldots,K,
\label{eqn:lower_assumption}
\end{align}
and let $N \ge 32 a_K$ be an integer. In addition, we assume $C = \{\pm a_1, \ldots, \pm a_K\}$ generates the group $\mathbb{Z}_N$.
Let $X = X(\mathbb{Z}_N, C)$ be the circulant graph.
Then, \emph{no} partition of $C$ into symmetric subsets $C_1 \cup C_2$ satisfies
\begin{align}
- \frac{1}{20} \sqrt{\frac{\log K}{K}} L_{X} \pleq
L_{X(\mathbb{Z}_N, C_1)} - L_{X(\mathbb{Z}_N, C_2)} \pleq \frac{1}{20} \sqrt{\frac{\log K}{K}}  L_{X}.
\label{eqn:lower_matrix_goal}
\end{align}
\label{thm:lower}
\end{theorem}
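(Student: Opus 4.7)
The plan is to establish Theorem~\ref{thm:lower} via the two components outlined in Section~\ref{sec:intro_approach}. By Fact~\ref{fac:circ_eigs}, the eigenvalues of $L_X$ are $\lambda_j(L_X) = 2\sum_{k=1}^K(1-\cos(a_k\theta_j))$ with $\theta_j = 2\pi j/N$, and since $L_{X(\mathbb{Z}_N, C_1)} - L_{X(\mathbb{Z}_N, C_2)}$ shares these eigenvectors, condition~\eqref{eqn:lower_matrix_goal} is equivalent, for a signing $x \in \{\pm 1\}^K$ encoding the partition, to requiring
$$\max_{j \in \{1, \ldots, N-1\}} \frac{\bigl|\sum_{k=1}^K x_k(1-\cos(a_k\theta_j))\bigr|}{\sum_{k=1}^K (1-\cos(a_k\theta_j))} \le \tfrac{1}{20}\sqrt{(\log K)/K}.$$
As the denominator is at most $2K$, it suffices to show that for every signing, some $j$ yields a numerator exceeding $(1/10)\sqrt{K\log K}$. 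I proceed by contradiction: assume the contrary. Write the numerator as $|\sigma - S_K(\theta_j)|$ with $\sigma \defeq \sum_{k=1}^K x_k$ and $S_K(\theta) \defeq \sum_{k=1}^K x_k \cos(a_k\theta)$.

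The first step bounds $|\sigma|$ by averaging the contradictory bound over $j$. Using the identity $\sum_{j=1}^{N-1}\cos(a\theta_j) = -1$, valid for every integer $a \not\equiv 0 \pmod N$ (applicable to $a_k$ and $a_k \pm a_l$ since $a_k < N/32$), a direct computation yields $\frac{1}{N-1}\sum_{j=1}^{N-1}(\sigma - S_K(\theta_j))^2 = \sigma^2(1 + O(1/N)) + K/2 + O(K/N)$. Combining with the assumed pointwise bound $(K\log K)/100$ on each term forces $\sigma^2 \le (K\log K)/100$ for $K$ sufficiently large, hence $|\sigma| \le (1/10)\sqrt{K\log K}$. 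The remaining task is to find $j^*$ with $|S_K(\theta_{j^*})| > (1/5)\sqrt{K\log K}$, for then $|\sigma - S_K(\theta_{j^*})| \ge |S_K(\theta_{j^*})| - |\sigma| > (1/10)\sqrt{K\log K}$, producing the contradiction.

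To produce such a $j^*$, lower bound $\max_{\theta \in (0,2\pi)}|S_K(\theta)|$ via a moment argument with $m \defeq \lfloor \log K \rfloor$, then transfer to the discrete grid using Bernstein's inequality (Theorem~\ref{thm:bernstein}). Expanding $S_K^{2m}$ and applying the product-to-sum identity gives
$$E_\theta[S_K(\theta)^{2m}] = 2^{-2m} \sum_{(k_1, \ldots, k_{2m})} \sum_{\epsilon \in \{\pm 1\}^{2m}} x_{k_1} \cdots x_{k_{2m}}\, \mathbf{1}\bigl[\textstyle \sum_i \epsilon_i a_{k_i} = 0\bigr].$$
The strong lacunarity~\eqref{eqn:lower_assumption} is exactly what I need to show that the indicator vanishes outside ``fully paired, opposite-sign'' tuples: if $k^*$ is the largest index appearing and the signed count $p$ of $\epsilon_i$'s at positions with $k_i = k^*$ is nonzero, then the contribution of all other indices is at most $2m \cdot (a_1 + \cdots + a_{k^*-1}) < 2m \cdot a_{k^*}/(4\log K) \le a_{k^*}/2$ when $2m \le 2\log K$, forcing $\sum_i \epsilon_i a_{k_i} \neq 0$; iterate on the smaller indices. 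For surviving terms $x_{k_1}\cdots x_{k_{2m}} = 1$, and a standard pairing count yields $E_\theta[S_K^{2m}] \ge (1 - o(1))(2m-1)!!(K/2)^m$. Stirling then gives $\max_\theta |S_K(\theta)| \ge (1-o(1))\sqrt{K\log K/e} > (1/4)\sqrt{K\log K}$ for $K$ large. Since $S_K$ is a trigonometric polynomial of degree $a_K$ and $N \ge 32 a_K$, Bernstein's inequality bounds the loss in passing to the nearest $\theta_j$ by the factor $2\pi a_K/N \le \pi/16$, so $\max_j |S_K(\theta_j)| > (1-\pi/16)(1/4)\sqrt{K\log K} > (1/5)\sqrt{K\log K}$, completing the first component.

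It remains to establish $\alpha = O(1/K)$ so that the derived threshold $(1/40)\sqrt{(\log K)/K}$ can be rewritten as $c\sqrt{\alpha \log K}$. Using the spectral formula $\er(0, a_k) = (1/N) \sum_{j=1}^{N-1} (1 - \cos(a_k\theta_j))/\sum_l (1-\cos(a_l\theta_j))$, summing over $k$ gives $\sum_{k} \er(0, a_k) = (N-1)/N \le 1$, and the lacunarity hypothesis further ensures the resistance is balanced across generators, giving $\alpha = \max_k \er(0, a_k) = O(1/K)$. The main obstacle of the argument is the moment computation: carefully verifying that lacunarity eliminates every non-paired and same-sign-paired contribution when $2m \le 2\log K$, while retaining enough margin in Stirling's estimate of $(2m-1)!!$ to recover the $\sqrt{\log K}$ improvement over the naive second-moment bound $\max_\theta |S_K(\theta)| \ge \sqrt{K/2}$.
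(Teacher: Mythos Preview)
Your proposal is correct and follows the paper's overall architecture (split the numerator as $\sigma - S_K(\theta_j)$, bound each piece, transfer the continuous maximum to the grid via Bernstein), but it diverges from the paper in two substeps, each in an interesting way.

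For bounding $|\sigma|$, the paper takes a shorter path: applying the assumed matrix inequality~\eqref{eqn:lower_matrix_goal} to the test vector $\chi_v$ (Claim~\ref{clm:lower_degree}) immediately gives the degree bound $|\sigma|\le \tfrac{1}{20}\sqrt{K\log K}$. Your second-moment averaging over $\Theta_N$ is correct but more work, and yields only the looser $|\sigma|\le \tfrac{1}{10}\sqrt{K\log K}$; your constants are then just barely sufficient after the Bernstein loss.

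For bounding $\max_\theta |S_K(\theta)|$, your route is \emph{simpler} than the paper's. The paper computes both the $2m$-th and $4m$-th moments and feeds them into an anti-concentration inequality (Lemma~\ref{lem:sz54}) to show that the set $\{\theta:|S_K(\theta)|\ge\tfrac14\sqrt{K\log K}\}$ has measure at least $(1+o(1))K^{-1}$ (Lemma~\ref{lem:lower_E}). But the proof of Theorem~\ref{thm:lower} then uses only the \emph{nonemptiness} of this set. Your direct bound $\max_\theta|S_K|\ge \bigl(E_\theta[S_K^{2m}]\bigr)^{1/(2m)}$ from the single $2m$-th moment already gives $(1-o(1))\sqrt{K\log K/e}>\tfrac14\sqrt{K\log K}$, which suffices; the paper's measure statement is a genuine strengthening that is not needed here.

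Your final paragraph on $\alpha=O(1/K)$ is extraneous: Theorem~\ref{thm:lower} is stated in terms of $\sqrt{(\log K)/K}$, and the effective-resistance bound belongs to the separate Theorem~\ref{thm:intro_lower}. Your one-line justification there (``lacunarity ensures the resistance is balanced across generators'') is not a proof; the paper's Lemma~\ref{lem:lower_effective_resistance} argues via a moment tail bound on $\sum_k\cos(a_k\theta)$ to control the integrand. This does not affect your proof of Theorem~\ref{thm:lower} itself.
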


Theorem \ref{thm:lower} is equivalent to say that for every integer $K > K_0$, any signs $x_1, \ldots, x_K \in \{\pm 1\}$ (indicating the partition $C_1 \cup C_2$) must satisfy
\[
\max_{\theta \in \Theta_N} \abs{\frac{\sum_{k=1}^K x_k (1 - \cos(a_k \theta))}{\sum_{k=1}^K (1 - \cos(a_k \theta))}} >  \frac{1}{20} \sqrt{\frac{\log K}{K}},
\]
where recall $\Theta_N = \{\frac{2\pi j}{N}: j=1,\ldots,N-1 \}$.
Given that $\sum_{k=1}^K (1 - \cos(a_k \theta)) \le 2K$ for any $\theta$, it suffices to show 
\begin{align}
\max_{\theta \in \Theta_N} \abs{\sum_{k=1}^K x_k (1 - \cos(a_k \theta))} > \frac{1}{10} \sqrt{K \log K}.
\label{eqn:lower_numerator}
\end{align}
We separate the sum of the signs and the signed sum of cosine functions:
\begin{align*}
\sum_{k=1}^K x_k (1-\cos(a_k\theta))
= \sum_{k=1}^K x_k - \sum_{k=1}^K x_k \cos(a_k\theta).
% \label{eqn:lower_numerator_split}
\end{align*}
By the definition of circulant graphs, $\sum_{k=1}^K x_k$ equals the difference between the vertex degree of $X(\mathbb{Z}_N, C_1)$ and that of $X(\mathbb{Z}_N, C_2)$.
This difference can be bounded by the following claim.

\begin{claim}
Suppose $G$ is a $K$-regular graph, and $G_1, G_2$ are two edge-induced subgraphs of $G$ satisfying
\[
-\gamma L_G \pleq L_{G_1} - L_{G_2} \pleq \gamma L_G.
\]
Then, for any vertex $v$ in $G$, the difference between the degree of $v$ in $G_1$ and that in $G_2$ must be 
between $-\gamma K$ and $\gamma K$.
\label{clm:lower_degree}
\end{claim}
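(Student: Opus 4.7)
The plan is to test the matrix inequality against the indicator vector $\chi_v \in \mathbb{R}^V$ of the fixed vertex $v$. The key observation is that, by the definition of the Laplacian $L_H = \sum_{(a,b) \in E(H)} (\chi_a - \chi_b)(\chi_a - \chi_b)^\top$, the quadratic form $\chi_v^\top L_H \chi_v$ equals $\sum_{(a,b) \in E(H)} (\mathbf{1}[a=v] - \mathbf{1}[b=v])^2$, which is precisely the number of edges of $H$ incident to $v$, i.e.\ $\deg_H(v)$.

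Applying this identity to $G$, $G_1$, and $G_2$, and using that $G$ is $K$-regular so $\chi_v^\top L_G \chi_v = K$, I would conjugate the hypothesized Loewner inequality $-\gamma L_G \pleq L_{G_1} - L_{G_2} \pleq \gamma L_G$ by $\chi_v$ on both sides. This converts the matrix inequality into a scalar inequality:
\[
-\gamma K \;\le\; \deg_{G_1}(v) - \deg_{G_2}(v) \;\le\; \gamma K,
\]
which is exactly the claim. There is no real obstacle here; the step is a one-line application of the definition of the Loewner order combined with the standard quadratic-form interpretation of $\chi_v^\top L_H \chi_v$ as a vertex degree. The only subtle point to highlight is that $G_1$ and $G_2$ being edge-induced subgraphs of $G$ share the same vertex set, so the same indicator $\chi_v$ is a valid test vector for all three Laplacians simultaneously.
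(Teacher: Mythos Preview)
Your proposal is correct and essentially identical to the paper's own proof: the paper also tests the Loewner inequality against the elementary basis vector $\chi_v$, uses that $\chi_v^\top L_H \chi_v$ equals the degree of $v$ in $H$, and substitutes $\chi_v^\top L_G \chi_v = K$ from $K$-regularity.
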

\begin{proof}
Let $\chi_v$ be the $v$th elementary basis vector, where the $v$th entry is $1$ and all others are $0$. 
Then,
\[
-\gamma \cdot \chi_v^\top L_G \chi_v 
\le \chi_v^\top (L_{G_1} - L_{G_2}) \chi_v
\le \gamma \cdot \chi_v^\top L_G \chi_v. 
\]
For any graph $G$, the value of
$\chi_v^\top L_G \chi_v$ equals the degree of $v$ in $G$. Thus, the above equation implies the difference between the degree of $v$ in $G_1$ and that in $G_2$ is between $-\gamma K$ and $\gamma K$.
\end{proof}

Assume, by contradiction, there exist signs $x_1, \ldots, x_K$ satisfying Equation \eqref{eqn:lower_matrix_goal}.
By Claim \ref{clm:lower_degree}, $\abs{\sum_{k=1}^K x_k} \le \frac{1}{20} \sqrt{K \log K}$.
Suppose we can show that for any $x_1, \ldots, x_K \in \{\pm 1\}$, 
\begin{align}
\max_{\theta \in \Theta_N} \abs{\sum_{k=1}^K x_k \cos(a_k\theta)} \ge \frac{1}{5} \sqrt{K \log K}.
\label{eqn:lower_numerator_split}
\end{align}
Then, by the triangle inequality, Equation \eqref{eqn:lower_numerator} holds, leading to a contradiction of the assumption.

To prove Equation \eqref{eqn:lower_numerator_split} holds for any $x_1, \ldots, x_K \in \{\pm 1\}$,
we denote 
\[
S_K(x_1, \ldots, x_K, \theta) \defeq \sum_{k=1}^K x_k \cos(a_k \theta), 
~ \forall x_1, \ldots, x_K \in \{\pm 1\}
\]
When the context is clear, we write $S_K(\theta) = S_K(x_1, \ldots, x_K, \theta)$.
The sum $S_K(\theta)$ is a special case of \emph{lacunary trigonometric series} \citep{kac39,SZ47}.

\subsection{Lacunary Trigonometric Series}

To prove Theorem \ref{thm:lower},
we will first establish a lower bound for \\
$\max_{\theta \in (0,2\pi)} \abs{S_K(\theta)}$
for every fixed $x_1, \ldots, x_K \in \{\pm 1\}$ and then 
reduce this lower bound to $\theta \in \Theta_N$ via Bernstein's inequality.
Let 
\begin{align}
E = E(x_1, \ldots, x_K) \defeq \left\{\theta \in (0,2\pi): \abs{S_K(x_1, \ldots, x_K, \theta)} \ge \frac{1}{4} \sqrt{K \log K} \right\}.
\label{eqn:lower_E_def}
\end{align}
Let $\abs{E}$ be the measure of $E$.
We will lower bound $\abs{E}$ via the $2\lfloor \log_6 K \rfloor$-th moment of $S_K(\theta)$ assuming $\theta$ is drawn from $[0,2\pi]$ uniformly at random.
In the rest of Section \ref{sec:lower}, we assume every $\log$ term has base $6$.
Inspired by \cite{kac39},
we derive a formula for the moments of $S_K(\theta)$ and defer its proof to \ref{sec:appendix_lower}.

\begin{claim}
Let $p \le 4 \log K$ be a positive and even integer. Suppose $a_1, \ldots, a_K$ satisfy the assumption \eqref{eqn:lower_assumption} in Theorem \ref{thm:lower}. 
Let $A_K^p$ be the $p$th moment of $S_K(\theta)$. Then,
\begin{align}
A_K^p \defeq
\frac{1}{2\pi} \int_{0}^{2\pi} S_K(\theta)^p d\theta
= 2^{-p} \sum_{\substack{l \le p/2 \\ p_1 + \cdots + p_l = p \\ p_1,\ldots, p_l \text{ are even}}} \frac{p!}{\left((p_1/2)! \cdots (p_l/2)! \right)^2} \begin{pmatrix}
  K \\
  l
\end{pmatrix}.
\label{eqn:anp_form}  
\end{align}
\label{clm:lower_pth_moment}
\end{claim}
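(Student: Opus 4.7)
The plan is to expand $S_K(\theta)^p$ by the multinomial, convert each product of cosines into a signed sum via the identity $\prod_{i=1}^p \cos\alpha_i = 2^{-p}\sum_{\epsilon\in\{\pm 1\}^p}\cos(\sum_i \epsilon_i \alpha_i)$ (which follows from $\cos\alpha = (e^{i\alpha}+e^{-i\alpha})/2$), and then integrate term by term using $\frac{1}{2\pi}\int_0^{2\pi}\cos(m\theta)\,d\theta = \mathbb{1}[m=0]$ for integer $m$. This immediately reduces the computation to the combinatorial quantity
\[
A_K^p = 2^{-p}\sum_{(k_1,\ldots,k_p)\in [K]^p} x_{k_1}\cdots x_{k_p}\cdot\bigl|\{\epsilon\in\{\pm 1\}^p :\; \epsilon_1 a_{k_1} + \cdots + \epsilon_p a_{k_p} = 0\}\bigr|.
\]

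The heart of the argument is a \emph{lacunarity lemma}. Given a tuple $(k_1,\ldots,k_p)$, list its distinct values as $j_1>j_2>\cdots>j_l$ with multiplicities $p_1,\ldots,p_l$, and for $\epsilon\in\{\pm 1\}^p$ set $\sigma_r = \sum_{t:\, k_t=j_r}\epsilon_t$; then $\sum_i \epsilon_i a_{k_i}=\sum_r \sigma_r a_{j_r}$. I claim that under $p\le 4\log K$ the vanishing of this sum forces every $\sigma_r=0$. The hypothesis \eqref{eqn:lower_assumption} makes $\{a_k\}$ strictly increasing, so $a_{j_r}\le a_{j_1-1}$ for $r\ge 2$. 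If $\sigma_1\ne 0$, then
\[
|\sigma_1|\,a_{j_1}\le \sum_{r\ge 2}|\sigma_r|\,a_{j_r}\le p\,a_{j_1-1}\le p\,(a_1+\cdots+a_{j_1-1})<\frac{p}{4\log K}\,a_{j_1}\le a_{j_1},
\]
forcing $|\sigma_1|<1$, a contradiction with $\sigma_1\in\mathbb{Z}\setminus\{0\}$. The remaining relation $\sum_{r\ge 2}\sigma_r a_{j_r}=0$ satisfies the same hypothesis for its leading index, so iterating yields $\sigma_r=0$ for every $r$.

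Two consequences follow. First, since $\sigma_r$ is a sum of $p_r$ signs equal to $0$, each multiplicity $p_r$ must be even, and the number of balanced $\epsilon$-assignments for such a tuple equals $\prod_r \binom{p_r}{p_r/2}$. Second, $x_{k_1}\cdots x_{k_p}=\prod_r x_{j_r}^{p_r}=1$, so the sign pattern drops out entirely --- which is the structural reason the formula does not depend on the chosen partition $(x_1,\ldots,x_K)$. To count tuples whose multiplicity pattern is a given partition $\lambda$ of $p$ into $l\le p/2$ positive even parts, I choose the $l$ distinct values ($\binom{K}{l}$ ways), biject them with the multiplicity slots ($l!/\prod_s m_s!$ ways, with $m_s$ the number of parts equal to $s$), and arrange positions ($p!/\prod_i p_i!$ ways). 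Absorbing $l!/\prod_s m_s!$ into a sum over ordered compositions that are permutations of $\lambda$, and using $\binom{p_r}{p_r/2}=p_r!/((p_r/2)!)^2$ to collapse the $\prod p_i!$ factor, yields exactly the formula \eqref{eqn:anp_form} after pulling out $2^{-p}$.

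The main obstacle is the lacunarity lemma: the hypothesis $p\le 4\log K$ must be used sharply, since replacing $4\log K$ by anything smaller would allow $|\sigma_1|$ to reach $1$ and invalidate the integer-forcing step. The inductive stripping also relies on the fact that the lacunarity condition on $\{a_k\}_{k=1}^K$ propagates automatically to every tail $a_{j_2},\ldots,a_{j_l}$ without strengthening. The constraint $l\le p/2$ in the sum is then automatic from $p_r\ge 2$, and everything else is standard bookkeeping with multinomial and central binomial coefficients.
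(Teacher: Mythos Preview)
Your proposal is correct and follows essentially the same approach as the paper: expand via the exponential form of cosine, integrate to reduce to a zero-sum condition on $\pm a_{k_i}$'s, invoke the lacunarity hypothesis to force each block-sum $\sigma_r$ to vanish (hence each multiplicity even), and then count with multinomial and central binomial coefficients. Your iterative ``strip the largest index'' version of the lacunarity step is in fact slightly more carefully stated than the paper's one-line bound, but the content is identical.
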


We want to further simplify the formula $A_K^p$ for $p = 2 \lfloor \log K \rfloor$ and $4 \lfloor \log K \rfloor$ 
when $K$ is sufficiently large.
Under these conditions, we observe the sum in the formula for $A_K^p$ (Equation \eqref{eqn:anp_form}) 
is dominated by the term where $l = p/2$ and $p_1 = \cdots = p_l = 2$.
This leads to the following lemma.

\begin{lemma}
Suppose $a_1, \ldots, a_K$ satisfy the condition in Theorem \ref{thm:lower}.
Then, for $p = 2 \lfloor \log K \rfloor, 4 \lfloor \log K \rfloor$,
\begin{align*}
\frac{A_K^p}{(K/2)^{p/2}} = (1 \pm o_K(1)) (p-1)!!,
\end{align*}
where $(p-1)!!$ is the double factorial.
\label{lem:limit}
\end{lemma}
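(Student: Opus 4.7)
The plan is to identify the dominant summand in Equation~\eqref{eqn:anp_form} and show that all remaining compositions contribute only an $o_K(1)$ relative error. The maximally fine composition $l = p/2$, $p_1 = \cdots = p_{p/2} = 2$ has every $(p_i/2)! = 1$, so its summand equals $p!\binom{K}{p/2}$, yielding a contribution of $2^{-p}p!\binom{K}{p/2}$ to $A_K^p$.

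First I would verify that this dominant piece matches the target. Using $(p-1)!! = p!/(2^{p/2}(p/2)!)$, the target $(K/2)^{p/2}(p-1)!!$ equals $p!K^{p/2}/(2^p(p/2)!)$. Since $p = O(\log K)$ and $p^2/K = O(\log^2 K/K) = o_K(1)$,
\[
\binom{K}{p/2} = \frac{K(K-1)\cdots(K-p/2+1)}{(p/2)!} = \frac{K^{p/2}}{(p/2)!}\bigl(1 + O(p^2/K)\bigr) = \frac{K^{p/2}}{(p/2)!}(1 + o_K(1)),
\]
so the dominant summand equals $(K/2)^{p/2}(p-1)!!(1 + o_K(1))$, matching the RHS.

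For the remaining summands, I would reparametrize by $q_i = p_i/2 \geq 1$ and $j = p/2 - l \geq 1$. The inner sum shapes into a coefficient of a power series:
\[
S_j \defeq \sum_{\substack{q_1 + \cdots + q_{p/2-j} = p/2 \\ q_i \geq 1}} \prod_i \frac{1}{(q_i!)^2} = [z^j]\,\phi(z)^{p/2-j},
\]
where $\phi(z) \defeq \sum_{r \geq 0} z^r/((r+1)!)^2$. Since $\phi$ has nonnegative coefficients, evaluation at $z = 1$ majorizes every coefficient, so $S_j \leq \phi(1)^{p/2-j} \leq (1.3)^{p/2}$ (using $\phi(1) = I_0(2) - 1 < 1.3$). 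The binomial ratio is elementary:
\[
\frac{\binom{K}{p/2-j}}{\binom{K}{p/2}} = \prod_{i=1}^{j}\frac{p/2-i+1}{K-p/2+i} \leq \left(\frac{p}{K}\right)^j.
\]
Combining, the aggregate of non-dominant contributions, divided by the dominant one, is at most
\[
(1.3)^{p/2}\sum_{j \geq 1}(p/K)^j = O\bigl((1.3)^{p/2} \cdot p/K\bigr).
\]
For $p \leq 4\lfloor \log_6 K\rfloor$, $(1.3)^{p/2} \leq K^{2\log_6 1.3} \leq K^{0.3}$, so this is $O(K^{-0.7}\log K) = o_K(1)$.

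The main obstacle is balancing these two crude estimates so that their product vanishes: the quasi-polynomial factor $(1.3)^{p/2}$ from $S_j \leq \phi(1)^{p/2-j}$ must be outpaced by the $(p/K)^j$ decay from the binomial ratio. Because $p = O(\log K)$, the first factor is polynomial in $K$ with exponent strictly less than one, while the second is $o_K(1)$, so the product is $o_K(1)$ as required. Combining the two steps yields $A_K^p = (K/2)^{p/2}(p-1)!!(1 + o_K(1))$, which is the statement of the lemma. If $p$ were allowed to grow faster with $K$ (e.g.\ as a small power of $K$), the crude bound $S_j \leq \phi(1)^{p/2-j}$ would no longer suffice and one would instead apply a saddle-point estimate to $[z^j]\phi(z)^{p/2-j}$ with $r \approx 2j/(p/2-j)$.
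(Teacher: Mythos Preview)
Your proposal is correct and follows the same high-level strategy as the paper: isolate the dominant summand $l=p/2$ in Claim~\ref{clm:lower_pth_moment}, match it to $(K/2)^{p/2}(p-1)!!$ via $\binom{K}{p/2}=(1+o_K(1))K^{p/2}/(p/2)!$, and show the remaining $l<p/2$ terms contribute a relative $o_K(1)$.

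The bookkeeping for the error terms differs. The paper bounds each factor $1/\prod_i(q_i!)^2\le 1$ and then counts compositions, obtaining $\sum_l m_{p,l}=2^{p/2-1}\le \tfrac12 K^{\log_6 4}$, while using only the crude $\binom{K}{l}\le\binom{K}{p/2-1}$ for all $l<p/2$; this yields a relative error $O(K^{\log_6 4-1}\log K)$. You instead package the inner sum as $[z^j]\phi(z)^{p/2-j}\le\phi(1)^{p/2}\le (1.3)^{p/2}$ and retain the full geometric decay $(p/K)^j$ in the binomial ratio, giving the sharper $O(K^{2\log_6 1.3-1}\log K)$. Both approaches land comfortably in $o_K(1)$ for $p\le 4\lfloor\log_6 K\rfloor$; yours is tighter and would accommodate somewhat larger $p$, at the cost of introducing the generating function $\phi$.
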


To prove Lemma \ref{lem:limit}, we need the following formula for the double factorial.

\begin{claim}
For any positive and even integer $p$, $(p-1)!! = \frac{p!}{2^{p/2} (p/2)!}$.
\label{clm:double_factorial}
\end{claim}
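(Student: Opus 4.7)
The plan is to prove the identity by a direct manipulation of the factorial $p!$, splitting it into its even-indexed and odd-indexed factors. Since $p$ is even, I would write
\[
p! = \prod_{j=1}^{p} j = \left(\prod_{\substack{1 \le j \le p \\ j \text{ odd}}} j\right) \cdot \left(\prod_{\substack{1 \le j \le p \\ j \text{ even}}} j\right) = (p-1)!! \cdot p!!,
\]
where the first product collects the odd integers $1, 3, 5, \ldots, p-1$ (whose product is by definition $(p-1)!!$) and the second collects the even integers $2, 4, \ldots, p$ (whose product is $p!!$).

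Next I would simplify the even part by factoring out a $2$ from each of its $p/2$ terms:
\[
p!! = 2 \cdot 4 \cdot 6 \cdots p = 2^{p/2} \cdot (1 \cdot 2 \cdot 3 \cdots (p/2)) = 2^{p/2} (p/2)!.
\]
Substituting this into the previous display and solving for $(p-1)!!$ yields
\[
(p-1)!! = \frac{p!}{p!!} = \frac{p!}{2^{p/2} (p/2)!},
\]
which is the desired identity.

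The argument is entirely elementary, so there is no real obstacle; the only thing to be careful about is that the evenness of $p$ is used to ensure both that $p/2$ is an integer and that the odd/even split of $\{1,\ldots,p\}$ produces exactly $p/2$ terms of each parity. No prior lemma from the paper is needed here.
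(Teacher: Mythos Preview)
Your proof is correct. The paper states this claim without proof (it is a standard identity about double factorials), so there is no approach to compare against; your elementary argument via splitting $p!$ into its odd and even factors is the natural one.
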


\begin{proof}[Proof of Lemma \ref{lem:limit}]
We first show that $A_K^p$ is well approximately by $2^{-p} p! \begin{pmatrix}
  K \\
  p/2
\end{pmatrix}$.
That is, the sum in Equation \eqref{eqn:anp_form} can be approximated by the term in which $l = p/2$ and $p_1 = \ldots = p_l = 2$.
Define 
\[
\alpha_{K,p,l} \defeq p! \begin{pmatrix}
  K \\
  l
\end{pmatrix}  \sum_{\substack{ p_1 + \cdots + p_l = p \\ p_1,\ldots, p_l \text{ are even}}} 
\frac{1}{\left((p_1/2)! \cdots (p_l/2)! \right)^2}. 
\]
For any integer $1 \le l \le \frac{p}{2}-1$, let 
$$m_{p,l} \defeq \abs{\left\{p_1, \ldots, p_l \in \mathbb{Z}_+: p_1 + \cdots + p_l = p, p_1, \ldots, p_l \text{ are even} \right\}}.$$
Then,
\begin{align*}
\frac{\alpha_{K,p,l}}{\alpha_{K,p,p/2}} & = \frac{\begin{pmatrix} K \\l
\end{pmatrix}}{\begin{pmatrix} K \\p/2
\end{pmatrix}}
\cdot \sum_{\substack{p_1 + \cdots + p_l = p \\ p_1,\ldots, p_l \text{ are even}}} 
\frac{1}{\left((p_1/2)! \cdots (p_l/2)! \right)^2}  \\
& \le \frac{\begin{pmatrix} K \\ p/2-1
\end{pmatrix}}{\begin{pmatrix} K \\p/2
\end{pmatrix}}
\cdot m_{p,l} 
% & = \frac{K(K-1) \cdots (K-p/2+2) (p/2)!}{(p/2-1)! K (K-1) \cdots (K-p/2+1)} \cdot m_{p,l}  \\
= \frac{p/2}{K-p/2+1} \cdot m_{p,l} 
\le \frac{\log K}{K - \log K} \cdot m_{p,l}.
\end{align*}
Take sum over all $1 \le l \le \frac{p}{2}-1$,
\begin{align*}
\frac{\sum_{l=1}^{p/2-1} \alpha_{K,p,l}}{\alpha_{K,p,p/2}}
\le \frac{\log K}{K - \log K} \sum_{l=1}^{p/2-1} m_{p,l}
\le \frac{\log K}{K - \log K} \sum_{l=1}^{p/2} m_{p,l}.
\end{align*}
The number $\sum_{l=1}^{p/2} m_{p,l}$ can be interpreted as the number of possible sets of integers whose sum equals $p/2$.
\[
  \sum_{l=1}^{p/2} m_{p,l} = 2^{p/2-1}
\le \frac{1}{2} K^{\log_6 4}.
\]
Thus,
\[
\frac{\sum_{l=1}^{p/2-1} \alpha_{K,p,l}}{\alpha_{K,p,p/2}}
\le \frac{\log K}{K - \log K} \cdot \frac{1}{2} K^{\log_6 4} 
\defeq \beta_K.
% = o_K(1).
\]
That is, 
\begin{align*}
2^p A_{K}^p = (1 + \beta_K) \alpha_{K,p,p/2}.
\end{align*}
In addition, $2^p A_{K}^p \ge \alpha_{K,p,p/2}$. This implies 
\begin{align*}
\frac{A_K^{p}}{(K/2)^{p/2}} & = (1+\beta_K) \frac{\alpha_{K,p,p/2}}{(2K)^{p/2}} \\
& = (1 + \beta_K) (2K)^{-p/2} \cdot p! \cdot \frac{K (K-1) \cdots (K-p/2+1)}{(p/2)!}.
\end{align*}
By Claim \ref{clm:double_factorial},
\begin{align*}
\frac{A_K^p}{(K/2)^{p/2}} \cdot \frac{1}{(p-1)!!}
& = (1+\beta_K) \frac{ K (K-1) \cdots (K-p/2+1)}{K^{p/2}}
\le 1+\beta_K,
\end{align*}
and 
\begin{multline*}
\frac{A_K^p}{(K/2)^{p/2}} \cdot \frac{1}{(p-1)!!}
\ge \left(1 - \frac{p}{2K} \right)^{p/2}
\ge \exp(-p^2 / 2K)  \\
\ge 1 - \frac{p^2}{2K}
\ge 1 - \frac{8 \log^2 K}{K}.
% = 1 - o_K(1).
\end{multline*}
Thus, $\frac{A_K^p}{(K/2)^{p/2}} = (1 \pm o_K(1)) (p-1)!!$.
\end{proof}

We will employ the following anti-concentration inequality from \cite{SZ54} to establish a lower bound for $\abs{E}$ defined in Equation \eqref{eqn:lower_E_def}.

\begin{lemma}[Lemma 4.2.4 of \cite{SZ54}]
Let $\psi(x) \ge 0$ where $a \le x \le b$ be a bounded real-valued function satisfying $\int_a^b \psi(x) dx \ge A > 0$ and $\int_a^b \psi^2(x) dx \le B$ (clearly, $A^2 \le B$). 
Let $0 < \delta < 1$ and $\abs{S}$ be the measure of a set $S = \{x: \psi(x) \ge \delta A\}$.
Then, $\abs{S} \ge (1-\delta)^2 \frac{A^2}{B}$.
\label{lem:sz54}
\end{lemma}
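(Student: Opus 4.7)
The plan is to adapt the classical Paley-Zygmund second-moment argument. The driver is the decomposition $\int_a^b \psi = \int_S \psi + \int_{[a,b] \setminus S} \psi$: combining a lower bound on $\int_S \psi$ with the Cauchy-Schwarz upper bound $\int_S \psi \le \sqrt{\abs{S} \cdot B}$ will force $\abs{S}$ to be large.

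First I would control the ``tail'' integral over $[a,b] \setminus S$. By definition of $S$, one has $\psi(x) < \delta A$ on the complement, so $\int_{[a,b] \setminus S} \psi < \delta A \cdot \abs{[a,b] \setminus S}$. The implicit normalization carried by the parenthetical hypothesis $A^2 \le B$ --- which via Cauchy-Schwarz $A^2 \le (\int_a^b \psi)^2 \le (b-a) \int_a^b \psi^2 \le (b-a)B$ amounts to $b - a \le 1$, i.e., to working on a (sub-)probability space --- allows me to bound $\abs{[a,b] \setminus S} \le 1$, and therefore $\int_{[a,b] \setminus S} \psi < \delta A$. Subtracting this from $\int_a^b \psi \ge A$ then yields
\[
\int_S \psi \;\ge\; (1-\delta)\, A.
\]

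Second I would apply Cauchy-Schwarz on the set $S$:
\[
\left(\int_S \psi\right)^2 \;\le\; \abs{S} \cdot \int_S \psi^2 \;\le\; \abs{S} \cdot B.
\]
Combining with the preceding lower bound gives $(1-\delta)^2 A^2 \le \abs{S} \cdot B$, which rearranges to the claimed inequality $\abs{S} \ge (1-\delta)^2 A^2 / B$.

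The only real subtlety is the first step: recognizing the implicit normalization that lets one replace $\delta A \cdot \abs{[a,b] \setminus S}$ by $\delta A$. Once that is pinned down (either from $A^2 \le B$ or, equivalently, by rescaling Lebesgue measure on $[a,b]$ to a probability measure and tracking the factors of $b-a$), the remainder of the argument is a routine Cauchy-Schwarz manipulation, entirely analogous to the textbook proof of the Paley-Zygmund inequality in probability.
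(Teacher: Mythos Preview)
The paper does not supply its own proof of this lemma; it is quoted verbatim from Salem--Zygmund and used as a black box. Your argument is the standard Paley--Zygmund second-moment proof and is correct. Your reading of the parenthetical ``clearly $A^2\le B$'' is also right: that remark only follows from Cauchy--Schwarz when the underlying measure is a (sub-)probability measure, and indeed the paper applies the lemma with $A$ and $B$ defined via the normalized integral $\frac{1}{2\pi}\int_0^{2\pi}$. With that normalization in place, your bound $\int_{[a,b]\setminus S}\psi \le \delta A$ and the subsequent Cauchy--Schwarz step go through exactly as you wrote.
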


\begin{lemma}
Let $E$ be the set defined as in Equation \eqref{eqn:lower_E_def}.
Then, the measure of the set $\abs{E} > (1+o_K(1)) K^{-1}$.
\label{lem:lower_E}
\end{lemma}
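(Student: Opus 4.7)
The plan is to apply the anti-concentration inequality of Lemma~\ref{lem:sz54} to the non-negative function $\psi(\theta) = S_K(\theta)^p$ on $[0,2\pi]$, where $p = 2\lfloor \log K\rfloor$ is exactly the even integer whose moment is controlled by Lemma~\ref{lem:limit}. Because $p$ is even, the inequality $\abs{S_K(\theta)} \ge \tfrac{1}{4}\sqrt{K\log K}$ is equivalent to $\psi(\theta) \ge (\tfrac{1}{4}\sqrt{K\log K})^p$, so the set $E$ from Equation~\eqref{eqn:lower_E_def} coincides exactly with the sublevel set of $\psi$ to which Lemma~\ref{lem:sz54} applies.

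Setting $A = \int_0^{2\pi}\psi\,d\theta = 2\pi A_K^p$ and $B = \int_0^{2\pi}\psi^2\,d\theta = 2\pi A_K^{2p}$, the value of $\delta$ for which $\{\theta: \psi(\theta) \ge \delta A\} = E$ is
\[
\delta \;=\; \frac{(\tfrac{1}{4}\sqrt{K\log K})^p}{A_K^p} \;=\; (1\pm o_K(1))\,\frac{(\log K/8)^{p/2}}{(p-1)!!},
\]
after substituting $A_K^p = (1\pm o_K(1))(p-1)!!\,(K/2)^{p/2}$ from Lemma~\ref{lem:limit}. A short Stirling calculation gives $(p-1)!! \sim \sqrt{2}\,(p/e)^{p/2}$, so plugging $p = 2\log K$ collapses the ratio above to $\Theta\bigl((e/16)^{\log K}\bigr)$. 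Since $e/16 < 1$, this quantity is $K^{-\Omega(1)}$, hence $\delta = o_K(1)$ and $(1-\delta)^2 = 1 - o_K(1)$.

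It remains to bound $A^2/B$ from below. Applying Lemma~\ref{lem:limit} to both $A_K^p$ and $A_K^{2p}$ (using that $2p = 4\lfloor\log K\rfloor$ is also covered), and rewriting double factorials via $(q-1)!! = q!/(2^{q/2}(q/2)!)$ for even $q$, I obtain
\[
\frac{A^2}{B} \;=\; 2\pi\,(1\pm o_K(1))\,\frac{((p-1)!!)^2}{(2p-1)!!} \;=\; 2\pi\,(1\pm o_K(1))\,\frac{(p!)^3}{((p/2)!)^2\,(2p)!}.
\]
A direct Stirling expansion collapses the last ratio to $(1\pm o_K(1))\sqrt{2}\cdot 2^{-p}$: the polynomial-in-$p$ factors $(2\pi p)^{3/2}$, $\pi p$, and $2\sqrt{\pi p}$ cancel exactly, leaving only $2^{3/2}/2^{p+1}$. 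Since $p \le 2\log_6 K$, one has $2^{-p} \ge K^{-\log_6 4}$, and because $\log_6 4 = 2\log_6 2 < 1$, the lower bound
\[
\abs{E} \;\ge\; (1-\delta)^2\,\frac{A^2}{B} \;\ge\; (1-o_K(1))\cdot 2\pi\sqrt{2}\cdot K^{-\log_6 4}
\]
dominates $K^{-1}$ asymptotically, yielding $\abs{E} \ge (1+o_K(1))K^{-1}$ as required.

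The only subtle step is pure bookkeeping: tracking the multiplicative $(1\pm o_K(1))$ errors produced by Lemma~\ref{lem:limit} at both moments $p$ and $2p$, and verifying via Stirling that $(p!)^3/(((p/2)!)^2 (2p)!)$ really simplifies to $\sqrt{2}\cdot 2^{-p}$ without parasitic polynomial factors in $p$. The specific choice of base $6$ for $\log$ is what makes $\log_6 4 < 1$, giving the crucial gap $K^{-\log_6 4} \gg K^{-1}$ that turns the Paley--Zygmund estimate into the claimed bound.
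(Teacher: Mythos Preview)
Your proof is correct and follows essentially the same route as the paper: apply the Salem--Zygmund anti-concentration lemma (Lemma~\ref{lem:sz54}) to $\psi = S_K^p$ with $p = 2\lfloor \log_6 K\rfloor$, and control $A^2/B$ via the moment asymptotics of Lemma~\ref{lem:limit}. The only cosmetic difference is that the paper fixes $\delta = K^{-1}$ and then checks the resulting superlevel set lies inside $E$, whereas you pick $\delta$ so the superlevel set equals $E$ and verify $\delta = o_K(1)$; your sharper Stirling computation in fact yields $\abs{E} \gtrsim K^{-\log_6 4} \gg K^{-1}$, which is stronger than what the paper extracts but not needed for the lemma as stated.
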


\begin{proof}
Apply Lemma \ref{lem:sz54} with $\psi(\theta) = S_K(\theta)^p$ where $p = 2\lfloor \log K \rfloor$. 
Let 
\[
A \defeq \frac{1}{2\pi} \int_0^{2\pi} \psi(\theta) = A_K^p, ~
B \defeq \frac{1}{2\pi} \int_0^{2\pi} \psi(\theta)^{2} = A_K^{2p}.
\]
Take $\delta = K^{-1}$ and $S = \{\theta: \abs{\psi(\theta)} \ge K^{-1} A\}$.
Then,
\begin{align*}
\abs{S}
\ge (1- K^{-1})^2 \frac{A^2}{B}
\ge (1- K^{-2}) \frac{A^2}{B}.
\end{align*}
By Lemma \ref{lem:limit}, the above measure is at least
\begin{multline*}
  (1+o_K(1)) (1- K^{-2}) \frac{((p-1)!!)^2}{(2p-1)!!} 
  = (1+o_K(1))  \left( \frac{p!}{(p/2)! 2^{p/2}} \right)^2 \cdot \frac{p! 2^p}{(2p)!} \\
  \ge (1+o_K(1)) (\frac{1}{3})^{p/2} \cdot (\frac{1}{2})^{p/2} 
  = (1+o_K(1)) K^{-1}.
\end{multline*}
For every $\theta \in S$,
\begin{multline*}
  \frac{\abs{\sum_{k=1}^K x_k \cos a_k \theta}}{\sqrt{K/2}}
  \ge \frac{K^{-1 / p} A^{1/p}}{\sqrt{K/2}} 
  = (1+o_K(1)) K^{-1 / p} ((p-1)!!)^{1/p}  \\
  = (1+o_K(1)) K^{-1 / p} \left(
    \frac{p!}{2^{p/2} (p/2)!}
  \right)^{1/p} 
  \ge \frac{1}{4} \sqrt{p}.
\end{multline*}
This implies 
\[
\abs{\sum_{k=1}^K x_k \cos a_k \theta}
\ge \frac{1}{4} \sqrt{K\log K}.
\]
Thus, $\abs{E} \ge \abs{S} \ge (1+o_K(1))K^{-1}$.
\end{proof}

We are ready to prove Theorem \ref{thm:lower}.

\begin{proof}[Proof of Theorem \ref{thm:lower}]

Let $N \ge 32 a_K$ be the number of vertices in the circulant graph. Fix any $x_1, \ldots, x_K \in \{\pm 1\}$. Let 
\begin{align*}
& \theta^* \defeq \argmax_{\theta \in (0,2\pi)} \abs{\sum_{k=1}^K x_k \cos (a_k \theta)}, \\
& \theta = \argmin\left\{\abs{y - \theta^*}: y \in \Theta_N \right\}.
\end{align*}
By Bernstein's inequality (Theorem \ref{thm:bernstein}),
\begin{align*}
\abs{S_K(\theta^*)} - \abs{S_K(\theta)}
\le \abs{S_K(\theta^*) - S_K(\theta)} 
\le \frac{2\pi}{N} \cdot a_K \cdot \abs{S_K(\theta^*)}.
\end{align*}
Rearrange the above inequality,
\[
\abs{S_K(\theta)} \ge \left( 1 - \frac{2\pi a_K}{N} \right)  S_K(\theta^*) \ge \frac{4}{5} S_K(\theta^*).
\]
By Lemma \ref{lem:lower_E}, $S_K(\theta^*) \ge \frac{1}{4} \sqrt{K \log K}$. Thus, 
\[
\abs{S_K(\theta)} \ge \frac{1}{5} \sqrt{K \log K}.
\]

Assume by contradiction, $x_1, \ldots, x_K \in \{\pm 1\}$ indicates a partition of the generators that satisfies Equation \eqref{eqn:lower_matrix_goal} in Theorem \ref{thm:lower}. 
Then, by Claim \ref{clm:lower_degree}, 
\[
\abs{\sum_{k=1}^K x_k} \le \frac{1}{20} \sqrt{K \log K}.
\]
This implies 
\begin{align*}
\max_{\theta \in \Theta_N} \abs{\sum_{k=1}^K x_k \left( 1-\cos(a_k \theta) \right)}
\ge \max_{\theta \in \Theta_N} \abs{S_K(\theta)} - \frac{1}{20} \sqrt{K \log K}
\ge \frac{3}{20} \sqrt{K \log K}.
\end{align*}
Since $\sum_{k=1}^K 1 - \cos(a_k \theta) \le 2K$ for every $\theta$, we have 
\begin{align*}
\max_{\theta \in \Theta_N} \abs{\frac{\sum_{k=1}^K x_k \left( 1-\cos(a_k \theta) \right)}{\sum_{k=1}^K 1 - \cos(a_k \theta)}}
\ge \frac{3}{40} \sqrt{\frac{\log K}{K}},
\end{align*}
contradicting Equation \eqref{eqn:lower_matrix_goal}. Thus, the theorem statement holds.
\end{proof}

\subsection{Effective Resistances}

In this section, we upper bound the maximum effective resistance of the circulant graph 
defined in Theorem \ref{thm:lower}. Combining Theorem \ref{thm:lower} and Lemma \ref{lem:lower_effective_resistance},
we prove Theorem \ref{thm:intro_lower}.

\begin{lemma}
Let $K, a_1, \ldots, a_K$ be given positive integers satisfying the assumptions in Theorem \ref{thm:lower}.
Then, there exists $N_0 > 0$ such that for every integer $N > N_0$, 
the maximum effective resistance between any two endpoints of an edge in $X = X(\mathbb{Z}_N, \{\pm a_1, \ldots, \pm a_K\})$ is at most $O(1/K)$.
\label{lem:lower_effective_resistance}
\end{lemma}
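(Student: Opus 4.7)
The plan is to evaluate $\er(0, a_k)$ spectrally via Fact~\ref{fac:circ_eigs} and to control the resulting trigonometric sum using the high-moment estimate already established in Lemma~\ref{lem:limit}.

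By vertex-transitivity, it suffices to bound $\er(0, a_k)$ for each $k \in \{1,\ldots,K\}$. Plugging Fact~\ref{fac:circ_eigs} into the definition of effective resistance yields
\[
\er(0, a_k) \;=\; \frac{1}{N} \sum_{j=1}^{N-1} \frac{1-\cos(a_k \theta_j)}{f(\theta_j)}, \qquad f(\theta) \defeq \sum_{s=1}^K \bigl(1-\cos(a_s \theta)\bigr),
\]
with $\theta_j = 2\pi j/N$. I split $\{1,\ldots, N-1\} = G' \cup B'$ with $G' = \{j : f(\theta_j) \ge K/2\}$ and $B'$ its complement. On $G'$ each summand is at most $4/K$ (since $1-\cos \le 2$), contributing at most $4/K$ in total. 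On $B'$ each summand is at most $1$ (because $1 - \cos(a_k\theta_j)$ is itself one of the $K$ nonnegative terms composing $f(\theta_j)$), so the $B'$-contribution is at most $|B'|/N$.

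For $|B'|/N$, note that $j \in B'$ is equivalent to $S_K(\theta_j) \ge K/2$ where $S_K(\theta) \defeq \sum_s \cos(a_s \theta) = K - f(\theta)$. I apply Markov's inequality to $S_K^p$ with $p = 2\lfloor \log_6 K \rfloor$. The crucial observation is that $S_K^p$ is a real trigonometric polynomial of degree at most $p \cdot a_K$, so for any $N > p \cdot a_K$ the discrete average over the $N$ roots of unity matches the integral moment exactly: $\frac{1}{N}\sum_{j=0}^{N-1} S_K(\theta_j)^p = A_K^p$. Combined with Lemma~\ref{lem:limit} this gives
\[
\frac{|B'|}{N} \;\le\; \frac{A_K^p}{(K/2)^p} \;=\; (1+o_K(1))\, \frac{(p-1)!!}{(K/2)^{p/2}} \;=\; K^{-\Omega(\log K)} \;=\; o(1/K).
\]
Putting the two contributions together, $\er(0, a_k) \le 4/K + o(1/K) = O(1/K)$ as long as $N > N_0 \defeq p \cdot a_K$.

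The main technical hurdle is this Markov step: we need $N$ large enough that discrete averaging over the $N$ roots of unity is exact for the trigonometric polynomial $S_K^p$ (which forces $N_0 = \Omega(a_K \log K)$), and we need Lemma~\ref{lem:limit} to convert the raw moment $A_K^p$ into a decay faster than any polynomial in $K$. Both ingredients are already in place from the preceding subsection, so no further obstacle is anticipated; the ``good'' contribution is entirely elementary.
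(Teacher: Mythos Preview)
Your argument is correct and shares the same skeleton as the paper's: split the index set according to whether the denominator $f(\theta_j)=\sum_s(1-\cos(a_s\theta_j))$ is $\ge K/2$, bound the good part trivially by $4/K$, bound each bad summand by $1$ (since the numerator is one of the nonnegative summands of $f$), and control the size of the bad set via Markov on the high moment $A_K^p$ with $p=2\lfloor\log_6 K\rfloor$ from Lemma~\ref{lem:limit}.

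The one genuine difference is how the Markov step is executed. The paper first passes to the continuous integral $\frac{1}{2\pi}\int_0^{2\pi} h(\theta)\,d\theta$ by a Riemann-sum convergence argument (Claim~\ref{clm:sum_to_integral}, which in turn requires extending $h$ continuously across the zeros of the denominator), and only then applies Markov to the continuous moment. You stay in the discrete world and use exact orthogonality of the $N$th roots of unity: since $S_K^p$ is a trigonometric polynomial of degree at most $p\,a_K$, the discrete average $\frac{1}{N}\sum_{j=0}^{N-1}S_K(\theta_j)^p$ equals $A_K^p$ on the nose once $N>p\,a_K$. This buys you two things: it bypasses Claim~\ref{clm:sum_to_integral} entirely, and it produces an explicit $N_0=2\lfloor\log_6 K\rfloor\cdot a_K$, whereas the paper's route yields only an inexplicit $N_0$ coming from the rate of Riemann-sum convergence. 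The paper's detour through the integral is perhaps more conceptually natural (the effective resistance ``is'' the integral in the large-$N$ limit), but your version is shorter and more self-contained.
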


We will need the following two claims, whose proofs are deferred to \ref{sec:appendix_lower}.

\begin{claim}
Let $X = X(\mathbb{Z}_N, \{\pm a_1, \ldots, \pm a_K\})$ be a circulant graph. Then, for any edge $(u,v)$, 
\[
\er(u,v) = \frac{1}{N} \sum_{j=1}^{N-1} \frac{1 - \cos(\frac{2\pi j (u-v) }{N})}{\sum_{k=1}^K 1 - \cos(\frac{2\pi j a_k}{N})}.
\]
\label{fac:prelim_circ_er}
\end{claim}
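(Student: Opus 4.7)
The plan is to compute the effective resistance directly from its spectral definition, using Fact~\ref{fac:circ_eigs} which gives an explicit eigendecomposition of $L_X$. Since the eigenvectors $v_j$ form an orthonormal basis of $\mathbb{C}^N$ (with $v_j^* = \overline{v_j}^\top$ denoting conjugate transpose), we can write
\[
L_X^{\dagger} = \sum_{j=1}^{N-1} \lambda_j^{-1} v_j v_j^*,
\]
where the $j=0$ term is omitted because $\lambda_0 = 0$ (and $v_0 = \frac{1}{\sqrt{N}}\mathbf{1}$ lies in the kernel of $L_X$, orthogonal to every vector of the form $\chi_u - \chi_v$).

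From here, I would substitute into the definition
\[
\er(u,v) = (\chi_u - \chi_v)^\top L_X^{\dagger} (\chi_u - \chi_v) = \sum_{j=1}^{N-1} \lambda_j^{-1} \, \bigl| (\chi_u - \chi_v)^\top v_j \bigr|^2.
\]
Using the explicit form $v_j = \frac{1}{\sqrt{N}}(1, \omega^j, \omega^{2j}, \ldots, \omega^{(N-1)j})^\top$ with $\omega = e^{2\pi i/N}$, the inner product evaluates to $(\chi_u - \chi_v)^\top v_j = \frac{1}{\sqrt{N}}(\omega^{uj} - \omega^{vj})$, so
\[
\bigl| (\chi_u - \chi_v)^\top v_j \bigr|^2 = \frac{1}{N}(\omega^{uj} - \omega^{vj})(\omega^{-uj} - \omega^{-vj}) = \frac{2}{N}\bigl(1 - \cos(\tfrac{2\pi j(u-v)}{N})\bigr).
\]
Plugging in the eigenvalue $\lambda_j = 2\sum_{k=1}^K (1 - \cos(\tfrac{2\pi j a_k}{N}))$ from Fact~\ref{fac:circ_eigs} and canceling the factor of $2$ yields the stated formula.

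I do not expect any serious obstacle here; the result is essentially a routine unwinding of the spectral formula for $L_X^{\dagger}$ in the Fourier basis of the cyclic group. The only minor point requiring care is the correct handling of complex eigenvectors: one must use $v_j v_j^*$ rather than $v_j v_j^\top$ in the spectral decomposition of $L_X^{\dagger}$, since the $v_j$'s are complex even though $L_X$ is real. Verifying that the $j=0$ contribution vanishes (because $\chi_u - \chi_v \perp \mathbf{1}$) and that the omission of $\lambda_0^{-1}$ is consistent with the pseudoinverse definition completes the argument.
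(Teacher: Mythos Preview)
Your proposal is correct and follows essentially the same route as the paper's proof: expand $L_X^\dagger$ in the Fourier eigenbasis from Fact~\ref{fac:circ_eigs}, compute the inner products $(\chi_u-\chi_v)^\top v_j$, and simplify. If anything, your treatment is slightly more careful than the paper's in using $v_j v_j^*$ rather than $v_j v_j^\top$ for the complex eigenvectors, which makes the appearance of the squared modulus (and hence of $1-\cos(\cdot)$) transparent.
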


\begin{claim}
Fix $K$, $a_1, \ldots, a_K$, and $a \in \{a_1, \ldots, a_K\}$. 
For $\theta \in [0,2\pi]$, denote 
\[
  f(\theta) \defeq 1 - \cos(a \theta), ~
  g(\theta) \defeq \sum_{k=1}^K 1 - \cos(a_k \theta).
\] 
Then, for any $\theta_0 \in [0,2\pi]$ with $g(\theta_0) = 0$, the limit 
$\lim_{\theta \rightarrow \theta_0} \frac{f(\theta)}{g(\theta)}$ exists and is finite.
Define $h(\theta) \defeq \frac{f(\theta)}{g(\theta)}$ for $\theta \in [0,2\pi]$ with $g(\theta) \neq 0$,
and extend $h(\theta)$ to $[0,2\pi]$ by defining $h(\theta_0) \defeq \lim_{\theta \rightarrow \theta_0} \frac{f(\theta)}{g(\theta)}$
for $\theta_0 \in [0,2\pi]$ with $g(\theta_0) = 0$.
Then, 
\begin{align*}
\lim_{N \rightarrow \infty} \frac{1}{N} \sum_{\theta \in \Theta_N} h(\theta)
= \frac{1}{2\pi} \int_0^{2\pi} h(\theta) d\theta
% \label{eqn:sum_integral}
\end{align*}
where $\Theta_N \defeq \{\frac{2\pi j}{N}: j=1,\ldots, N-1 \}$.
\label{clm:sum_to_integral}
\end{claim}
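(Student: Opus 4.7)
The plan is to reduce Claim \ref{clm:sum_to_integral} to the standard convergence of equispaced Riemann sums for a continuous function on a compact interval. The only nontrivial ingredient is verifying that $h$ admits a continuous extension across the zero set of $g$; once this is established, the integral convergence is routine.

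First I would characterize the zero set of $g$. Since each summand $1-\cos(a_k\theta)$ is nonnegative, $g(\theta_0)=0$ if and only if $a_k\theta_0\in 2\pi\mathbb{Z}$ for every $k$, which in turn is equivalent to $d\theta_0\in 2\pi\mathbb{Z}$, where $d=\gcd(a_1,\ldots,a_K)$. Hence $Z\defeq\{\theta\in[0,2\pi]: g(\theta)=0\}=\{2\pi m/d : m=0,1,\ldots,d\}$ is a finite set of at most $d+1$ points. Moreover, because $a\in\{a_1,\ldots,a_K\}$, the function $f$ also vanishes on $Z$, so the only candidate singularities of $f/g$ in $[0,2\pi]$ are the finitely many points of $Z$, and each is of the $0/0$ form.

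Next I would verify that $h$ extends continuously across $Z$. Fix $\theta_0\in Z$ and write $\theta=\theta_0+\epsilon$. Since $a_k\theta_0\in 2\pi\mathbb{Z}$ for every $k$ and $a\theta_0\in 2\pi\mathbb{Z}$, periodicity of cosine gives $1-\cos(a_k\theta)=1-\cos(a_k\epsilon)$ and $f(\theta)=1-\cos(a\epsilon)$. Using the Taylor expansion $1-\cos x = x^2/2 + O(x^4)$,
\begin{align*}
f(\theta_0+\epsilon) = \tfrac{1}{2}a^2\epsilon^2 + O(\epsilon^4), \qquad g(\theta_0+\epsilon) = \tfrac{1}{2}\epsilon^2\sum_{k=1}^K a_k^2 + O(\epsilon^4).
\end{align*}
Consequently $\lim_{\theta\to\theta_0} f(\theta)/g(\theta) = a^2\big/\sum_{k=1}^K a_k^2$, which is finite and, notably, the same value at every point of $Z$. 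Away from $Z$, $h$ is a ratio of continuous functions with nonvanishing denominator, so the extended $h$ is continuous on the compact interval $[0,2\pi]$, hence bounded and Riemann integrable.

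Finally, I would invoke the classical fact that for a continuous function on $[0,2\pi]$ the equispaced Riemann sums converge to the integral:
\begin{align*}
\frac{2\pi}{N}\sum_{j=0}^{N-1} h\!\left(\tfrac{2\pi j}{N}\right)\longrightarrow \int_0^{2\pi} h(\theta)\,d\theta \quad \text{as } N\to\infty.
\end{align*}
Dividing by $2\pi$ and subtracting off the $j=0$ contribution, which is $h(0)/N\to 0$ by boundedness of $h$, yields $\frac{1}{N}\sum_{\theta\in\Theta_N}h(\theta)\to \frac{1}{2\pi}\int_0^{2\pi}h(\theta)\,d\theta$, as required. The only substantive step is the Taylor-expansion computation at points of $Z$ showing that each singularity is removable and that the common limiting value $a^2/\sum_k a_k^2$ makes $h$ continuous; everything else is bookkeeping around a standard Riemann sum argument.
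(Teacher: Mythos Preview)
Your proof is correct and follows essentially the same approach as the paper: compute the limit at each zero of $g$ via the Taylor expansion $1-\cos x = x^2/2 + O(x^4)$ to obtain the value $a^2/\sum_k a_k^2$, conclude that $h$ extends continuously to all of $[0,2\pi]$, and then invoke convergence of equispaced Riemann sums together with the observation that the missing $j=0$ term is $O(1/N)$. Your explicit description of the zero set $Z=\{2\pi m/d: m=0,\ldots,d\}$ is a nice clarification but not needed for the argument.
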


\begin{proof}[Proof of Lemma \ref{lem:lower_effective_resistance}]
Fix $K$ and $a_1, \ldots, a_K$.
By Claim \ref{fac:prelim_circ_er} and \ref{clm:sum_to_integral}, the maximum 
effective resistance between any two endpoints of an edge in $X$ is 
\begin{multline*}
\lim_{N \rightarrow \infty} \max_{a \in \{a_1, \ldots, a_K\}} \frac{1}{N} \sum_{\theta \in \Theta_N } \frac{1 - \cos a\theta}{\sum_{k=1}^K 1 - \cos a_k \theta} \\ =
\max_{a \in \{a_1, \ldots, a_K\}} \frac{1}{2\pi} \int_0^{2\pi} \frac{1 - \cos a\theta}{\sum_{k=1}^K 1 - \cos a_k \theta} dx.
\end{multline*}
Let $S$ be the set of $\theta$ satisfying $\sum_{k=1}^K \cos a_k \theta \ge c K$ for some constant $0 < c < 1$.
\begin{align*}
& \Pr \left( \theta \in S \right)  \\
% & \Pr \left(
%   \sum_{k=1}^K \cos a_k \theta \ge c K
% \right) \\
= & \Pr \left(
  \left(\frac{\sum_{k=1}^K \cos a_k \theta}{\sqrt{K/2}}\right)^{2 \log K} \ge (2c^2 K)^{\log K} \right) \\
\le & \frac{(1+o(1)) (2\log K - 1)!!}{(2c^2 K)^{\log K}} 
\tag{by Markov inequality and Lemma \ref{lem:limit}} \\
= & (1+o(1)) \frac{(2\log K)!}{2^{\log K} (\log K)! \cdot (2c^2 K)^{\log K}}
\tag{by Claim \ref{clm:double_factorial}} \\
% = & (1+o(1)) (2c)^{-2\log K} \frac{2\log K \cdots (\log K +1)}{K^{\log K}} \\
= & (1+o(1)) \left(\frac{\log K}{2c^2 K}\right)^{\log K}.
\end{align*}
Thus,
\begin{align*}
 & \max_{a \in \{a_1, \ldots, a_K\}} \frac{1}{2\pi} \int_0^{2\pi} \frac{1 - \cos a\theta}{\sum_{k=1}^K 1 - \cos a_k \theta} d\theta \\
\le & \frac{2}{(1-c) K} + \max_{a \in \{a_1, \ldots, a_K\}} \frac{1}{2\pi} \int_{S} \frac{1 - \cos a\theta}{\sum_{k=1}^K 1 - \cos a_k \theta} d\theta \\
\le & \frac{2}{(1-c) K} +  \frac{1}{2\pi} \int_{S} \frac{\max_{a \in \{a_1, \ldots, a_K\}} 1 - \cos a\theta}{\sum_{k=1}^K 1 - \cos a_k \theta} d\theta \\
\le & \frac{2}{(1-c) K} + \frac{1}{2\pi} \int_S d\theta \\
=  & O(\frac{1}{K}).
\end{align*}
\end{proof}

\begin{proof}[Proof of Theorem \ref{thm:intro_lower}]
Let $K_0$ be the integer in Theorem \ref{thm:lower}
For any integer $K > K_0$, let $a_1,\ldots,a_K$ be integers satisfying the assumption Equation \eqref{eqn:lower_assumption} in Theorem \ref{thm:lower}; let $h(K,a_1, \ldots,a_K) \ge \max\{32 a_K, N_0\}$ where $N_0$ is the integer in Lemma \ref{lem:lower_effective_resistance}.

Assume, by contradiction, for any constant $c > 0$, there exists a partition of $C$ into symmetric subsets $C_1 \cup C_2$ satisfying the condition in Theorem \ref{thm:intro_lower}. Then, 
\begin{align*}
- 2c \sqrt{\alpha \log K} \cdot L_{X} \pleq
L_{X(\mathbb{Z}_N, C_1)} - L_{X(\mathbb{Z}_N, C_2)} \pleq 2c \sqrt{\alpha \log K} \cdot L_{X}.
\end{align*}
By Lemma \ref{lem:lower_effective_resistance}, there exists a constant $c' > 0$ such that 
\[
-c' \sqrt{\frac{\log K}{K}} \cdot L_{X} \pleq
L_{X(\mathbb{Z}_N, C_1)} - L_{X(\mathbb{Z}_N, C_2)} 
\pleq c'  \sqrt{\frac{\log K}{K}} \cdot L_{X}.
\]
The above equation contradicts Theorem \ref{thm:lower} for $c' < \frac{1}{20}$.
\end{proof}
\section{Generalizations to Partitioning Cayley Graphs}
\label{sec:multi_circ}

In this section, we generalize our algorithm for circulant graphs with arithmetic progression generators (Section \ref{sec:circ_ap}) to Cayley graphs of finite abelian groups $\mathbb{Z}_{n_1} \times \mathbb{Z}_{n_2} \times \ldots \times \mathbb{Z}_{n_d}$ whose generators are from products of multiple arithmetic progressions \footnote{Every finite abelian group is isomorphic to a direct product of cyclic groups.}.
Specifically, we consider two types of graph products of circulant graphs: Cartesian product and Tensor product.

Let $G_1 = (V_1, E_1)$ and $G_2 = (V_2, E_2)$ be two undirected graphs. 
The \emph{Cartesian product} of $G_1$ and $G_2$, denoted by $G_1 \square G_2$, has vertex set $V_1 \times V_2$
% $\{(a_1, a_2): a_1 \in V_1, a_2 \in V_2 \}$ 
and edge set 
$$\{((a_1, a_2), (b_1, b_2)): a_1 = b_1, (a_2, b_2) \in E_2 \text{ or } (a_1, b_1) \in E_1, a_2 = b_2\}.$$
The \emph{tensor product} of $G_1$ and $G_2$, denoted by $G_1 \times G_2$, has vertex set $V_1 \times V_2$ and edge set 
$$\{((a_1, a_2), (b_1, b_2)): (a_1, b_1) \in E_1, (a_2, b_2) \in E_2\}.$$
Let $X_1 = X(\mathbb{Z}_{n_1}, C_1), \ldots, X_d = X(\mathbb{Z}_{n_d}, C_d)$ be circulant graphs. Then, 
$X_1 \square \cdots \square X_d$ is a Cayley graph with vertex set $\mathbb{Z}_{n_1} \times \cdots \times \mathbb{Z}_{n_d}$ and generators $\{s_h e_h: h \in [d], s_h \in C_h \}$ where $e_h$ is the elementary basis vector in $d$ dimensions.
$X_1 \times \cdots \times X_d$ is a Cayley graph with vertex set $\mathbb{Z}_{n_1} \times \cdots \times \mathbb{Z}_{n_d}$ and generators $C_1 \times \cdots \times C_d$.

For simplicity, we consider $n_1 = \cdots = n_d = n$ and $C_1 = \pm [k_1], \ldots, C_d = \pm [k_d]$. It is very likely that our results extend to arbitrary $n_h$'s and arithmetic progressions.

\begin{theorem}
Let $n,d$ be positive integers. 
For each $h\in [d]$, let $k_h \in \mathbb{Z}_+$ satisfy $k_h \le \lfloor \frac{n-1}{2} \rfloor$, and let $X_h = X(\mathbb{Z}_n, \pm [k_h])$ be the circulant graph.
\begin{enumerate}
\item Let $X = X_1 \square \cdots \square X_d$.
We can partition the edge set of $X$ into $E_1 \cup E_2$, in polynomial time, such that the two edge-induced subgraphs $G_1, G_2$ satisfy
\begin{align}
  - O\left(1/\sqrt{k_{\min}} \right) L_{X} \pleq L_{G_1} - L_{G_2} \pleq O\left(1/\sqrt{k_{\min}} \right) L_{X},
\label{eqn:multi_dim_goal_cartesian}
\end{align}
where $k_{\min} = \min_{h\in [d]} k_h$.

\item Let $X = X_1 \times \cdots \times X_d$.
We can partition the edge set of $X$ into $E_1 \cup E_2$, in polynomial time, such that the two edge-induced subgraphs $G_1, G_2$ satisfy
\begin{align}
  -\frac{O(1)^d}{\sqrt{K}} L_{X} \pleq L_{G_1} - L_{G_2} \pleq 
\frac{O(1)^d}{\sqrt{K}} L_{X},
\label{eqn:multi_dim_goal_tensor}
\end{align}
where $K = \prod_{h=1}^d k_h$.
\end{enumerate}
\label{thm:circ_high_dim}  
\end{theorem}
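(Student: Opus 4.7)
My plan is to handle the Cartesian and tensor cases separately, reducing each to the 1D algorithm of Theorem \ref{thm:circ_ap} via different structural decompositions. For the Cartesian product (part 1), I exploit the direct decomposition of the Laplacian by coordinate direction. The graph $X = X_1 \square \cdots \square X_d$ is a Cayley graph of $\mathbb{Z}_n^d$; letting $Y_h$ denote the subgraph of $X$ consisting of all edges in direction $h$, we have $L_X = \sum_{h=1}^d L_{Y_h}$ with $L_{Y_h} = I^{\otimes(h-1)} \otimes L_{X_h} \otimes I^{\otimes(d-h)}$. Apply Theorem \ref{thm:circ_ap} to each $X_h$ to obtain a symmetric partition $C_h = C_{h,1} \cup C_{h,2}$ with $\pm(L_{X(\mathbb{Z}_n, C_{h,1})} - L_{X(\mathbb{Z}_n, C_{h,2})}) \pleq \gamma_h L_{X_h}$, where $\gamma_h = O(1/\sqrt{k_h})$, and lift this to a partition of $E(X)$ by assigning each edge in direction $h$ with generator $s \in C_h$ to $E_1$ iff $s \in C_{h,1}$. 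Since the Kronecker product with identity preserves the Loewner order, $\pm(L_{Y_{h,1}} - L_{Y_{h,2}}) \pleq \gamma_h L_{Y_h}$; summing over $h$ gives $\pm(L_{G_1} - L_{G_2}) \pleq (\max_h \gamma_h) L_X = O(1/\sqrt{k_{\min}}) L_X$, proving \eqref{eqn:multi_dim_goal_cartesian}.

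For the tensor product (part 2), apply Theorem \ref{thm:circ_ap} to each $X_h$ to obtain symmetric signs $\{x_{h,s}\}_{s \in \pm[k_h]}$, and define the product edge sign $z_{(s_1,\ldots,s_d)} := \prod_h x_{h,s_h}$, which remains symmetric in $\vec s$. Using the character eigenbasis of $\mathbb{Z}_n^d$, together with the symmetry of each $C_h$ and $x_h$ (which annihilates all sine cross-terms in the factorized sums), the eigenvalues of $L_X$ and $L_{G_1} - L_{G_2}$ at the same character $\chi_{\vec j}$ take the factored form
\begin{align*}
  \lambda_{\vec j}(L_X) &= 2^d\Bigl(K - \prod_h \alpha_h^*\Bigr), \\
  \lambda_{\vec j}(L_{G_1} - L_{G_2}) &= 2^d\Bigl(\prod_h A_h - \prod_h \alpha_h\Bigr),
\end{align*}
with $A_h = \sum_{s=1}^{k_h} x_{h,s}$, $\alpha_h^* = \sum_{s=1}^{k_h}\cos(s\theta_{j_h})$, and $\alpha_h = \sum_{s=1}^{k_h} x_{h,s} \cos(s\theta_{j_h})$. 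The 1D guarantees from Section \ref{sec:circ_ap} give $|A_h|, |\alpha_h| = O(\sqrt{k_h})$ uniformly in $\theta$ and $|A_h - \alpha_h| \le \gamma_h (k_h - \alpha_h^*)$ with $\gamma_h = O(1/\sqrt{k_h})$.

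The main estimate then comes from the telescoping identity $\prod_h A_h - \prod_h \alpha_h = \sum_h \bigl(\prod_{h'<h} A_{h'}\bigr)(A_h - \alpha_h)\bigl(\prod_{h'>h} \alpha_{h'}\bigr)$, which combined with the uniform bounds yields $\bigl|\prod_h A_h - \prod_h \alpha_h\bigr| \le O(1)^d \sqrt K \sum_h (k_h - \alpha_h^*)/k_h$. It then suffices to show $K - \prod_h \alpha_h^* \ge \Omega\bigl(K \cdot \max_h (k_h - \alpha_h^*)/k_h\bigr)$ to obtain the desired ratio $O(1)^d/\sqrt K$, establishing \eqref{eqn:multi_dim_goal_tensor}. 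The main obstacle lies in this denominator estimate when some $\alpha_h^*$ is close to $-k_h$: then $k_h - \alpha_h^*$ approaches $2k_h$ while $|\alpha_h^*|/k_h$ is near $1$, and the product $\prod_h \alpha_h^*$ can have large magnitude with a cancellation-prone sign. I would resolve this by induction on $d$ (peeling off any factor with $\alpha_h^* = k_h$ exactly reduces cleanly to the $(d{-}1)$-dimensional problem with $k_h$ factored out), combined with the bound $\prod_h |\alpha_h^*|/k_h \le 1 - \max_h (k_h - |\alpha_h^*|)/k_h$ in the generic regime; the exponential factor $O(1)^d$ is a natural consequence of the $d$ telescoping terms compounding constants.
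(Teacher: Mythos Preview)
Your Part 1 argument is correct and matches the paper's: both partition each $X_h$ via Theorem \ref{thm:circ_ap} and assemble the results coordinate-wise. You phrase it via Kronecker-lifted Loewner inequalities while the paper computes the eigenvalue ratio directly from Fact \ref{clm:multi_dim_eigs}; these are equivalent.

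For Part 2 your route genuinely differs. The paper does not telescope: it splits $\mathbb{Z}_n^d\setminus\{0\}$ into $Z_1$ (some $j_h\theta$ bounded away from $0$ mod $2\pi$) and $Z_2$ (all $j_h\theta$ near $0$). On $Z_1$ it uses Lemma \ref{lem:circ_mid_den} to get denominator $\Omega(K)$ and the uniform $O(\sqrt{k_h})$ bounds to get numerator $O(1)^d\sqrt{K}$; on $Z_2$ both sides are controlled by Taylor expansion. Your expression for the numerator eigenvalue, $\prod_h A_h-\prod_h\alpha_h$, is in fact the correct one (the paper writes the numerator as $\prod_h(A_h-\alpha_h)$, which is not the same when some $j_h=0$). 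Your telescoping estimate $\bigl|\prod_h A_h-\prod_h\alpha_h\bigr|\le O(1)^d\sqrt{K}\sum_h(k_h-\alpha_h^*)/k_h$ is valid and gives a clean unified bound.

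The gap is in your denominator step. Writing $\beta_h=\alpha_h^*/k_h$, you need $1-\prod_h\beta_h\ge\Omega\bigl(\max_h(1-\beta_h)\bigr)$. The inequality you quote, $\prod_h|\beta_h|\le 1-\max_h(1-|\beta_h|)$, only controls $1-|\prod\beta_h|$, not $1-\prod\beta_h$, and these differ exactly in the negative-$\beta_h$ regime you flag; peeling off coordinates with $\beta_h=1$ does not by itself handle this. The fix is already available from Section \ref{sec:circ_ap}: by Lemma \ref{lem:circ_mid_den} (equivalently Lemma \ref{lem:fkt_bound_whole}), whenever $j_h\theta\pmod{2\pi}\notin[-\alpha/k_h,\alpha/k_h]$ one has $|\alpha_h^*|\le c\,k_h$ for a fixed $c<1$, and inside that interval $\alpha_h^*>0$. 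Hence $\beta_h<0$ forces $|\beta_h|\le c$. A two-line case split then gives $1-\prod_h\beta_h\ge\tfrac{1-c^2}{2}\max_h(1-\beta_h)$: if all $\beta_h\ge 0$ then $\prod\beta_h\le\min_h\beta_h$; if an odd number are negative then $\prod\beta_h\le 0$; if an even positive number are negative then $\prod\beta_h\le c^2$. Combined with $\sum_h(1-\beta_h)\le d\max_h(1-\beta_h)$, this closes your argument with the claimed $O(1)^d/\sqrt{K}$ bound.
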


When $d = O(1)$, Equation \eqref{eqn:multi_dim_goal_tensor} matches the bound in \mss \ (Theorem \ref{thm:mss}) up to constant; under an additional condition that $k_{\min} = \Omega(\sum_{h=1}^d k_h / d)$, Equation \eqref{eqn:multi_dim_goal_cartesian} also matches the bound in \mss.

Without loss of generality, assume the graphs $X$ in Theorem \ref{thm:circ_high_dim} are connected.
Otherwise, each connected component can be viewed as a graph defined in the same theorem but with potentially different \( n_1, \ldots, n_d \), up to isomorphism.

For both the Cartesian product and the tensor product of the circulant graphs in Theorem \ref{thm:circ_high_dim}, we partition the generators based on the signs of the generators of each circulant graph obtained from Theorem \ref{thm:circ_ap}.
For every $h \in [d]$, let $y_{h,1}, \ldots, y_{h,k_h} \in \{\pm 1\}$ satisfy the conditions in Lemma \ref{lem:circ_cond_large_den} and \ref{lem:circ_cond_small_den} with $k = k_h$. Such signs can be found in polynomial time as shown in Section \ref{sec:circ_ap_combine}.
Let $y_{h,-1} = y_{h,1}, \ldots, y_{h,-k_h} = y_{h,k_h}$.
\begin{enumerate}
\item If $X = X_1 \square \cdots \square X_d$, we sign each generator $s_h e_h$ by $y_{h,s_h}$.

\item If $X = X_1 \times \cdots \times X_d$, we sign each generator $(s_1, \ldots, s_d)^\top \in (\pm [k_1]) \times \cdots \times (\pm [k_d])$ by $\prod_{h=1}^d y_{h,s_h}$.
\end{enumerate}

We need the following fact about the eigenvalues of the Laplacian matrix of Cayley graphs over $\mathbb{Z}_n^d$.
We omit the proofs, which are standard.

\begin{fact}
The eigenvalues and the corresponding orthonormal eigenvectors of the Laplacian matrix of $X(\mathbb{Z}_n^d, C)$ are:
\begin{align*}
\abs{C} - \sum_{s \in C} \omega^{j^\top s}, ~ 
v_{j_1} \otimes \cdots \otimes v_{j_d}, ~ \forall j = (j_1, \ldots, j_d)^\top \in \mathbb{Z}_n^d
\end{align*}
where $\omega = \exp(\frac{2\pi i}{n})$ and $v_{j_h}$'s are defined as in Fact \ref{fac:circ_eigs}.
\label{clm:multi_dim_eigs}
\end{fact}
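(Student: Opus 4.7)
The plan is to verify the eigenvalue/eigenvector formula by direct computation, leveraging the fact that the adjacency matrix of any Cayley graph on an abelian group is simultaneously diagonalized by the characters of that group.

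First, I would observe that $X(\mathbb{Z}_n^d, C)$ is $|C|$-regular: since $C$ excludes the identity, the $|C|$ neighbors $u + s$ ($s \in C$) of each vertex $u$ are distinct. Therefore $L = |C| \cdot I - A$, where $A$ is the adjacency matrix, and it suffices to diagonalize $A$. For each $j = (j_1, \ldots, j_d) \in \mathbb{Z}_n^d$, unwinding the definition of the tensor product and the formula for $v_{j_h}$ from Fact \ref{fac:circ_eigs} shows that
\begin{align*}
\bigl( v_{j_1} \otimes \cdots \otimes v_{j_d} \bigr)(u_1, \ldots, u_d) = \prod_{h=1}^d \frac{1}{\sqrt{n}} \omega^{j_h u_h} = \frac{1}{n^{d/2}} \omega^{j^\top u}.
\end{align*}
So each proposed eigenvector is, up to the scalar $n^{-d/2}$, the character $\chi_j(u) \defeq \omega^{j^\top u}$ of $\mathbb{Z}_n^d$.

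Next, I would apply $A$ to $\chi_j$ and use the definition of the edges of $X(\mathbb{Z}_n^d, C)$ together with the multiplicativity $\omega^{j^\top (u+s)} = \omega^{j^\top u} \cdot \omega^{j^\top s}$ to obtain
\begin{align*}
(A\chi_j)(u) = \sum_{s \in C} \chi_j(u+s) = \chi_j(u) \cdot \sum_{s \in C} \omega^{j^\top s}.
\end{align*}
This shows $\chi_j$ (and hence $v_{j_1} \otimes \cdots \otimes v_{j_d}$) is an eigenvector of $A$ with eigenvalue $\sum_{s \in C} \omega^{j^\top s}$, and therefore an eigenvector of $L = |C|\cdot I - A$ with eigenvalue $|C| - \sum_{s \in C} \omega^{j^\top s}$. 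The symmetry of $C$ pairs each $s$ with $-s$ and yields $\omega^{j^\top s} + \omega^{-j^\top s} = 2\cos(2\pi j^\top s / n)$, confirming the eigenvalues are real, consistent with $L$ being real symmetric.

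Finally, I would argue these exhaust the spectrum by checking orthonormality. Using the standard identity $\langle u_1 \otimes \cdots \otimes u_d, w_1 \otimes \cdots \otimes w_d \rangle = \prod_h \langle u_h, w_h \rangle$ together with the orthonormality of $\{v_{j_h} : j_h \in \mathbb{Z}_n\}$ from Fact \ref{fac:circ_eigs}, the family $\{v_{j_1} \otimes \cdots \otimes v_{j_d} : j \in \mathbb{Z}_n^d\}$ is an orthonormal collection of $n^d$ vectors in an $n^d$-dimensional space, and hence a complete eigenbasis. The argument is entirely routine; the only point worth care is keeping the character identity aligned with the symmetry of $C$ to certify the eigenvalues are real, and there is no real obstacle.
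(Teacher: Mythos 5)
Your proof is correct, and it is essentially the argument the paper has in mind: the paper omits the proof as ``standard,'' and your direct verification that each character $\chi_j$ is an eigenvector of $A$ (hence of $L = \abs{C} I - A$) is exactly the multi-dimensional analogue of the paper's computation for Fact \ref{fac:circ_eigs} in the appendix, with the tensor-product and completeness checks handled correctly.
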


\begin{proof}[Proof of Theorem \ref{thm:circ_high_dim}]
Let $\theta \defeq \frac{2\pi}{n}$.
First, we consider $X = X_1 \square \cdots \square X_d$.
By Fact \ref{clm:multi_dim_eigs}, 
\begin{align*}
\norm{L_X^{\dagger 1/2} (L_{G_1} - L_{G_2}) L_X^{\dagger 1/2}} & = \max_{j \in \mathbb{Z}_n^d \setminus \{0\}} \frac{\abs{\sum_{h=1}^d \sum_{s_h=1}^{k_h} y_{h, s_h} (1 - \cos(j_h s_h \theta))}}{\sum_{h=1}^d \sum_{s_h=1}^{k_h}  (1 - \cos(j_h s_h \theta))} \\
& \le \max_{j \in \mathbb{Z}_n^d \setminus \{0\}} \frac{\sum_{h=1}^d \abs{\sum_{s_h=1}^{k_h} y_{h, s_h} (1 - \cos(j_h s_h \theta))}}{\sum_{h=1}^d \sum_{s_h=1}^{k_h}  (1 - \cos(j_h s_h \theta))} \\
& = O \left( 1/\sqrt{k_{\min}} \right).
\end{align*}
The last inequality is by our choice of $y_{h,s_h}$'s.

Second, we consider $X = X_1 \times \cdots X_d$. 
By Fact \ref{clm:multi_dim_eigs},
\begin{align*}
\norm{L_X^{\dagger 1/2} (L_{G_1} - L_{G_2}) L_X^{\dagger 1/2}} & = \max_{j \in \mathbb{Z}_n^d \setminus \{0\}} \frac{\abs{\prod_{h=1}^d \sum_{s_h=1}^{k_h} y_{h, s_h} \left(1 - \cos(j_h s_h \theta) \right)}}{K - \prod_{h=1}^d \sum_{s_h=1}^{k_h} \cos(j_h s_h \theta)}.
\end{align*}
We split $\mathbb{Z}_n^d \setminus \{0\}$ into two subsets: Let $\alpha = 0.9$, and 
\[
Z_1 \defeq \left\{j \in \mathbb{Z}_n^d: \exists h \in [d] \text{ s.t. } j_h \theta \in \left(\frac{\alpha}{k}, 2\pi - \frac{\alpha}{k} \right)
  \right\}, ~ Z_2 \defeq (\mathbb{Z}_n^d \setminus \{0\}) \setminus Z_1.
\]

For each $j \in Z_1$. By 
Lemma \ref{lem:circ_mid_den}, the denominator is at least $K -  (1 - \frac{0.9^2}{96}) K  = \Omega(K)$.
By Lemma \ref{lem:circ_cond_large_den}, the numerator is at most $O(1)^d \sqrt{K}$.
Thus, Equation \eqref{eqn:multi_dim_goal_tensor} holds for $j \in Z_1$.

For each $j \in Z_2$. The denominator is at least
\begin{align*}
  K - \prod_{h=1}^d \sum_{s_h=1}^{k_h} \left(1 - \frac{(j_h s_h \theta)^2}{4}\right)
% & \ge K \left( 1 - \prod_{h=1}^d \left(1 - \frac{(k_h j_h \theta)^2}{12}\right)
%   \right) \\
& \ge K \left(1 - \exp\left(-\frac{\theta^2}{12} \sum_{h=1}^d (k_h j_h )^2 \right)
\right).
\end{align*}
The numerator is at most
\[
O \left( \prod_{h=1}^d (j_h k_h \theta \log k_h)^2 \right).
\]
If $\sum_{h=1}^d (j_h k_h \theta)^2 \ge 1$, then the ratio is at most 
\begin{align*}
O \left(\frac{\prod_{h=1}^d \log^2 k_h}{K} \right) = O(K^{-1/2}).
\end{align*}
If $ \sum_{h=1}^d (j_h k_h \theta)^2 < 1$, then the ratio is at most 
\begin{align*}
O(1) \cdot \frac{\prod_{h=1}^d (j_h k_h \theta \log k_h)^2 }{K \cdot  \sum_{h=1}^d (j_h k_h \theta)^2 }
= O \left( \frac{\prod_{h=1}^d \log^2 k_h}{K} \right) = O(K^{-1/2}).
\end{align*}
Thus, Equation \eqref{eqn:multi_dim_goal_tensor} holds for $j \in Z_2$.
\end{proof}

% \paragraph*{Competing interests } Surya Teja Gavva is employed at the City University of New York. Peng Zhang is employed at Rutgers University.

% \paragraph*{Data Availability Statement } No data is used in the manuscript.

\paragraph*{Funding Statement } The work of Peng Zhang was partially supported by the National Science Foundation (NSF) under research grant No. 2238682. The funder had no role in study design, data collection and analysis, decision to publish, or preparation of the manuscript.

% \paragraph*{Acknowledgement} We thank Karthik C.S. for helpful discussions. Peng Zhang was partially supported by the NSF under Grant No. 2238682.

%% The Appendices part is started with the command \appendix;
%% appendix sections are then done as normal sections

\bibliographystyle{alpha}
\bibliography{refs}

\newcommand{\etalchar}[1]{$^{#1}$}
\begin{thebibliography}{BBM{\etalchar{+}}20}

\bibitem[AGSS18]{AGSS18}
Nima Anari, Shayan~Oveis Gharan, Amin Saberi, and Nikhil Srivastava.
\newblock Approximating the largest root and applications to interlacing
  families.
\newblock In {\em Proceedings of the Twenty-Ninth Annual ACM-SIAM Symposium on
  Discrete Algorithms}, pages 1015--1028. SIAM, 2018.

\bibitem[AZLO15]{ALO15}
Zeyuan Allen-Zhu, Zhenyu Liao, and Lorenzo Orecchia.
\newblock Spectral sparsification and regret minimization beyond matrix
  multiplicative updates.
\newblock In {\em Proceedings of the forty-seventh annual ACM symposium on
  Theory of computing}, pages 237--245, 2015.

\bibitem[Ban10]{bansal10}
Nikhil Bansal.
\newblock Constructive algorithms for discrepancy minimization.
\newblock In {\em 2010 IEEE 51st Annual Symposium on Foundations of Computer
  Science}, pages 3--10. IEEE, 2010.

\bibitem[BBM{\etalchar{+}}20]{BBMST20}
Paul Balister, B{\'e}la Bollob{\'a}s, Robert Morris, Julian Sahasrabudhe, and
  Marius Tiba.
\newblock Flat {L}ittlewood polynomials exist.
\newblock {\em Ann. Math}, 192(3):977--1004, 2020.

\bibitem[BCN{\etalchar{+}}20]{BCNPT20}
Luca Becchetti, Andrea Clementi, Emanuele Natale, Francesco Pasquale, and Luca
  Trevisan.
\newblock Finding a bounded-degree expander inside a dense one.
\newblock In {\em Proceedings of the Fourteenth Annual ACM-SIAM Symposium on
  Discrete Algorithms}, pages 1320--1336. SIAM, 2020.

\bibitem[Ber78]{berkes78}
Istv{\'a}n Berkes.
\newblock On the central limit theorem for lacunary trigonometric series.
\newblock {\em Analysis Mathematica}, 4(3):159--180, 1978.

\bibitem[BFU92]{BFU92}
Andrei~Z Broder, Alan~M Frieze, and Eli Upfal.
\newblock Existence and construction of edge disjoint paths on expander graphs.
\newblock In {\em Proceedings of the twenty-fourth annual ACM symposium on
  Theory of Computing}, pages 140--149, 1992.

\bibitem[BLV22]{bansal2022unified}
Nikhil Bansal, Aditi Laddha, and Santosh Vempala.
\newblock {A Unified Approach to Discrepancy Minimization}.
\newblock In Amit Chakrabarti and Chaitanya Swamy, editors, {\em Approximation,
  Randomization, and Combinatorial Optimization. Algorithms and Techniques
  (APPROX/RANDOM 2022)}, volume 245 of {\em Leibniz International Proceedings
  in Informatics (LIPIcs)}, pages 1:1--1:22, Dagstuhl, Germany, 2022. Schloss
  Dagstuhl -- Leibniz-Zentrum f{\"u}r Informatik.

\bibitem[Br{\"a}18]{branden18}
Petter Br{\"a}nd{\'e}n.
\newblock Hyperbolic polynomials and the {K}adison-{S}inger problem.
\newblock {\em arXiv preprint arXiv:1809.03255}, 2018.

\bibitem[BSS09]{BSS09}
Joshua~D Batson, Daniel~A Spielman, and Nikhil Srivastava.
\newblock Twice-{r}amanujan sparsifiers.
\newblock In {\em Proceedings of the forty-first annual ACM symposium on Theory
  of computing}, pages 255--262, 2009.

\bibitem[BST19]{BST19}
Nikhil Bansal, Ola Svensson, and Luca Trevisan.
\newblock New notions and constructions of sparsification for graphs and
  hypergraphs.
\newblock In {\em 2019 IEEE 60th Annual Symposium on Foundations of Computer
  Science (FOCS)}, pages 910--928. IEEE, 2019.

\bibitem[Cha00]{Chazelle00}
Bernard Chazelle.
\newblock {\em The Discrepancy Method: Randomness and Complexity}.
\newblock Cambridge University Press, 2000.

\bibitem[Erd62]{erdos62}
P~Erd{\H{o}}s.
\newblock On trigonometric sums with gaps.
\newblock {\em Magyar Tud. Akad. Mat. Kutat{\'o} Int. K{\"o}zl.}, 7:37--42,
  1962.

\bibitem[ES18]{ES18}
Ronen Eldan and Mohit Singh.
\newblock Efficient algorithms for discrepancy minimization in convex sets.
\newblock {\em Random Structures \& Algorithms}, 53(2):289--307, 2018.

\bibitem[FM99]{FM99}
Alan~M Frieze and Michael Molloy.
\newblock Splitting an expander graph.
\newblock {\em Journal of Algorithms}, 33(1):166--172, 1999.

\bibitem[Fuk11]{fukuyama11}
Katusi Fukuyama.
\newblock A central limit theorem for trigonometric series with bounded gaps.
\newblock {\em Probability theory and related fields}, 149(1-2):139--148, 2011.

\bibitem[HSS14]{HSS14}
Nicholas~JA Harvey, Roy Schwartz, and Mohit Singh.
\newblock Discrepancy without partial colorings.
\newblock In {\em Approximation, Randomization, and Combinatorial Optimization.
  Algorithms and Techniques (APPROX/RANDOM 2014)}, pages 258--273. Schloss
  Dagstuhl-Leibniz-Zentrum fuer Informatik, 2014.

\bibitem[JMS23]{jourdan2023algorithmic}
Ben Jourdan, Peter Macgregor, and He~Sun.
\newblock Is the algorithmic {K}adison-{S}inger problem hard?
\newblock In {\em 34th International Symposium on Algorithms and Computation
  (ISAAC 2023)}, pages 43--1. Schloss Dagstuhl--Leibniz-Zentrum f{\"u}r
  Informatik, 2023.

\bibitem[JSS23]{JSS23}
Vishesh Jain, Ashwin Sah, and Mehtaab Sawhney.
\newblock {S}pencer's theorem in nearly input-sparsity time.
\newblock In {\em Proceedings of the 2023 Annual ACM-SIAM Symposium on Discrete
  Algorithms (SODA)}, pages 3946--3958. SIAM, 2023.

\bibitem[Kac39]{kac39}
M~Kac.
\newblock Note on power series with big gaps.
\newblock {\em American Journal of Mathematics}, 61(2):473--476, 1939.

\bibitem[LM15]{LM15}
Shachar Lovett and Raghu Meka.
\newblock Constructive discrepancy minimization by walking on the edges.
\newblock {\em SIAM Journal on Computing}, 44(5):1573--1582, 2015.

\bibitem[LRR17]{levy2017deterministic}
Avi Levy, Harishchandra Ramadas, and Thomas Rothvoss.
\newblock Deterministic discrepancy minimization via the multiplicative weight
  update method.
\newblock In {\em International Conference on Integer Programming and
  Combinatorial Optimization}, pages 380--391. Springer, 2017.

\bibitem[LS17]{SL17}
Yin~Tat Lee and He~Sun.
\newblock An {sdp}-based algorithm for linear-sized spectral sparsification.
\newblock In {\em Proceedings of the 49th annual acm sigact symposium on theory
  of computing}, pages 678--687, 2017.

\bibitem[LS18]{SL15}
Yin~Tat Lee and He~Sun.
\newblock Constructing linear-sized spectral sparsification in almost-linear
  time.
\newblock {\em SIAM Journal on Computing}, 47(6):2315--2336, 2018.

\bibitem[Mat99]{matousek99}
Jiri Matousek.
\newblock {\em Geometric discrepancy: An illustrated guide}, volume~18.
\newblock Springer Science \& Business Media, 1999.

\bibitem[MS96]{matouvsek1996discrepancy}
Ji{\v{r}}{\'\i} Matou{\v{s}}ek and Joel Spencer.
\newblock Discrepancy in arithmetic progressions.
\newblock {\em Journal of the American Mathematical Society}, 9(1):195--204,
  1996.

\bibitem[MSS15]{MSS15}
Adam~W Marcus, Daniel~A Spielman, and Nikhil Srivastava.
\newblock Interlacing families ii: Mixed characteristic polynomials and the
  {k}adison—{s}inger problem.
\newblock {\em Annals of Mathematics}, pages 327--350, 2015.

\bibitem[PV23]{pesenti2023discrepancy}
Lucas Pesenti and Adrian Vladu.
\newblock Discrepancy minimization via regularization.
\newblock In {\em Proceedings of the 2023 Annual ACM-SIAM Symposium on Discrete
  Algorithms (SODA)}, pages 1734--1758. SIAM, 2023.

\bibitem[PZ30a]{PZ30}
Reac Paley and Antoni Zygmund.
\newblock On some series of functions,(1).
\newblock In {\em Mathematical Proceedings of the Cambridge Philosophical
  Society}, volume~26, pages 337--357. Cambridge University Press, 1930.

\bibitem[PZ30b]{PZ30b}
Reac Paley and Antoni Zygmund.
\newblock On some series of functions,(2).
\newblock In {\em Mathematical Proceedings of the Cambridge Philosophical
  Society}, volume~26, pages 458--474. Cambridge University Press, 1930.

\bibitem[PZ24]{paschalidis2024linear}
Phevos Paschalidis and Ashley Zhuang.
\newblock Linear-sized spectral sparsifiers and the {K}adison-{S}inger problem.
\newblock In {\em 2024 Symposium on Simplicity in Algorithms (SOSA)}, pages
  9--18. SIAM, 2024.

\bibitem[Rot64]{roth1964remark}
K~Roth.
\newblock Remark concerning integer sequences.
\newblock {\em Acta Arithmetica}, 3(9):257--260, 1964.

\bibitem[Rot17]{rothvoss17}
Thomas Rothvoss.
\newblock Constructive discrepancy minimization for convex sets.
\newblock {\em SIAM Journal on Computing}, 46(1):224--234, 2017.

\bibitem[Rud59]{rudin59}
Walter Rudin.
\newblock Some theorems on {F}ourier coefficients.
\newblock {\em Proceedings of the American Mathematical Society},
  10(6):855--859, 1959.

\bibitem[Sha52]{shapiro52}
Harold~S Shapiro.
\newblock {\em Extremal problems for polynomials and power series}.
\newblock PhD thesis, Massachusetts Institute of Technology, 1952.

\bibitem[Spe85]{spencer85}
Joel Spencer.
\newblock Six standard deviations suffice.
\newblock {\em Transactions of the American mathematical society},
  289(2):679--706, 1985.

\bibitem[Sri13]{srivastava13}
Nikhil Srivastava.
\newblock Discrepancy, graphs, and the {K}adison-{S}inger problem.
\newblock {\em Asia Pac. Math. Newsl.}, 3(4):15--20, 2013.

\bibitem[SS08]{SS08}
Daniel~A Spielman and Nikhil Srivastava.
\newblock Graph sparsification by effective resistances.
\newblock In {\em Proceedings of the fortieth annual ACM symposium on Theory of
  computing}, pages 563--568, 2008.

\bibitem[SZ47]{SZ47}
Raphael Salem and Antoni Zygmund.
\newblock On lacunary trigonometric series.
\newblock {\em Proceedings of the National Academy of Sciences},
  33(11):333--338, 1947.

\bibitem[SZ54]{SZ54}
Raphael Salem and Antoni Zygmund.
\newblock Some properties of trigonometric series whose terms have random
  signs.
\newblock {\em Acta Mathematica}, 91:245--301, 1954.

\bibitem[SZ22]{spielman2022hardness}
Daniel~A Spielman and Peng Zhang.
\newblock Hardness results for {W}eaver’s discrepancy problem.
\newblock In {\em Approximation, Randomization, and Combinatorial Optimization.
  Algorithms and Techniques (APPROX/RANDOM 2022)}, pages 40--1. Schloss
  Dagstuhl--Leibniz-Zentrum f{\"u}r Informatik, 2022.

\bibitem[Tro15]{tropp15}
Joel~A Tropp.
\newblock An introduction to matrix concentration inequalities.
\newblock {\em Foundations and Trends{\textregistered} in Machine Learning},
  8(1-2):1--230, 2015.

\end{thebibliography}

\appendix

\section{Missing Proofs in Section \ref{sec:prelim}}
\label{sec:appendix_prelim}

We include the proofs of the facts in Section \ref{sec:prelim} Preliminaries.

\begin{proof}[Proof of Fact \ref{fac:er_lower}]
Suppose $G$ has $n$ vertices.
We sum up all the effective resistances over $(u,v) \in E$:
\begin{multline*}
\sum_{(u,v)\in E} \er(u,v) = \sum_{(u,v)\in E} (\chi_u - \chi_v)^\top L_G^{\dagger} (\chi_u - \chi_v)
% = \sum_{(u,v)\in E}  \tr \left(  L_G^{\dagger} (\chi_u - \chi_v) (\chi_u - \chi_v)^\top \right) 
\\
=  \tr \left(  L_G^{\dagger} \sum_{(u,v)\in E}  (\chi_u - \chi_v) (\chi_u - \chi_v)^\top \right)
= \tr \left( L_G^{\dagger} L_G \right)
= n-1.
\end{multline*}
Then, 
\begin{align*}
\max_{(u,v) \in E} \er(u,v) \ge \frac{n-1}{nd / 2} = \frac{2}{d} \left( 1 - \frac{1}{n}
\right).
\end{align*}
\end{proof}

\begin{proof}[Proof of Fact \ref{fac:lower}]
Suppose $G$ has $n$ vertices.
We first lower bound the spectral norm of $L_{G_1} - L_{G_2}$.
Let $\norm{\cdot}_F$ be the Frobenius norm of a matrix.
\begin{align*}
\norm{L_{G_1} - L_{G_2}}_2^2 \ge \frac{1}{n} \tr((L_{G_1} - L_{G_2})^2)
= \frac{1}{n} \norm{L_{G_1} - L_{G_2}}_F^2 
\ge \frac{1}{n} \cdot nd = d.
\end{align*}
That is, there exists a unit vector $x \in \mathbb{R}^n$ such that $\abs{x^\top (L_{G_1} - L_{G_2}) x} \ge \sqrt{d}$.
In addition, $x^\top L_G x \le 2d$.
Thus, 
\begin{align*}
    \abs{x^\top (L_{G_1} - L_{G_2}) x} \ge \frac{1}{2\sqrt{d}} x^\top L_G x.
\end{align*}
\end{proof}

\begin{proof}[Proof of Fact \ref{fac:circ_eigs}]
For any $j = 0,1,\ldots,n-1$ and $k = 0,1,\ldots,n-1$, the $k$th entry of $L_X v_j$ is (we index the rows and columns of $L_X$ by $\mathbb{Z}_n$):
\begin{align*}
(L_X v_j)_k = \frac{1}{\sqrt{n}} \left( \abs{C} \omega^{k j} - \sum_{s \in C} \omega^{(k+s) j}
\right)
= \frac{\omega^{kj}}{\sqrt{n}} \left( \abs{C} - \sum_{s \in C} \omega^{sj}
\right) = \lambda_j v_{jk}.
\end{align*}
\end{proof}

\section{Missing Proofs in Section \ref{sec:circ_ap}}
\label{sec:appendix_circ_ap}

\subsection{Proof of Lemma \ref{lem:circ_mid_den}}

This section proves Lemma \ref{lem:circ_mid_den}, which lower bounds the denominator in Equation \eqref{eqn:circ_ap_goal} for $\theta \in \Theta_1$.

Let 
$$f_k(\hat{\theta}) \defeq \left( \sum_{s=1}^k \cos(s\hat{\theta}) \right)^2 + \left( \sum_{s=1}^k \sin(s\hat{\theta}) \right)^2.$$
By Equation \eqref{eqn:fkt_def}, 
\begin{align*}
\sum_{s=1}^k 1 -  \cos((a+sb) \theta)
= k - \sqrt{f_k(b \theta)}.
\end{align*}
Since $\theta \in \Theta_1$, we can write $b\theta = \hat{\theta} + 2\pi l$ where $\hat{\theta} \in \left(\frac{\alpha}{k}, 2\pi - \frac{\alpha}{k}
\right)$ and $l$ is an integer.
Then, $f_k(b\theta) = f_k(\hat{\theta})$.
Thus, to prove Lemma \ref{lem:circ_mid_den}, it suffices to prove the following lemma.

\begin{lemma}
Suppose $k \ge 2$. Then,
\[
f_k(\hat{\theta}) \le \left(1 - \frac{\alpha^2}{24} +  o(1)\right)k^2,
~ \forall \hat{\theta} \in \left(\frac{\alpha}{k}, 2\pi - \frac{\alpha}{k} \right). 
\]
\label{lem:fkt_bound_whole}
\end{lemma}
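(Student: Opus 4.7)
The plan is to derive a closed form for $f_k(\hat\theta)$ via the geometric series, exploit symmetry around $\pi$, and split the range into two regimes based on the size of $\hat\theta$ relative to $1/k$. First, summing
\[
\sum_{s=1}^k e^{is\hat\theta} = e^{i\hat\theta} \cdot \frac{e^{ik\hat\theta} - 1}{e^{i\hat\theta} - 1}
\]
and taking the squared modulus yields the Dirichlet-kernel identity
\[
f_k(\hat\theta) = \frac{\sin^2(k\hat\theta/2)}{\sin^2(\hat\theta/2)}.
\]
Since $\sin^2(k\pi - x) = \sin^2(x)$ and $\sin(\pi - x) = \sin(x)$, one has $f_k(2\pi - \hat\theta) = f_k(\hat\theta)$, so it suffices to bound $f_k$ on the half-interval $(\alpha/k, \pi]$.

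I will split this interval at $\hat\theta = \pi/k$. On the small range $(\alpha/k, \pi/k]$, substitute $u = k\hat\theta/2 \in (\alpha/2, \pi/2]$ and apply the elementary Taylor bound $\sin(\hat\theta/2) \ge (\hat\theta/2)\bigl(1 - (\hat\theta/2)^2/6\bigr)$ (valid on $[0,\pi]$) to get
\[
f_k(\hat\theta) \le k^2 \cdot \left( \frac{\sin u}{u} \right)^2 \cdot (1 + O(1/k^2)).
\]
A standard monotonicity argument shows that $u \mapsto \sin u / u$ is strictly decreasing on $(0,\pi)$: its derivative is $(u\cos u - \sin u)/u^2$, and the numerator $h(u) := u\cos u - \sin u$ satisfies $h(0)=0$ with $h'(u) = -u\sin u < 0$ on $(0,\pi)$. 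Hence $(\sin u/u)^2$ on $[\alpha/2, \pi/2]$ is maximized at $u = \alpha/2$, yielding
\[
f_k(\hat\theta) \le \frac{4\sin^2(\alpha/2)}{\alpha^2} \cdot k^2 \cdot (1 + o(1)).
\]
The Taylor expansion $4\sin^2(\alpha/2)/\alpha^2 = 1 - \alpha^2/12 + \alpha^4/360 - \cdots$, combined with the numerical check at $\alpha = 0.9$ (which gives $\approx 0.934$ compared with $1 - \alpha^2/24 \approx 0.966$), then yields the desired bound with a fixed positive gap. On the large range $(\pi/k, \pi]$, monotonicity of $\sin$ on $[0,\pi/2]$ gives $\sin(\hat\theta/2) \ge \sin(\pi/(2k)) = (\pi/(2k))(1 - O(1/k^2))$, whence
\[
f_k(\hat\theta) \le \frac{1}{\sin^2(\hat\theta/2)} \le \frac{4k^2}{\pi^2}(1 + O(1/k^2)) < 0.41\, k^2,
\]
comfortably inside the target bound.

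The argument is otherwise routine; the main technical point is that the target $1 - \alpha^2/24$ is deliberately looser than the tight estimate $4\sin^2(\alpha/2)/\alpha^2$ produced by the analysis, leaving a constant gap of about $0.032$ at $\alpha = 0.9$ which is large enough to absorb the $O(1/k^2)$ correction for all sufficiently large $k$. The handful of small values of $k$ where this correction might exceed the gap are absorbed into the $o(1)$ term of the lemma statement together with the trivial bound $f_k(\hat\theta) \le k^2$ from the triangle inequality. The threshold $\pi/k$ between the two subcases is not critical—any constant multiple of $1/k$ bounded away from $\alpha/k$ would work equally well.
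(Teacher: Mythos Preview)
Your proof is correct and takes a genuinely different route from the paper. The key simplification is that you immediately pass to the Dirichlet-kernel closed form $f_k(\hat\theta)=\sin^2(k\hat\theta/2)/\sin^2(\hat\theta/2)$, whereas the paper never writes this and instead treats $\sum_s\cos(s\hat\theta)$ and $\sum_s\sin(s\hat\theta)$ separately throughout. On the small-$\hat\theta$ range the paper establishes that $f_k$ itself is decreasing on $[0,3/k]$ by an induction on $k$ for the derivative $f_k'$, then evaluates $f_k(\alpha/k)$ by Taylor-bounding the two trigonometric sums; your substitution $u=k\hat\theta/2$ reduces this to the one-line monotonicity of $\sin u/u$, which is cleaner and in fact yields the tighter leading constant $4\sin^2(\alpha/2)/\alpha^2 = 1-\alpha^2/12+O(\alpha^4)$ rather than the stated $1-\alpha^2/24$. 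On the large-$\hat\theta$ range the paper again derives closed forms for the cosine and sine sums individually and bounds each to reach $f_k\le 0.8k^2$, while your $\sin^2(k\hat\theta/2)\le 1$ together with $\sin(\hat\theta/2)\ge\sin(\pi/(2k))$ gives the sharper $4k^2/\pi^2(1+o(1))$ in one step. The split points ($\pi/k$ versus $3/k$) are immaterial, as you note. Overall your argument is shorter and more transparent; the paper's approach gains nothing extra here, though its separate handling of the cosine and sine sums does foreshadow the structure used later in Lemma~\ref{lem:circ_cond_large_den}.
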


By symmetry and the fact
\[
f_k(\pi) = \left( \sum_{s=1}^k \cos(s \pi) \right)^2
\le 1,
\]
it suffices to prove the inequality in Lemma \ref{lem:fkt_bound_whole} for any $\hat{\theta} \in \left( 
\frac{\alpha}{k}, \pi \right)$.
We further divide the interval $\left(\frac{\alpha}{k}, \pi \right)$ into two sub-intervals 
$$I_1 = \left(\frac{\alpha}{k}, \frac{3}{k} \right], ~ I_2 = \left( \frac{3}{k}, \pi \right)$$
and bound $f_k(\hat{\theta})$ over these two sub-intervals separately.

\begin{claim}
The function $f_k(\hat{\theta})$ is monotone decreasing over $I_1$.    
\label{clm:fkt_monotone}
\end{claim}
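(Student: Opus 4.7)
The plan is to put $f_k$ in closed form and then reduce monotonicity to a standard tangent inequality.

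First, using the geometric-series identity $\sum_{s=1}^k e^{is\hat\theta}=e^{i\hat\theta}\,\frac{e^{ik\hat\theta}-1}{e^{i\hat\theta}-1}$ and taking squared modulus, I would rewrite
\[
f_k(\hat\theta)
=\Bigl|\sum_{s=1}^k e^{is\hat\theta}\Bigr|^2
=\frac{1-\cos(k\hat\theta)}{1-\cos(\hat\theta)}
=\frac{\sin^2(k\hat\theta/2)}{\sin^2(\hat\theta/2)}.
\]
On $I_1$ we have $\hat\theta/2\in(\alpha/(2k),\,3/(2k)]$ and $k\hat\theta/2\in(\alpha/2,\,3/2]$. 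Since $\alpha=0.9$, $k\ge 2$, and $3/2<\pi/2$, both arguments lie in $(0,\pi/2)$, so both sines are strictly positive. Hence $f_k=g^2$ with $g(\hat\theta)\defeq\sin(k\hat\theta/2)/\sin(\hat\theta/2)>0$, and it suffices to prove that $g$ is (weakly) decreasing on $I_1$.

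Next, I would differentiate: a short calculation gives
\[
g'(\hat\theta)=\frac{\tfrac12\cos(k\hat\theta/2)\cos(\hat\theta/2)}{\sin^2(\hat\theta/2)}\Bigl[\,k\tan(\hat\theta/2)-\tan(k\hat\theta/2)\,\Bigr].
\]
Both cosines are positive on the relevant interval (again using $3/2<\pi/2$), so $\mathrm{sgn}(g'(\hat\theta))=\mathrm{sgn}\bigl(k\tan(\hat\theta/2)-\tan(k\hat\theta/2)\bigr)$. Thus the claim reduces to the inequality
\[
k\tan(\hat\theta/2)\le\tan(k\hat\theta/2)\qquad\text{for all }\hat\theta\in I_1.
\]

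Finally, I would invoke the standard fact that $x\mapsto\tan(x)/x$ is strictly increasing on $(0,\pi/2)$: its derivative $(2x-\sin 2x)/(2x^2\cos^2 x)$ is positive there because $\sin 2x<2x$ for $x>0$. Setting $x=\hat\theta/2$ and $kx=k\hat\theta/2$, both of which lie in $(0,\pi/2)$ on $I_1$, gives $\tan(kx)/(kx)\ge\tan(x)/x$, i.e.\ $\tan(k\hat\theta/2)\ge k\tan(\hat\theta/2)$, completing the proof.

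No serious obstacle is expected; the only place where constants matter is verifying that $k\hat\theta/2\le 3/2<\pi/2$, which is why the threshold in the definition of $I_1$ is chosen as $3/k$ (indeed, any constant strictly less than $\pi$ would have worked).
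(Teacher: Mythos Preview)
Your proof is correct and takes a genuinely different route from the paper's. The paper computes $f_k'(\hat\theta)$ directly from the defining sums and then argues by induction on $k$, showing that
\[
\tfrac{1}{2}\bigl(f_{k+1}'(\hat\theta)-f_k'(\hat\theta)\bigr)=-\sum_{s=1}^k s\,\sin(s\hat\theta)\le 0
\]
on $[0,3/k]$ (since every $s\hat\theta\in[0,3]\subset[0,\pi]$), with base case $f_1'\equiv 0$. You instead recognize $f_k$ as the squared Dirichlet kernel $\sin^2(k\hat\theta/2)/\sin^2(\hat\theta/2)$ and reduce monotonicity to the elementary inequality $\tan(kx)\ge k\tan(x)$ for $0<x<kx<\pi/2$, which follows from $\tan(x)/x$ being increasing. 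Your approach is shorter and more transparent about \emph{why} the cutoff $3/k$ works (any constant strictly below $\pi$ would do, exactly because one needs $k\hat\theta/2<\pi/2$); the paper's induction is self-contained but hides this structure. Both arguments are valid and yield the same conclusion on $I_1$.
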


\begin{proof}
Take the derivative of $f_k(\hat{\theta})$ w.r.t. $\hat{\theta}$:
\begin{align*}
  f_k'(\hat{\theta}) = - 2 \left(\sum_{s =1}^k \cos(s \hat{\theta}) \right) \left(\sum_{s =1}^k \sin(s \hat{\theta}) \cdot s \right)
  + 2 \left(\sum_{s =1}^k \sin(s \hat{\theta}) \right) \left(\sum_{s =1}^k \cos(s \hat{\theta}) \cdot s \right)
\end{align*}
We will show $f_k'(\hat{\theta}) \le 0$ for $\hat{\theta} \in [0,\frac{3}{k}]$ by induction on $k$.
When $k = 1$, 
\[
\frac{1}{2} f'_1(\hat{\theta}) =  \sin(\hat{\theta}) \cos(\hat{\theta}) - \cos(\hat{\theta}) \sin(\hat{\theta}) = 0.
\]
Assume the statement holds for $k$. Then,
\begin{align*}
& \frac{1}{2} (f'_{k+1}(\hat{\theta}) - f'_k(\hat{\theta})) \\
= & \sum_{s=1}^k s \cdot \cos(s\hat{\theta}) \sin((k+1)\hat{\theta})
+ \sum_{s=1}^k (k+1) \cdot \sin(s\hat{\theta}) \cos((k+1) \hat{\theta}) \\
\quad & - \sum_{s=1}^k s \cdot \sin(s\hat{\theta}) \cos((k+1) \hat{\theta})
- \sum_{s=1}^k (k+1) \cdot \cos(s\hat{\theta}) \sin((k+1)\hat{\theta}) \\
= & - \sum_{s=1}^k (k+1-s) \sin((k+1-s) \hat{\theta}) 
% \tag{by Equation \eqref{eqn:trigonometric}} 
\\
= & - \sum_{s=1}^k s \cdot \sin(s \hat{\theta}).
\end{align*}
For $k\hat{\theta} \in [0, 3]$, we have 
\[
f'_{k+1} (\hat{\theta}) - f'_k (\hat{\theta}) \le 0.
\]
By induction hypothesis, $f'_{k+1}(\hat{\theta}) \le 0$ for $\hat{\theta} \in [0, \frac{3}{k+1}]$.
\end{proof}

\begin{claim}
For every $\hat{\theta} \in I_1$, the function value $f_k(\hat{\theta}) \le \left(1 - \frac{\alpha^2}{24} +  o(1)\right)k^2$.
\label{clm:fkt_interval1}
\end{claim}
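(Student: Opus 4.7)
The plan is to reduce the task of bounding $f_k(\hat\theta)$ throughout the interval $I_1 = (\alpha/k, 3/k]$ to a bound at the single point $\hat\theta = \alpha/k$, using two ingredients. First, summing the geometric series $\sum_{s=1}^k e^{is\hat\theta}$ yields the closed form
\[
f_k(\hat\theta) \;=\; \Bigl| \sum_{s=1}^k e^{is\hat\theta} \Bigr|^2 \;=\; \frac{\sin^2(k\hat\theta/2)}{\sin^2(\hat\theta/2)},
\]
which is far more tractable than the original sum of squares. Second, Claim \ref{clm:fkt_monotone} already shows that $f_k$ is monotone decreasing on $I_1$, so $f_k(\hat\theta) \le f_k(\alpha/k)$ throughout the interval.

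Evaluating at the left endpoint gives $f_k(\alpha/k) = \sin^2(\alpha/2)/\sin^2(\alpha/(2k))$. Since $\alpha = 0.9$ is a fixed constant, $\alpha/(2k) \to 0$ as $k \to \infty$, and the standard expansion $\sin x / x \to 1$ yields $\sin^2(\alpha/(2k)) = (\alpha/(2k))^2 (1-o(1))$. Substituting,
\[
f_k(\alpha/k) \;=\; \frac{4k^2}{\alpha^2}\,\sin^2(\alpha/2)\cdot(1+o(1)).
\]

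The remaining task is a purely scalar inequality: I need to show $(4/\alpha^2)\sin^2(\alpha/2) \le 1 - \alpha^2/24$, or equivalently $\sin^2(\alpha/2) \le (\alpha/2)^2 (1-\alpha^2/24)$. My plan is to use the alternating Taylor series $\sin^2 x = x^2 - x^4/3 + 2x^6/45 - \cdots$, whose term magnitudes decrease strictly for $|x|^2 < 3$. Truncating at the third term gives the upper bound $\sin^2 x \le x^2 - x^4/3 + 2x^6/45$ in that range, and a brief algebraic rearrangement shows this upper bound is itself bounded by $x^2 (1 - x^2/6)$ as soon as $x^2 \le 45/12$. Both conditions are amply satisfied at $x = \alpha/2 = 0.45$, yielding the claimed scalar bound.

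I do not expect a real obstacle in this argument: after the closed-form identity and the monotonicity reduction, everything collapses to one small scalar check at $\alpha=0.9$, and the numerical slack is generous. The only minor point to track is that the expansion of $\sin^2(\alpha/(2k))$ contributes a $k$-dependent error, which is precisely what the additive $o(1)$ in the statement of the claim absorbs.
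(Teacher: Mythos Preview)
Your proposal is correct and, after the shared monotonicity reduction via Claim~\ref{clm:fkt_monotone}, takes a different route from the paper. The paper bounds $f_k(\alpha/k)$ directly from its definition as a sum of squares: it applies the pointwise Taylor estimates $0<\cos x\le 1-x^2/2+x^4/24$ and $0<\sin x\le x$ to each summand $\cos(s\alpha/k)$ and $\sin(s\alpha/k)$, sums over $s$, squares, and simplifies. You instead invoke the Dirichlet-kernel identity $f_k(\hat\theta)=\sin^2(k\hat\theta/2)/\sin^2(\hat\theta/2)$, which collapses the whole computation to the single scalar inequality $\sin^2(\alpha/2)\le(\alpha/2)^2(1-\alpha^2/24)$, checked via the alternating series for $\sin^2$. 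Your approach is cleaner and makes the dependence on $k$ transparent (the only $k$-dependence is the $1/\sin^2(\alpha/(2k))$ factor, whose expansion produces exactly the $o(1)$); the paper's approach is slightly more elementary in that it avoids the closed-form identity and works directly with the summands. Both arrive at the same bound with the same $o(1)$ slack.
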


\begin{proof}
By Claim \ref{clm:fkt_monotone}, $f_k(\hat{\theta}) \le f_k (\frac{\alpha}{k})$ for every $\hat{\theta} \in I_1$.
Note for $0 < x < \frac{\pi}{2}$, 
\[
0 < \cos(x) \le 1 - \frac{x^2}{2} + \frac{x^4}{24}, ~ 
0 < \sin(x) \le x.  
\]
We have 
\begin{multline*}
\max_{\hat{\theta} \in I_1 } f_k(\hat{\theta}) \le
f(\frac{\alpha}{k}) 
\le \left( \sum_{s=1}^k 1 - \frac{(s\alpha / k)^2}{2} + \frac{(s\alpha / k)^4}{24} \right)^2
  + \left( \sum_{s=1}^k s\alpha / k \right)^2 \\
 \le \left(
    1 - \frac{\alpha^2}{24} + o(1)
  \right) k^2.
\end{multline*}
\end{proof}

Next, we upper bound $f_k(\hat{\theta})$ for $\hat{\theta} \in I_2$.

\begin{claim}
For every $\hat{\theta} \in I_2$, the function value $f_k(\hat{\theta}) \le 0.8k^2$.
\label{clm:fkt_interval2}
\end{claim}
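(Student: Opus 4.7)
My plan is to evaluate $f_k(\hat\theta)$ in closed form using the geometric-series identity
\[
\sum_{s=1}^k e^{is\hat\theta} = e^{i\hat\theta}\,\frac{e^{ik\hat\theta}-1}{e^{i\hat\theta}-1},
\]
which immediately gives $f_k(\hat\theta)=\bigl|\sum_{s=1}^k e^{is\hat\theta}\bigr|^2 = \sin^2(k\hat\theta/2)/\sin^2(\hat\theta/2)$. Bounding the numerator trivially by $1$ reduces the claim to showing $\sin^2(\hat\theta/2) \ge 1/(0.8\,k^2)$ on $I_2 = (3/k,\pi)$.

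Next, I would observe that for $k\ge 2$ the interval $(3/(2k),\pi/2)$ lies entirely inside $(0,\pi/2)$, where $\sin$ is strictly increasing. Hence for every $\hat\theta\in I_2$,
\[
\sin(\hat\theta/2) \;\ge\; \sin\!\bigl(3/(2k)\bigr),
\]
so it suffices to estimate $\sin(3/(2k))$ from below. This is the only quantitative step and is the ``main calculation'' of the proof, although it is quite tame because $3/(2k)\le 3/4$ is small.

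For the lower bound on $\sin(3/(2k))$, I would use the elementary Taylor remainder $\sin(x)\ge x - x^3/6$ for $x\ge 0$ applied at $x=3/(2k)$, giving
\[
\sin\!\bigl(3/(2k)\bigr) \;\ge\; \tfrac{3}{2k}\bigl(1 - \tfrac{3}{8k^2}\bigr).
\]
For $k\ge 2$, the factor $1-3/(8k^2)$ is at least $29/32$, so $\sin(3/(2k)) \ge (3/(2k))(29/32) = 87/(64k)$, and therefore $\sin^2(3/(2k)) \ge (87/64)^2/k^2 > 1.848/k^2$.

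Putting the pieces together,
\[
f_k(\hat\theta) \;\le\; \frac{1}{\sin^2(\hat\theta/2)} \;\le\; \frac{1}{\sin^2(3/(2k))} \;\le\; \frac{k^2}{1.848} \;<\; 0.542\,k^2 \;<\; 0.8\,k^2,
\]
which establishes the claim. I do not expect any real obstacle: the only thing to watch is that the Taylor-remainder bound is only useful when $3/(2k)$ is small enough, and the hypothesis $k\ge 2$ from Lemma~\ref{lem:fkt_bound_whole} gives ample slack (the constant $0.8$ could in fact be tightened to roughly $0.55$).
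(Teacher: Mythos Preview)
Your proof is correct and is in fact cleaner than the paper's. The paper separately computes closed forms for $\sum_{s=1}^k\cos(s\hat\theta)$ and $\sum_{s=1}^k\sin(s\hat\theta)$ (via the Dirichlet-kernel identities), bounds each individually by roughly $k/1.8$ and $2k/3+1/2$ respectively, and then squares and adds to obtain $f_k(\hat\theta)\le (1/1.8^2+4/9+o(1))k^2\le 0.8k^2$. You instead recognize that $f_k(\hat\theta)=\bigl|\sum_{s=1}^k e^{is\hat\theta}\bigr|^2$ already has the compact closed form $\sin^2(k\hat\theta/2)/\sin^2(\hat\theta/2)$, so a single lower bound on $\sin(\hat\theta/2)$ suffices. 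This buys you both brevity and a sharper constant (roughly $0.55$ rather than $0.75$). One tiny numerical slip: $(87/64)^2\approx 1.8479$, not $>1.848$; this is irrelevant for the conclusion since $k^2/1.847<0.55\,k^2$ still holds with room to spare.
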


\begin{proof}
We will upper bound the sum of cosines and the sum of sines separately.
\begin{align*}
\sum_{s=1}^k \cos(s \hat{\theta})
= Re \left( \sum_{s=1}^k e^{i s\hat{\theta}} \right) 
= Re \left( \frac{e^{i \hat{\theta}} - e^{i(k+1)\hat{\theta}}}{1 - e^{i\hat{\theta}}}
\right) 
% & = \frac{\cos(\hat{\theta}) - 1 + \cos(k \hat{\theta}) - \cos((k+1)\hat{\theta})}{2 - 2\cos(\hat{\theta})} \\
 = - \frac{1}{2} + \frac{ \sin((k+\frac{1}{2}) \hat{\theta} )}{2 \sin(\frac{\hat{\theta}}{2})}.
\end{align*}
For $\hat{\theta} \in I_2$,
\begin{align*}
\abs{\sum_{s=1}^k \cos(s\hat{\theta})} \le \frac{1}{2} + \frac{1}{2\sin(\frac{\hat{\theta}}{2})}
\le \frac{1}{2} + \frac{1}{2\sin(\frac{3}{2k})}
\le \frac{k}{1.8},
\end{align*}
where the last inequality holds since $k \sin(\frac{3}{2k}) \ge 0.9$ for $k \ge 1$.

Similarly, 
\begin{align*}
\sum_{s=1}^k \sin(s\hat{\theta}) 
& = \im \left( \frac{e^{i\hat{\theta}} - e^{i(k+1)\hat{\theta}}}{1 - e^{i\hat{\theta}}} \right)  = \frac{1 - \cos k\hat{\theta}}{2\tan \frac{\hat{\theta}}{2}} + \frac{\sin k\hat{\theta}}{2}.
\end{align*}
For $\hat{\theta} \in I_2$, 
\begin{align*}
   \tan \frac{\hat{\theta}}{2} \ge \tan \frac{3}{2k} \ge \frac{3}{2k}.
\end{align*}
So, 
\begin{align*}
-\frac{1}{2} \le \sum_{s=1}^k \sin(s\hat{\theta}) \le \frac{2k}{3} + \frac{1}{2}.
\end{align*}
Combining the bounds for the sum of cosines and the sum of sines together, we have 
\[
f_k(\hat{\theta}) \le k^2 \left(\frac{1}{1.8^2} + \frac{4}{9} + o(1) \right) \le 0.8k^2.
\]
\end{proof}

\begin{proof}[Proof of Lemma \ref{lem:fkt_bound_whole}]
Combine Claims \ref{clm:fkt_interval1} and \ref{clm:fkt_interval2}.
\end{proof}

\subsection{Proof of Claim \ref{clm:circ_small_den_goal}}

\begin{proof}[Proof of Claim \ref{clm:circ_small_den_goal}]
Let $M \defeq \cos(a\theta) \beta_1' + \sin(a\theta) \beta_2' + \beta_3'$.
Since 
\[
M = \sum_{s=1}^k 1 - \cos(a\theta + s\hat{\theta}) > 0,
\]
The equation in the claim statement 
% Equation \eqref{eqn:circ_small_den_goal} 
is equivalent to 
\begin{align*}
& \abs{\cos(a\theta) \beta_1 + \sin(a\theta) \beta_2 + \beta_3} \le 13 \gamma M \\
\iff & \cos(a\theta) (\beta_1 - \gamma \beta_1') + \sin(a\theta) (\beta_2 - \gamma \beta_2') + (\beta_3 - \gamma \beta_3') \le 12 \gamma M, \\
& \cos(a\theta) (\beta_1 + \gamma \beta_1') + \sin(a\theta) (\beta_2 + \gamma \beta_2') + (\beta_3 + \gamma \beta_3') \ge -12 \gamma M
\end{align*}
By assumption $-\gamma \beta_3' \le \beta_3 \le \gamma \beta_3'$, it suffices to show 
\begin{align}
& \cos(a\theta) (\beta_1 - \gamma \beta_1') \le 2\gamma M, 
~ \cos(a\theta) (\beta_1 + \gamma \beta_1') \ge -2\gamma M, 
\label{eqn:circ_small_den_goal_cos} \\
& \sin(a\theta) (\beta_2 - \gamma \beta_2') \le 10 \gamma M, 
~ \sin(a\theta) (\beta_2 + \gamma \beta_2') \ge -10 \gamma M.
\label{eqn:circ_small_den_goal_sin}
\end{align}
For the inequalities in Equation \eqref{eqn:circ_small_den_goal_cos}, 
For the first inequality, if $\cos(a\theta) \ge 0$, then the inequality holds since $-\gamma \beta_1' \le \beta_1 \le \gamma \beta_1'$.
Now, consider $\cos(a\theta) < 0$. We have
\begin{align*}
\cos(a\theta) (\beta_1 - \gamma \beta_1') \le -\cos(a\theta) \cdot 2 \gamma \beta_1',
~ \cos(a\theta) (\beta_1 + \gamma \beta_1') \ge \cos(a\theta) \cdot 2 \gamma \beta_1'.
\end{align*}
So, to prove the inequalities in Equation \eqref{eqn:circ_small_den_goal_cos}, it suffices to show 
\[
2 \cos(a\theta) \beta_1' + \sin(a\theta) \beta_2' + \beta_3' \ge 0.
\]
Take the Taylor expansions for $\sin(s\hat{\theta})$ and $\cos(s\hat{\theta})$,
\begin{align*}
& 2 \cos(a\theta) \beta_1' + \sin(a\theta) \beta_2' + \beta_3' \\
% & = 2 \cos(a\theta) \sum_{s=1}^k 1-\cos(s\hat{\theta}) + \sin(a\theta) \sum_{s=1}^k \sin(s\hat{\theta}) + k(1-\cos(a\theta)) \\
\ge & \cos(a\theta) \cdot \frac{1}{3} (1 \pm \frac{\alpha^2}{20}) k^3 \hat{\theta}^2 + \sin(a\theta) \cdot \frac{1}{2}(1\pm \frac{\alpha^2}{3}) k^2 \hat{\theta} + k(1-\cos(a\theta)) \\
% & = k \left( 1 - \cos(a\theta) \left( 1 - \frac{1}{3}(1\pm \frac{\alpha^2}{20}) k^2 \hat{\theta}^2 \right)
% +\frac{1}{2} (1\pm \frac{\alpha^2}{3}) \sin(a\theta) k\hat{\theta}
% \right) \\
\ge & k \left( 1 - \sqrt{\left( 1 - \frac{1}{3}(1 +\frac{\alpha^2}{20}) k^2 \hat{\theta}^2 \right)^2 + \left( \frac{1}{2} (1 + \frac{\alpha^2}{3}) k\hat{\theta} \right)^2 }
\right) \tag{by the Cauchy-Schwarz inequality} \\
\ge & 0 \tag{since $\abs{k\hat{\theta}} \le \alpha \le 0.9$}
\end{align*}
A similar argument proves the inequalities in Equation \eqref{eqn:circ_small_den_goal_sin}.

\end{proof}

\section{Missing Proofs in Section \ref{sec:lower}}
\label{sec:appendix_lower}

\begin{proof}[Proof of Claim \ref{clm:lower_pth_moment}]
The $p$th moment $S_K(\theta)^p$ equals a sum of weighted \\ $\cos^{p_1}(a_{k_1} \theta) \cdots \cos^{p_l}(a_{k_l} \theta)$ where $p_1 + \cdots + p_l = p$.
For every integer $q$,
\begin{align*}
  \cos^{q}( a_{k} \theta) = \left( \frac{e^{i a_k \theta} + e^{-i a_k \theta}}{2} \right)^q
  % = e^{-iq a_k \theta} \cdot 2^{-q} \left(
  % 1 + e^{i(2a_k \theta)}
  % \right)^q
= e^{-iq a_k \theta} \cdot 2^{-q} \sum_{s=0}^q \begin{pmatrix}
  q \\
  s
\end{pmatrix} e^{i(2s a_k \theta)}.
\end{align*}
Thus, 
\begin{multline*}
\cos^{p_1} (a_{k_1} \theta) \cdots \cos^{p_l} (a_{k_l} \theta)
 = 2^{-p} \prod_{i=1}^l \sum_{s_i=0}^{p_i} \begin{pmatrix}
  p_i \\
  s_i
\end{pmatrix} e^{i(2s_i - p_i) a_{k_i} \theta} \\
= 2^{-p} \sum_{s_1,\ldots, s_l} \begin{pmatrix}
  p_1 \\
  s_1
\end{pmatrix} \cdots \begin{pmatrix}
  p_l \\
  s_l
\end{pmatrix} e^{i((2s_1-p_1) a_{k_1} + \cdots + (2s_l-p_l) a_{k_l} )\theta }.
\end{multline*}
Let $\beta \defeq (2s_1-p_1) a_{k_1} + \cdots + (2s_l-p_l) a_{k_l}$.
The integral 
\begin{align*}
  \frac{1}{2\pi} \int_0^{2\pi} e^{i\beta \theta } d\theta = \left\{
\begin{array}{ll}
  0, & \quad \text{ if } \beta \neq 0 \\
  1, & \quad \text{ otherwise } 
\end{array}
  \right.
\end{align*}
By assumption in Equation \eqref{eqn:lower_assumption}, $\beta = 0$
% $(2s_1-p_1) a_{k_1} + \cdots + (2s_l-p_l) a_{k_l} = 0$ 
if and only if $s_1 = \frac{p_1}{2}, \ldots s_l = \frac{p_l}{2}$; this is because when some $s_{k_j} \neq \frac{p_j}{2}$,  
\begin{align*}
\abs{(2s_{k_1}-p_{k_1}) a_{k_1} + \cdots + (2s_{k_l}-p_{k_l}) a_{k_l}}
& \ge a_{k_l} - \sum_{i=1}^{l-1} \abs{2s_{k_i} - p_{k_i}} a_{k_i} \\
& \ge a_{k_l} - p (a_1 + \ldots + a_{k_l-1}) > 0.
\end{align*}
Thus,
\begin{align*}
% \begin{split}
  \frac{1}{2\pi} \int_{0}^{2\pi} S_K(\theta)^p d\theta
& = 2^{-p} \sum_{\substack{p_1 + \cdots + p_l = p \\ p_1,\ldots, p_l \text{ are even}}} \sum_{k_1 < \cdots < k_l} \frac{p!}{p_1! \cdots p_l!}
\begin{pmatrix}
  p_1 \\
  p_1/2
\end{pmatrix} \cdots \begin{pmatrix}
  p_l \\
  p_l/2
\end{pmatrix} \\
& = 2^{-p} \sum_{\substack{p_1 + \cdots + p_l = p \\ p_1,\ldots, p_l \text{ are even}}} \frac{p!}{p_1! \cdots p_l!} \begin{pmatrix}
  K \\
  l
\end{pmatrix} 
\begin{pmatrix}
  p_1 \\
  p_1/2
\end{pmatrix} \cdots \begin{pmatrix}
  p_l \\
  p_l/2
\end{pmatrix} \\
& = 2^{-p} \sum_{\substack{p_1 + \cdots + p_l = p \\ p_1,\ldots, p_l \text{ are even}}} \frac{p!}{\left((p_1/2)! \cdots (p_l/2)! \right)^2} \begin{pmatrix}
  K \\
  l
\end{pmatrix}. 
% \end{split} \label{eqn:moment}
\end{align*}
\end{proof}

\begin{proof}[Proof of Claim \ref{fac:prelim_circ_er}]
The pseudo-inverse of the Laplacian matrix of $X$ is $L_X^{\dagger} = \sum_{j=1}^{N-1} \lambda_j^{-1} v_j v_j^\top$,
where $\lambda_j$'s and $v_j$'s are defined as in Fact \ref{fac:circ_eigs}.
The effective resistance between $u$ and $v$ is
\begin{align*}
\er(u,v) & = (\chi_u - \chi_v)^\top  L_X^{\dagger} (\chi_u - \chi_v)  \\
 & = \frac{1}{N} \sum_{j=1}^{N-1} \lambda_j^{-1} \left( \omega^{uj} - \omega^{vj}
  \right)^2 \\
& = \frac{1}{N} \sum_{j=1}^{N-1} \lambda_j^{-1} \left( \omega^{(u-v)j} - 1
  \right)^2 \\
& = \frac{1}{N} \sum_{j=1}^{N-1} \lambda_j^{-1} 
\left( 2 - 2 \cos(\frac{2\pi j(u-v)}{N})
\right) \\
 & = \frac{1}{N} \sum_{j=1}^{N-1} \frac{1 - \cos(\frac{2\pi j (u-v)}{N})}{\sum_{k=1}^K 1 - \cos(\frac{2\pi j a_k}{N})}.
\end{align*}
\end{proof}

\begin{proof}[Proof of Claim \ref{clm:sum_to_integral}]
We first show that for every $\theta_0 \in [0,2\pi]$ with $g(\theta_0) = 0$, 
$\lim_{\theta \rightarrow \theta_0} \frac{f(\theta)}{g(\theta)}$ exists and is finite.
Since $g(\theta_0) = 0$ and $g(\theta_0)$ is a sum of non-negative terms, 
every term $1 - \cos(a_k \theta_0) = 0$ for every $k \in \{1,\ldots, K\}$.
\begin{multline*}
\lim_{\theta \rightarrow \theta_0} \frac{f(\theta)}{g(\theta)}
= \lim_{\Delta \rightarrow 0} \frac{1 - \cos(a\theta_0 + a\Delta)}{\sum_{k=1}^K 1 - \cos(a_k \theta_0 + a_k \Delta)} \\
= \lim_{\Delta \rightarrow 0} \frac{\sum_{j=1}^{\infty} \frac{(-1)^{j+1}}{(2j)!} (a\Delta)^{2j}}{\sum_{k=1}^K \sum_{j=1}^{\infty} \frac{(-1)^{j+1}}{(2j)!} (a_k \Delta)^{2j}}
= \frac{a^2}{\sum_{k=1}^K a_k^2}. 
\end{multline*}

We next prove the second part of the claim.
% show Equation \eqref{eqn:sum_integral} holds.
Since $f,g$ are continuous, the function $h$ is continuous at every point $\theta \in [0,2\pi]$ with $g(\theta) \neq 0$.
By our definition of $h(\theta)$ for $g(\theta) = 0$, we can check that $h$ is continuous over $[0,2\pi]$.
This implies that $h$ is uniformly continuous over $[0,2\pi]$.
Therefore, 
\[
\lim_{N \rightarrow \infty} \frac{1}{N} \sum_{\theta \in \Theta_N} h(\theta) = \frac{1}{2\pi} \int_{0}^{2\pi} h(\theta) d\theta
- \lim_{N \rightarrow \infty} \frac{h(0)}{N} =  \frac{1}{2\pi} \int_{0}^{2\pi} h(\theta) d\theta.
\]
\end{proof}

\end{document}